\newtheorem*{theo}{Theorem}
\newtheorem{theorem}{Theorem}[section]
\newtheorem{lemma}[theorem]{Lemma}
\newtheorem{corollary}[theorem]{Corollary}
\theoremstyle{remark}
\newtheorem{rk}[theorem]{Remark}
\newtheorem{example}[theorem]{Example}
\newcommand{\quot}{\ensuremath{/ \hspace{-1.2mm}/}}
\def\Ad{\mathop{\rm Ad}\nolimits}
\def\ad{\mathop{\rm ad}\nolimits}
\def\Int{\mathop{\rm Int}\nolimits}
\def\Aut{\mathop{\rm Aut}\nolimits}
\def\Lie{\mathop{\rm Lie}\nolimits}
\def\GL{\mathop{\rm GL}\nolimits}
\def\SL{\mathop{\rm SL}\nolimits}
\def\Sp{\mathop{\rm Sp}\nolimits}
\def\charac{\mathop{\rm char}\nolimits}
\def\dim{\mathop{\rm dim}\nolimits}
\def\Hom{\mathop{\rm Hom}\nolimits}
\def\ker{\mathop{\rm ker}\nolimits}
\def\End{\mathop{\rm End}\nolimits}
\def\rank{\mathop{\rm rk}\nolimits}
\def\rank{\mathop{\rm rk}\nolimits}
\def\Spin{\mathop{\rm Spin}\nolimits}
\def\tildeAneventwoDynk{\put(0,0){\circle{4}} \put(0,2){\line(1,0){40}}
\put(0,-2){\line(1,0){40}} \put(15,-5){\line(2,1){10}}
\put(15,5){\line(2,-1){10}} \put(40,0){\circle{4}}
\multiput(42,0)(8,0){10}{\line(1,0){4}} \put(120,0){\circle{4}}
\put(120,2){\line(1,0){40}} \put(120,-2){\line(1,0){40}}
\put(135,5){\line(2,-1){10}} \put(135,-5){\line(2,1){10}}
\put(160,0){\circle{4}}
\put(0,-10){\makebox(0,0){$\scriptstyle{1}$}}
\put(40,-10){\makebox(0,0){$\scriptstyle{2}$}}
\put(120,-10){\makebox(0,0){$\scriptstyle{2}$}}
\put(160,-10){\makebox(0,0){$\scriptstyle{2}$}}}
\def\tildeAtwotwoDynk{\put(0,0){\circle{6}} \put(3,1){\line(1,0){34}} \put(3,-1){\line(1,0){34}}
\put(0,3){\line(1,0){40}} \put(0,-3){\line(1,0){40}}
\put(15,5){\line(2,-1){10}} \put(15,-5){\line(2,1){10}}
\put(40,0){\circle{6}}
\put(0,-10){\makebox(0,0){$\scriptstyle{1}$}}
\put(40,-10){\makebox(0,0){$\scriptstyle{2}$}} }
\def\tildeAnoddtwoDynk{\put(0,0){\circle{4}} \put(0,2){\line(1,0){40}}
\put(0,-2){\line(1,0){40}} \put(15,-5){\line(2,1){10}}
\put(15,5){\line(2,-1){10}} \put(40,0){\circle{4}}
\multiput(42,0)(8,0){10}{\line(1,0){4}} \put(120,0){\circle{4}}
\put(121.5,1.5){\line(1,1){27}} \put(150,30){\circle{4}}
\put(121.5,-1.5){\line(1,-1){27}} \put(150,-30){\circle{4}}
\put(0,-10){\makebox(0,0){$\scriptstyle{1}$}}
\put(40,-10){\makebox(0,0){$\scriptstyle{2}$}}
\put(135,0){\makebox(0,0){$\scriptstyle{2}$}}
\put(165,30){\makebox(0,0){$\scriptstyle{1}$}}
\put(160,-30){\makebox(0,0){$\scriptstyle{1}$}}}
\def\tildeDntwoDynk{\put(0,0){\circle{4}} \put(0,2){\line(1,0){40}}
\put(0,-2){\line(1,0){40}} \put(15,-5){\line(2,1){10}}
\put(15,5){\line(2,-1){10}} \put(40,0){\circle{4}}
\multiput(42,0)(8,0){10}{\line(1,0){4}} \put(120,0){\circle{4}}
\put(120,2){\line(1,0){40}} \put(120,-2){\line(1,0){40}}
\put(135,0){\line(2,-1){10}} \put(135,0){\line(2,1){10}}
\put(160,0){\circle{4}}
\put(0,-10){\makebox(0,0){$\scriptstyle{1}$}}
\put(40,-10){\makebox(0,0){$\scriptstyle{1}$}}
\put(120,-10){\makebox(0,0){$\scriptstyle{1}$}}
\put(160,-10){\makebox(0,0){$\scriptstyle{1}$}}}
\def\DfourthreeDynk{\put(0,0){\circle{4}} \put(2,0){\line(1,0){36}}
\put(0,2){\line(1,0){40}} \put(0,-2){\line(1,0){40}}
\put(15,0){\line(2,-1){10}} \put(15,-0){\line(2,1){10}}
\put(40,0){\circle{4}}
\put(0,-10){\makebox(0,0){$\scriptstyle{2}$}}
\put(40,-10){\makebox(0,0){$\scriptstyle{1}$}} }
\def\tildeDfourthreeDynk
\def\EsixtwoDynk{\put(0,0){\circle{4}} \put(2,0){\line(1,0){36}}
\put(40,0){\circle{4}} \put(40,2){\line(1,0){40}}
\put(40,-2){\line(1,0){40}} \put(55,0){\line(2,-1){10}}
\put(55,0){\line(2,1){10}} \put(80,0){\circle{4}}
\put(82,0){\line(1,0){36}} \put(120,0){\circle{4}}
\put(0,-10){\makebox(0,0){$\scriptstyle{2}$}}
\put(40,-10){\makebox(0,0){$\scriptstyle{3}$}}
\put(80,-10){\makebox(0,0){$\scriptstyle{2}$}}
\put(120,-10){\makebox(0,0){$\scriptstyle{1}$}} }
\def\tildeEsixtwoDynk{\put(0,0){\circle{4}}
\put(2,0){\line(1,0){36}} \put(40,0){\EsixtwoDynk}
\put(0,-10){\makebox(0,0){$\scriptstyle{1}$}} }
\title{KW-sections for exceptional type Vinberg's $\theta$-groups}
\author{Paul Levy \\
paul.levy@epfl.ch}
\begin{document}

\bibliographystyle{plain}

\maketitle{}{}

\begin{abstract}
Let $k$ be an algebraically closed field of characteristic not equal to 2 or 3, let $G$ be an almost simple algebraic group of type $F_4$, $G_2$ or $D_4$ and let $\theta$ be an automorphism of $G$ of finite order, coprime to the characteristic.
In this paper we consider the $\theta$-group (in the sense of Vinberg) associated to these choices; we classify the positive rank automorphisms via Kac diagrams and we describe the little Weyl group in each case.
As a result we show that all $\theta$-groups in types $G_2$, $F_4$ and $D_4$ have KW-sections, confirming a conjecture of Popov in these cases.
\end{abstract}

\section{Introduction}

Let $G$ be a reductive algebraic group over the algebraically closed field $k$ and let ${\mathfrak g}=\Lie(G)$.
Let $\theta$ be a semisimple automorphism of $G$ of order $m$, let $d\theta$ be the differential of $\theta$ and let $\zeta$ be a primitive $m$-th root of unity in $k$.
(Thus if $k$ is of positive characteristic $p$ then $p\nmid m$.)
There is a direct sum decomposition $${\mathfrak g}={\mathfrak g}(0)\oplus\ldots \oplus{\mathfrak g}(m-1)\;\;\mbox{where}\;{\mathfrak g}(i)=\{ x\in{\mathfrak g}\,|\,d\theta(x)=\zeta^i x\}$$
This is a ${\mathbb Z}/m{\mathbb Z}$-grading of ${\mathfrak g}$, that is $[{\mathfrak g}(i),{\mathfrak g}(j)]\subset{\mathfrak g}(i+j)$ ($i,j\in{\mathbb Z}/m{\mathbb Z}$).
Let $G(0)=(G^\theta)^\circ$.
Then $G(0)$ is reductive, $\Lie(G(0))={\mathfrak g}(0)$ and $G(0)$ stabilizes each component ${\mathfrak g}(i)$.
In \cite{vin}, Vinberg studied invariant-theoretic properties of the $G(0)$-representation ${\mathfrak g}(1)$.
The central concept in \cite{vin} is that of a {\it Cartan subspace}, which is a subspace of ${\mathfrak g}(1)$ which is maximal subject to being commutative and consisting of semisimple elements.
The principal results of \cite{vin} (for $k={\mathbb C}$) are:

 - any two Cartan subspaces of ${\mathfrak g}(1)$ are $G(0)$-conjugate and any semisimple element of ${\mathfrak g}(1)$ is contained in a Cartan subspace.

 - the $G(0)$-orbit through $x\in{\mathfrak g}(1)$ is closed if and only if $x$ is semisimple, and is unstable (that is, its closure contains $0$) if and only if $x$ is nilpotent.

 - let ${\mathfrak c}$ be a Cartan subspace of ${\mathfrak g}(1)$ and let $W_{\mathfrak c}=N_{G(0)}({\mathfrak c})/Z_{G(0)}({\mathfrak c})$, the little Weyl group.
Then we have a version of the Chevalley restriction theorem: the embedding ${\mathfrak c}\hookrightarrow{\mathfrak g}(1)$ induces an isomorphism $k[{\mathfrak g}(1)]^{G(0)}\rightarrow k[{\mathfrak c}]^{W_{\mathfrak c}}$.

 - $W_{\mathfrak c}$ is a finite group generated by complex (often called pseudo-)reflections, hence $k[{\mathfrak c}]^{W_{\mathfrak c}}$ is a polynomial ring.
 
In the case of an involution, the decomposition ${\mathfrak g}={\mathfrak g}(0)\oplus{\mathfrak g}(1)$ is the symmetric space decomposition, much studied since the seminal paper of Kostant and Rallis \cite{kostrall}.
(Many of the results of \cite{kostrall} were generalized to good positive characteristic by the author in \cite{invs}.)
While the theory of $\theta$-groups can in some ways be thought of as an extension of the theory of symmetric spaces, there are certain differences of emphasis.
Broadly speaking, one can say that the results here on geometry and orbits are weaker than for symmetric spaces, but the connection with groups generated by pseudoreflections is more interesting.
Recall that a KW-section is an affine linear subspace ${\mathfrak v}\subset{\mathfrak g}(1)$ such that restricting to ${\mathfrak v}$ induces an isomorphism $k[{\mathfrak g}(1)]^{G(0)}\rightarrow k[{\mathfrak v}]$.
A long-standing conjecture of Popov \cite{popov} is the existence of a KW-section in ${\mathfrak g}(1)$ for the action of $G(0)$.
In characteristic zero, this conjecture is known to hold in the cases when $G(0)$ is semisimple \cite{pansemislice} and when ${\mathfrak g}(1)$ contains a regular nilpotent element of ${\mathfrak g}$ (the `N-regular' case) \cite{panslice}, both due to Panyushev.

In \cite{thetagroups}, the results of \cite{vin} were extended to the case where $k$ has positive characteristic $p$ and $G$ satisfies the {\it standard hypotheses}: (A) the derived subgroup $G'$ of $G$ is simply-connected, (B) $p$ is good for $G$ and (C) there exists a non-degenerate $G$-equivariant symmetric bilinear form $\kappa:{\mathfrak g}\times{\mathfrak g}\rightarrow k$.
(In fact, the first three results mentioned above hold for all $p>2$; the standard hypotheses are only required for the fourth.)
Moreover, an analysis of the little Weyl group and an extension of Panyushev's result on N-regular automorphisms revealed that KW-sections exist for all classical graded Lie algebras in zero or odd positive characteristic.
This leaves Popov's conjecture open in the following cases: (i) $G$ is of exceptional type, and (ii) $G$ is simply-connected of type $D_4$, $p>3$ and $\theta$ is an outer automorphism of $G$ such that $\theta^3$ is inner.
Following Vinberg, we refer to all such cases as exceptional type $\theta$-groups.

In characteristic zero the automorphisms of finite order were classified by Kac \cite{kacautos}.
In Sect. 2 we show that Kac's classification is valid in characteristic $p$, restricting to automorphisms of order coprime to $p$.
(The extension of Kac's results to positive characteristic was already outlined by Serre \cite{serrekac}.)
Subsequently, we determine (Sect. 4) the positive rank exceptional $\theta$-groups of types $F_4$, $G_2$ and $D_4$ and describe (Sect. 5) the corresponding little Weyl groups.
(The rank of an automorphism is the dimension of a Cartan subspace, thus $\theta$ is of positive rank if and only if the invariants are non-trivial.)
Our method is a continuation of the method used in \cite{thetagroups} to determine the Weyl group for the classical graded Lie algebras in positive characteristic.
In particular, given an automorphism $\theta$ let $T$ be a $\theta$-stable maximal torus such that $\Lie(T)$ contains a Cartan subspace.
Then $\theta=\Int n_w$ where $n_w\in N_{\Aut G}(T)$, and $w=n_wT\in N_{\Aut G}(T)/T$ is either of order $m$, or is trivial (in which case $\theta$ is of zero rank).
Using Carter's classification of conjugacy classes in the Weyl group, this approach gives us a relatively straightforward means to determine the positive rank automorphisms and their Weyl groups (see Tables 1-3).
This classification allows us to show that all $\theta$-groups in types $G_2$, $F_4$ or $D_4$ have KW-sections (see Thm. \ref{main}):
%From this classification, we deduce our main result (see Thm. \ref{main}):

\begin{theo}
Any $\theta$-group of type $G_2$, $F_4$ or $D_4$ admits a KW-section.
\end{theo}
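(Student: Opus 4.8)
The plan is to reduce to the case of positive rank and then to dispatch the finitely many surviving cases using the explicit little Weyl groups computed in Sections~4 and~5, in each one producing a KW-section as a Kostant--Weierstrass slice attached to a well-chosen nilpotent element of $\mathfrak{g}(1)$.

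First I would treat the zero rank case. If $\rank\theta=0$ then a Cartan subspace is $\{0\}$, so by Vinberg's restriction theorem $k[\mathfrak{g}(1)]^{G(0)}=k$ and the single point $\{0\}\subset\mathfrak{g}(1)$ is a KW-section. Hence I may assume $\theta$ has positive rank; by the classification there remain only finitely many $\theta$, and for each the little Weyl group $W_{\mathfrak c}$ --- and thus the degrees $d_1\leq\cdots\leq d_r$ of a system of fundamental invariants of $k[\mathfrak{g}(1)]^{G(0)}\cong k[\mathfrak{c}]^{W_{\mathfrak c}}$, where $r=\rank\theta$ --- is known.

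The main tool is a slice construction. Given a nilpotent $e\in\mathfrak{g}(1)$, I would use the hypothesis on $\charac k$ to embed it in an $\mathfrak{sl}_2$-triple $(e,h,f)$ with $h\in\mathfrak{g}(0)$ and $f\in\mathfrak{g}(-1)$, and set $\mathfrak{v}=e+\mathfrak{z}_{\mathfrak{g}(1)}(f)$ with $\mathfrak{z}_{\mathfrak{g}(1)}(f)=\{x\in\mathfrak{g}(1)\mid[f,x]=0\}$. Choosing a cocharacter $\lambda\colon\mathbb{G}_m\to G(0)$ with $d\lambda(1)=h$, the formula $\gamma(t)\cdot x:=t^{2}\Ad(\lambda(t)^{-1})x$ defines a $\mathbb{G}_m$-action on $\mathfrak{g}(1)$ fixing $e$ and acting on $\mathfrak{z}_{\mathfrak{g}(1)}(f)$ (the space of $\ad h$-lowest weight vectors in $\mathfrak{g}(1)$) with strictly positive weights, so that $\mathfrak{v}$ is contracted to $e$ as $t\to0$. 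Since $\lambda(t)\in G(0)$, a $G(0)$-invariant homogeneous of degree $d$ restricts to a $\gamma$-weighted-homogeneous function of weight $2d$ on $\mathfrak{v}$. Granting that the action map $G(0)\times\mathfrak{v}\to\mathfrak{g}(1)$ is dominant, the restriction $k[\mathfrak{g}(1)]^{G(0)}\to k[\mathfrak{v}]$ is injective; and it is then an isomorphism as soon as $\dim\mathfrak{z}_{\mathfrak{g}(1)}(f)=r$ and the multiset of $\gamma$-weights on $\mathfrak{z}_{\mathfrak{g}(1)}(f)$ equals $\{2d_1,\dots,2d_r\}$, for then the two weighted polynomial rings $k[\mathfrak{g}(1)]^{G(0)}$ and $k[\mathfrak{v}]$ have the same Hilbert series and an injection between them must be onto.

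When $G(0)$ is semisimple \cite{pansemislice}, or when $\mathfrak{g}(1)$ contains a regular nilpotent element of $\mathfrak{g}$ (the N-regular case) \cite{panslice}, these conditions are verified by Panyushev, and I would simply invoke his theorems, taking $e$ regular in the N-regular case. The real content is thus the positive rank cases in which $G(0)$ is not semisimple and no regular nilpotent of $\mathfrak{g}$ lies in $\mathfrak{g}(1)$, and the main obstacle --- the step I expect to require genuine work --- is to exhibit for each such $\theta$ in Tables 1--3 a nilpotent $e\in\mathfrak{g}(1)$ meeting both the dimension condition $\dim\mathfrak{z}_{\mathfrak{g}(1)}(f)=r$ and the weight-matching condition above. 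As the surviving $\theta$-groups are few and the reflection groups $W_{\mathfrak c}$ occurring are small, I would settle these case by case: choose a suitably regular $e\in\mathfrak{g}(1)$, compute the $\ad h$-weights on $\mathfrak{z}_{\mathfrak{g}(1)}(f)$, and check that the resulting weighted degrees agree with the degrees of $W_{\mathfrak c}$ already determined in Section 5.
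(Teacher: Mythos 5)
Your framework is sound --- zero rank is trivial, and the slice $e+\mathfrak{z}_{\mathfrak{g}(1)}(f)$ with the contracting $\mathbb{G}_m$-action and the Hilbert-series comparison is exactly the mechanism behind Panyushev's theorems --- but the proof has a genuine gap at the decisive step. For the positive rank automorphisms that are not N-regular in $G$ (in the paper's tables: $w$ of type $B_4$, $C_3$, $B_3$, $B_2$, $A_2$, $\tilde{A}_2$, $\tilde{A}_1$ in $F_4$, and $C_3$, $\tilde{A}_2$ in $D_4^{(3)}$), you say you would ``choose a suitably regular $e\in\mathfrak{g}(1)$'' and ``check that the resulting weighted degrees agree'' --- but you give no argument that such an $e$ exists, and you carry out none of the checks. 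The existence of a nilpotent whose slice weights match the degrees of $W_{\mathfrak c}$ is precisely the content of Popov's conjecture in these cases (it is what remains open in type $E$), so deferring it to an unexecuted case-by-case computation leaves the theorem unproved. The paper avoids any new slice computations in $G$ by a structural reduction: for each such $\theta$ it exhibits a $\theta$-stable reductive subgroup $L$ with ${\mathfrak c}\subset\Lie(L)$ and, crucially, $N_{L(0)}({\mathfrak c})/Z_{L(0)}({\mathfrak c})=W_{\mathfrak c}$ --- this equality being the substance of the Section 5 lemmas --- such that $\theta|_L$ \emph{is} N-regular ($L$ is the Levi subgroup $L_1$ in most cases, a $B_3$ or $C_3$ subgroup for $w$ of type $A_2$ or $\tilde{A}_2$ in $F_4$ via Lemma \ref{lastf4}, and $\SL(2,k)^3$ for the two remaining $D_4^{(3)}$ cases via Lemma \ref{dweyl}). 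Lemma \ref{Nreg} then produces a KW-section inside $\Lie(L)(1)$, and the matching of little Weyl groups transports it to one for $\mathfrak{g}(1)$. Your proposal uses the Section 5 data only to know the target degrees, not to perform this reduction, so the hard work is neither done nor replaced.

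A secondary problem: you invoke Panyushev's theorem for semisimple $G(0)$ \cite{pansemislice}, but that result is proved in characteristic zero, while the theorem here is asserted for all good characteristics $p>3$; only the N-regular result has been extended to positive characteristic (\cite[Prop. 5.3]{thetagroups}, the paper's Lemma \ref{Nreg}). Several of the relevant Kac diagrams (those with a single label $1$, e.g.\ $00001$ in $F_4$) have semisimple $G(0)$, so relying on \cite{pansemislice} there is unjustified in characteristic $p$; the paper instead handles these through the same N-regular reduction (e.g.\ the short $\SL(2)$ for the class $\tilde{A}_1$). Also note a small omission in your transport step: if you did obtain the section inside a subgroup rather than directly in $\mathfrak{g}(1)$, you would need the equality of little Weyl groups to conclude that restriction of $G(0)$-invariants to the subgroup's degree-one piece is an isomorphism --- dominance of $G(0)\times{\mathfrak v}\to\mathfrak{g}(1)$ alone does not give this for free.
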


Together with \cite{thetagroups}, this leaves only the $\theta$-groups of type $E$ to deal with.
These will be dealt with in subsequent work with B. Gross, M. Reeder and J-K. Yu.
%The methods employed in this paper could be applied to solve cases $E_6$, $E_7$ and $E_8$, but the calculations would be rather laborious.
%In work to appear with B. Gross, M. Reeder and J-K. Yu, we will determine all of the remaining (exceptional type) positive rank $\theta$-groups and %establish Thm. \ref{main} for each of them, thus completing the proof of Popov's conjecture.

{\it Notation.}

Throughout, $G$ will denote an almost simple (semisimple) algebraic group and ${\mathfrak g}$ its Lie algebra.
If $T$ is a maximal torus of $G$ and $\alpha\in\Phi(G,T)$ then we denote the corresponding coroot by $\alpha^\vee$, which we can also consider as a cocharacter $k^\times\rightarrow T$.
If $H$ is a subgroup of $G$ then $N_G(H)$ (resp. $Z_G(H)$) will denote its normalizer (resp. centralizer) in $G$.
Similar notation will be used for elements of $G$ and subalgebras or elements of ${\mathfrak g}$.
The connected component of an algebraic group $H$ is denoted $H^\circ$; the derived subgroup of a connected algebraic group is denoted $H'$.
We denote by $\mu_s$ the cyclic group of order $s$.
%($s$ will always be coprime to $m$.)
Throughout the paper we will assume that $m$ is coprime to the characteristic of $k$, if this is positive.

{\it Acknowledgements.}

I would like to thank Ross Lawther for directing me towards the paper of Carter.
I have also benefited from numerous helpful comments from Dmitri Panyushev, Oksana Yakimova, Willem de Graaf, Jiu-Kang Yu, Benedict Gross, Mark Reeder and Gunter Malle.

\section{Preparation}

We continue with the basic set-up of the introduction.
Here we recall some results and definitions which will be necessary in what follows.

Let $\Phi$ be an irreducible root system with basis $\Delta=\{\alpha_1,\ldots ,\alpha_n\}$.
Let $\hat\alpha=\sum_{i=1}^n m_i\alpha_i$ be the longest root with respect to $\Delta$.
Recall that $p$ is {\it good} for $G$ if $p>m_i$ for all $i$, $1\leq i\leq n$; otherwise $p$ is bad.
Specifically, $p$ is bad if either: $p=2$ and $\Phi$ is not of type $A$; $p=3$ and $\Phi$ is of exceptional type, or; $p=5$ and $\Phi$ is of type $E_8$.
If $\Phi$ is a reducible root system, then $p$ is good for $\Phi$ if and only if $p$ is good for each irreducible component of $\Phi$; $p$ is good for the reductive group $G$ if and only if it is good for the root system of $G$.
We will sometimes refer to the {\it standard hypotheses} on a group $G$: (A) that $p=\charac k$ is good for $G$; (B) that $G'$ is simply-connected; (C) that there exists a non-degenerate symmetric bilinear form ${\mathfrak g}\times{\mathfrak g}\rightarrow k$.
If $G$ is simple and of exceptional type and $p$ is good for $G$, then $G$ is separably isogenous to a group satisfying the standard hypotheses.
(In particular, if $G$ is of type $F_4$ or $G_2$ then (B) and (C) are automatic as long as (A) is satisfied.)

Let $T$ be any $\theta$-stable torus in $G$.
If $\theta$ is an involution then it is not difficult to see that there is a decomposition $T=T_+\cdot T_-$, where $T_+=\{Êt\in T\, |\, \theta(t)=t\}^\circ$ and $T_-=\{ t\in T\, |\, \theta(t)=t^{-1}\}^\circ$.
In \cite{thetagroups} we introduced analogues of $T_\pm$ for an automorphism of arbitrary finite order $m$ (if the characteristic is positive, then we require $p\nmid m$).
Let $p_d(x)$ be the (monic) minimal polynomial of $e^{2\pi i/d}$ over ${\mathbb Q}$.
Since $p_d$ has coefficients in ${\mathbb Z}$, we can (and will) consider it as a polynomial in ${\mathbb F}_p[x]$ as well.
If $p\nmid d$ then $p_d(x)$ has no repeated roots in $k$.
For any polynomial $f=\sum_{i=0}^na_i x^i\in{\mathbb Z}[x]$, let $\overline{f}(\theta)$ denote the algebraic endomorphism $T\rightarrow T$, $t\mapsto\prod_{i=0}^n \theta^i(t^{a_i})$.
Then the map ${\mathbb Z}[x]\rightarrow \End T$, $f\mapsto \overline{f}(\theta)$ is a homomorphism of rings, where the addition in $\End T$ is pointwise multiplication of endomorphisms, and the multiplication is composition.

\begin{lemma}[{\cite[Lemma 1.10]{thetagroups}}]\label{stabletori}
For each $d|m$, let $T_d=\{ t\in T\, |\, \overline{p_{m/d}}(\theta)(t)=e\}^\circ$.
Then $T=\prod_{d|m}T_d$ and $T_{d_1}\cap T_{d_2}$ is finite for any distinct $d_1,d_2$.
Moreover, ${\mathfrak t}=\sum_{i=0}^m{\mathfrak t}(i)$ where ${\mathfrak t}(i)={\mathfrak t}\cap{\mathfrak g}(i)$ and $\Lie(T_d)=\sum_{(i,m)=d}{\mathfrak t}(i)$.
\end{lemma}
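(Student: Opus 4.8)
The plan is to transfer the entire statement to the cocharacter lattice $Y=X_*(T)$, where it reduces to elementary linear algebra. Since $T$ is $\theta$-stable, $\theta$ induces a $\mathbb{Z}$-linear automorphism $\sigma$ of $Y$ with $\sigma^m=\Id$; writing $t=\lambda(s)$ for a cocharacter $\lambda$, a direct computation gives $\overline{f}(\theta)(\lambda(s))=(f(\sigma)\lambda)(s)$, so for any $f\in\mathbb{Z}[x]$ the endomorphism $\overline{f}(\theta)$ of $T$ is the one induced by $f(\sigma)\in\End(Y)$. For any $\psi\in\End(T)$ with matrix $M$ on $Y$, the connected component $(\ker\psi)^\circ$ is the subtorus with cocharacter lattice $\ker(M\colon Y\to Y)$, because the image of any cocharacter of $T$ landing in $\ker\psi$ is connected and hence already lies in $(\ker\psi)^\circ$. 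Applying this with $M=p_{m/d}(\sigma)$ identifies $X_*(T_d)=\ker\!\big(p_{m/d}(\sigma)\big)$.

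Next I would bring in separability. Since $x^m-1=\prod_{e\mid m}p_e(x)$ has distinct roots over $\mathbb{Q}$, $\sigma$ acts semisimply on $Y_{\mathbb{Q}}:=Y\otimes\mathbb{Q}$ and $Y_{\mathbb{Q}}=\bigoplus_{e\mid m}V_e$, where $V_e=\ker p_e(\sigma)$ is the isotypic component for the primitive $e$-th roots of unity. As $V_{m/d}$ is a $\mathbb{Q}$-subspace, $\ker\!\big(p_{m/d}(\sigma)\big)=Y\cap V_{m/d}$ is saturated in $Y$, so $T_d$ is a genuine subtorus of dimension $\dim_{\mathbb{Q}}V_{m/d}$. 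The lattices $Y\cap V_{m/d}$ meet pairwise in $0$, which forces $T_{d_1}\cap T_{d_2}$ to be finite for $d_1\neq d_2$; and their sum is a finite-index sublattice of $Y$, since it spans $Y_{\mathbb{Q}}$. Hence the multiplication map $\prod_{d\mid m}T_d\to T$ is an isogeny, and because $k$ is algebraically closed it is surjective on points, giving $T=\prod_{d\mid m}T_d$.

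Finally the Lie algebra statement runs along identical lines with $k$ in place of $\mathbb{Q}$: here ${\mathfrak t}=\Lie(T)=Y\otimes_{\mathbb{Z}}k$, with $d\theta$ acting as $\sigma\otimes 1$, and the hypothesis $p\nmid m$ is precisely what keeps $x^m-1$ separable over $k$, so $\sigma$ is again semisimple and ${\mathfrak t}=\bigoplus_i{\mathfrak t}(i)$ with ${\mathfrak t}(i)$ the $\zeta^i$-eigenspace. One then matches eigenvalues: $\zeta^i$ is a primitive $\big(m/\gcd(i,m)\big)$-th root of unity, so ${\mathfrak t}(i)\subset\ker p_{m/d}(d\theta)$ exactly when $\gcd(i,m)=d$; comparing this with the saturated lattice $Y\cap V_{m/d}$ (whose reduction mod $p$ has the matching dimension) yields $\Lie(T_d)=(Y\cap V_{m/d})\otimes k=\ker p_{m/d}(d\theta)=\sum_{(i,m)=d}{\mathfrak t}(i)$. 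The only real obstacle throughout is the passage between $\mathbb{Z}$ and its quotient fields: the isotypic decomposition lives over $\mathbb{Q}$ (resp. $k$), so the $T_d$ form only an almost-direct product, and one must separately verify both that the relevant cocharacter lattices are saturated and that $p\nmid m$ preserves the semisimplicity of $\sigma$ after reduction. Once the problem is reduced to $Y$, each of these verifications is routine.
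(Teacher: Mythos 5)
Your proof is correct, and it isolates exactly the right characteristic-sensitive points: saturation of the lattices $Y\cap V_{m/d}$, and semisimplicity of $d\theta$ coming from separability of $x^m-1$ when $p\nmid m$. Your dimension count via multiplicities of primitive $(m/d)$-th roots of unity in the characteristic polynomial (unchanged under reduction mod $p$) correctly closes the only delicate equality, $(Y\cap V_{m/d})\otimes k=\ker p_{m/d}(d\theta)=\sum_{(i,m)=d}{\mathfrak t}(i)$, without needing the sum of lattices $\bigoplus_d (Y\cap V_{m/d})$ to remain direct after tensoring with $k$ (its index in $Y$ is in fact divisible only by primes dividing $m$, so that route would also work, but your multiplicity argument sidesteps it). For comparison: the paper gives no in-text proof --- the lemma is imported from \cite[Lemma 1.10]{thetagroups} --- but the machinery set up immediately before it (the ring homomorphism ${\mathbb Z}[x]\rightarrow\End T$, $f\mapsto\overline{f}(\theta)$) signals an argument run at the level of $T$ itself: from $\prod_{d|m}p_{m/d}(x)=x^m-1$ and a B\'ezout identity $\sum_d u_d\cdot\bigl((x^m-1)/p_{m/d}\bigr)=N$ over ${\mathbb Z}$, with $N$ a nonzero integer, one writes $t^N$ as a product of factors each lying in $T_d$ (being in the image of the connected group $T$ and killed by $\overline{p_{m/d}}(\theta)$); surjectivity of the $N$-th power map on a torus gives $T=\prod_d T_d$, coprimality of distinct cyclotomic polynomials gives finiteness of $T_{d_1}\cap T_{d_2}$, and the same identities applied to $d\theta$ give the Lie algebra statement. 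Your cocharacter-lattice route is instead precisely the ``alternative description'' $X_*(T_d)=\ker p_{m/d}(\sigma)$ that the paper's Remark 1.2 attributes to Brou\'e--Malle--Michel (note that the remark's formula $Y(T_d)=\ker p_d(\theta^*)$ uses an index convention at odds with the lemma; yours is the one consistent with $\Lie(T_d)=\sum_{(i,m)=d}{\mathfrak t}(i)$). The trade-off: the endomorphism-ring proof never tensors a lattice with $k$, so saturation and mod-$p$ semisimplicity never arise explicitly and the argument is uniform in the characteristic, whereas your proof buys a more transparent structural picture --- the $T_d$ and their dimensions are identified at once from the isotypic decomposition of $Y_{\mathbb Q}$ --- at the cost of the two reduction-mod-$p$ verifications, which you carry out.
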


\begin{rk}
An alternative description of the tori $T_d$ is given in \cite{broue-malle-michel}.
Since $\theta$ is an algebraic automorphism of $T$, it induces an automorphism $\theta^*$ of $Y(T):=\Hom(k^\times, T)$ given by $\theta^*(\lambda)(t)=\theta^*(\lambda(t))$.
Then set $Y(T_d)=\ker p_d(\theta^*)$.
This also allows us to define the tori $T_d$ when $m$ is divisible by $p$; however, in this case we do not have a direct sum decomposition ${\mathfrak t}=\oplus_{d|m}\Lie(T_d)$.
\end{rk}

An immediate application of Lemma \ref{stabletori} is the following.

\begin{lemma}\label{fpfree}
Let $T$ be a maximal torus of $G$ and let $w\in W=N_G(T)/T$ be of order $m$ ($p\nmid m$) and have finitely many fixed points on $T$.
Then any two representatives for $w$ in $N_G(T)$ are $T$-conjugate, and have the same $m$-th power.
\end{lemma}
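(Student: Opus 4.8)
The plan is to fix one representative $n\in N_G(T)$ of $w$ and to show that its $T$-conjugacy class already fills out the whole coset $nT$ of representatives; the equality of $m$-th powers will then be almost immediate.

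First I would record how $T$-conjugation moves a representative. Write $\sigma$ for the automorphism $t\mapsto ntn^{-1}$ of $T$ induced by $w$. A direct computation gives, for $s\in T$,
$$sns^{-1}=s\,\sigma(s)^{-1}n,$$
since $T$ is abelian. Thus the $T$-conjugates of $n$ are exactly the elements $\phi(s)n$ with $s\in T$, where $\phi\colon T\to T$ is the morphism $\phi(s)=s\,\sigma(s)^{-1}$. On the other hand every representative of $w$ has the form $t_0n$ with $t_0\in T$, so the assertion that all representatives are $T$-conjugate is equivalent to the surjectivity of $\phi$.

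The crux is therefore to prove that $\phi$ is surjective, and this is the only point at which the hypothesis enters. Because $T$ is abelian, $\phi$ is a homomorphism of algebraic groups, and $\ker\phi=\{s\in T:\sigma(s)=s\}$ is precisely the set of fixed points of $w$ on $T$, which is finite by assumption. (In the notation preceding Lemma \ref{stabletori} this is just $\phi=\overline{(1-x)}(\theta)$ for $\theta=\sigma$, so the finite-fixed-point condition says exactly that $\phi$ is an isogeny onto its image.) Since $\ker\phi$ is finite we have $\dim\im\phi=\dim T$; as $\im\phi$ is a closed connected subgroup of $T$ of full dimension, it must equal $T$. I expect this isogeny step to carry the weight of the argument, even though it is short once phrased this way; everything else is formal.

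Finally, for the $m$-th powers: since $w$ has order $m$ we have $n^m\in T$. By the first part any other representative is of the form $sns^{-1}$ for some $s\in T$, and hence its $m$-th power equals $s\,n^m s^{-1}=n^m$, because $n^m\in T$ and $T$ is abelian. This gives the second assertion.
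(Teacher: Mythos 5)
Your proposal is correct, and it reaches the conclusion by a route that is genuinely, if modestly, different from the paper's. The common core is the surjectivity of the map $\phi\colon t\mapsto t\,w(t)^{-1}$, which in both arguments yields the $T$-conjugacy of all representatives. The paper, however, deduces everything from the torus decomposition $T=\prod_{d\mid m}T_d$ of Lemma \ref{stabletori}: finiteness of the fixed points means $T_m=\{1\}$, giving surjectivity of $\phi$, and --- more significantly --- since each cyclotomic factor $p_{m/i}$ with $i\neq m$ divides $1+x+\cdots+x^{m-1}$, the norm $t\,w(t)\cdots w^{m-1}(t)$ is trivial on $T=\prod_{i\mid m,\,i\neq m}T_i$, whence $(n_w t)^m=n_w^m$ for all $t\in T$; the equal-$m$-th-power claim is thus proved directly and independently of the conjugacy statement. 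You instead establish surjectivity by the elementary kernel--dimension count (a homomorphism of the connected group $T$ into itself with finite kernel has closed connected image of full dimension, hence image all of $T$), and then obtain the equal-powers claim as a formal corollary of conjugacy, using only that $n^m\in T$ and that $s$ and $n^m$ commute in the abelian group $T$. Your version is a little more economical and more general: beyond the finite-fixed-point hypothesis it nowhere uses $p\nmid m$ or the cyclotomic machinery, whereas the paper's computation buys the explicit norm identity and situates the lemma inside the $T_d$ formalism that is reused later (e.g.\ in Lemma \ref{criterion}). Both are complete proofs.
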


\begin{proof}
Suppose $n_w\in N_G(T)$ is such that $w=n_wT$.
We have $T=\prod_{i|m,i\neq m}T_i$ and therefore $tw(t)w^2(t)\ldots w^{-1}(t)=1$ for all $t\in T$.
Thus $(n_w t)^m=n_w^m$ for all $t\in T$.
Moreover, since $T_m=\{ 1\}$, the map $T\rightarrow T$, $t\mapsto tw(t^{-1})$ is surjective.
In particular, if $t\in T$ then there exists $s\in T$ such that $tn_w=sn_ws^{-1}$.
\end{proof}

Let now $\theta$ be an automorphism of $G$ of order $m$, let $\zeta$ be a primitive $m$-th root of 1 in $k$ and let ${\mathfrak g}(i)=\{ x\in{\mathfrak g}\, |\, d\theta(x)=\zeta^i x\}$.
A $\theta$-stable torus in $G$ is {\bf $\theta$-split} if $T=T_1$, and is {\bf $\theta$-anisotropic} if $T_m=\{ e\}$, that is, if $\theta$ has finitely many fixed points on $T$.
We recall that a Cartan subspace of ${\mathfrak g}(1)$ is a maximal subspace which is commutative and consists of semisimple elements.
We have the following results on Cartan subspaces.

\begin{lemma}\label{stabtori}
(a) Any two Cartan subspaces of ${\mathfrak g}(1)$ are $G(0)$-conjugate, and any semisimple element of ${\mathfrak g}(1)$ is contained in a Cartan subspace.

(b) Let ${\mathfrak c}$ be a Cartan subspace of ${\mathfrak g}(1)$.
There is a $\theta$-split torus $T_1$ in $G$ such that ${\mathfrak c}\subset \Lie(T_1)$.
We have $\dim T_1=\dim{\mathfrak c}\cdot\varphi(m)$, where $\varphi(m)$ is the Euler number of $m$.
(In particular, $\dim (\Lie(T_1)\cap {\mathfrak g}(i))=\dim{\mathfrak c}$ for any $i$ coprime to $m$.)
If $p>2$, then $T_1$ is unique.

(c) Let ${\mathfrak c}$ be a Cartan subspace of ${\mathfrak g}(1)$ and let $W_{\mathfrak c}=N_{G(0)}({\mathfrak c})/Z_{G(0)}({\mathfrak c})$, the little Weyl group. If $\charac k\neq 2$, then the embedding ${\mathfrak c}\hookrightarrow{\mathfrak g}(1)$ induces an isomorphism $k[{\mathfrak g}(1)]^{G(0)}\rightarrow k[{\mathfrak c}]^{W_{\mathfrak c}}$.

(d) If $\charac k=0$ or if $G$ satisfies the standard hypotheses, then $W_{\mathfrak c}$ is generated by pseudoreflections and $k[{\mathfrak c}]^{W_{\mathfrak c}}$ is a polynomial ring.
\end{lemma}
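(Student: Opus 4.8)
The plan is to follow Vinberg's strategy from \cite{vin} step by step, adapting each ingredient to positive characteristic and invoking the standard hypotheses only where they are genuinely needed. Throughout I write $r$ for the common dimension of the Cartan subspaces and use Lemma \ref{stabletori} freely.

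For (a) I would first isolate the \emph{regular semisimple} elements of ${\mathfrak g}(1)$: those semisimple $x$ for which $\dim{\mathfrak g}(1)^x$ (the centralizer in ${\mathfrak g}(1)$, equivalently $Z_{\mathfrak g}(x)\cap{\mathfrak g}(1)$) is minimal. A first step is to show these form a dense open subset, which reduces to showing that the generic element of ${\mathfrak g}(1)$ is semisimple; this follows from the grading together with the Hilbert--Mumford/Kempf--Rousseau description of the nullcone, and is valid for $p>2$. One then checks that ${\mathfrak c}={\mathfrak g}(1)^x$ for regular semisimple $x$ is a Cartan subspace, and conversely that a generic element of any Cartan subspace is regular semisimple. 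Conjugacy becomes a dimension count: $\dim G(0)\cdot{\mathfrak c}=\dim G(0)-\dim Z_{G(0)}({\mathfrak c})+r$ equals $\dim{\mathfrak g}(1)$, so the union of conjugates of a single ${\mathfrak c}$ is dense; since $G(0)$ is connected and ${\mathfrak g}(1)$ irreducible, two such dense saturated sets meet, giving conjugacy. Containment of an arbitrary semisimple $x$ in a Cartan subspace then follows by Noetherian induction, since $Z_{G(0)}(x)^\circ$ acting on ${\mathfrak g}(1)^x$ is again a graded situation of strictly smaller dimension unless $x$ is already regular.

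For (b) I would take a regular semisimple $x\in{\mathfrak c}$; since $d\theta(x)=\zeta x$, the centralizer $Z_G(x)$ is $\theta$-stable and reductive, so it contains a $\theta$-stable maximal torus $T$ of $G$ with $x\in\Lie(T)$, whence ${\mathfrak c}\subseteq{\mathfrak t}(1):=\Lie(T)\cap{\mathfrak g}(1)$. As ${\mathfrak t}(1)$ is commutative and consists of semisimple elements, maximality of ${\mathfrak c}$ forces ${\mathfrak c}={\mathfrak t}(1)$. Decomposing $T=\prod_{d\mid m}T_d$ by Lemma \ref{stabletori}, the $\theta$-split part is $T_1$ with $\Lie(T_1)=\sum_{(i,m)=1}{\mathfrak t}(i)$. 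The dimension count is cleanest on cocharacter lattices: $Y(T_1)=\ker p_m(\theta^*)$ is a module over $\mathbb Z[\zeta]\cong\mathbb Z[x]/(p_m)$, so $Y(T_1)\otimes\mathbb Q$ is a $\mathbb Q(\zeta)$-vector space, of dimension $r=\dim{\mathfrak t}(1)$ over $\mathbb Q(\zeta)$; hence $\dim T_1=r\,\varphi(m)$ and each eigenspace ${\mathfrak t}(i)$ with $(i,m)=1$ has dimension $r$. For uniqueness when $p>2$, I would show that any $\theta$-split torus containing ${\mathfrak c}$ lies in $Z_G({\mathfrak c})^\circ$ and is a maximal $\theta$-split subtorus there; the hypothesis $p>2$ guarantees the relevant centralizer is smooth and that the decomposition of Lemma \ref{stabletori} is available, pinning $T_1$ down.

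Parts (c) and (d) are where the real work lies, and here I would lean on the apparatus of \cite{thetagroups}. Injectivity of restriction is formal: every closed $G(0)$-orbit is the orbit of a semisimple element, meets ${\mathfrak c}$ by (a), and meets it in a single $W_{\mathfrak c}$-orbit, so $G(0)$-invariants are determined by their restriction to ${\mathfrak c}$. Surjectivity---the genuine Chevalley restriction theorem---I would obtain by showing the induced finite morphism ${\mathfrak c}\quot W_{\mathfrak c}\to{\mathfrak g}(1)\quot G(0)$ is birational onto a normal target, hence an isomorphism; the hypothesis $\charac k\neq2$ enters precisely in guaranteeing separability and normality of the quotient. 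For (d), under the standard hypotheses the form $\kappa$ identifies ${\mathfrak g}(1)$ with its dual as a $G(0)$-module and lets one prove that the nullcone of ${\mathfrak g}(1)$ is equidimensional, so that $k[{\mathfrak g}(1)]$ is a free $k[{\mathfrak g}(1)]^{G(0)}$-module; freeness forces $k[{\mathfrak c}]^{W_{\mathfrak c}}\cong k[{\mathfrak g}(1)]^{G(0)}$ to be polynomial, and the converse Shephard--Todd--Chevalley theorem (applicable since $p$ is good, so $p\nmid|W_{\mathfrak c}|$) then yields that $W_{\mathfrak c}$ is generated by pseudoreflections. The main obstacle throughout is this transition to positive characteristic in (c) and (d): controlling separability of the quotient map and equidimensionality of the nullcone without the usual characteristic-zero tools is exactly what the standard hypotheses are designed to supply.
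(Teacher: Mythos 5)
You should first note that the paper does not actually prove this lemma: its ``proof'' is a list of citations (Vinberg's Thms.~1, 7, 8 and \S 3.1, together with Thm.~2.5, Cor.~2.6, Lemma 2.7, Thm.~2.20 and Prop.~4.22 of \cite{thetagroups}), so you are reconstructing arguments the paper outsources. Your reconstruction of (a) has a genuine gap at its foundation: you reduce everything to the claim that the generic element of ${\mathfrak g}(1)$ is semisimple, ``from the Hilbert--Mumford/Kempf--Rousseau description of the nullcone.'' But that theory only identifies the nullcone with the nilpotent elements; density of its complement gives elements with \emph{nonzero semisimple part}, not semisimple elements. The semisimple locus is $G(0)\cdot{\mathfrak c}$, of dimension $\dim G(0)-\dim Z_{G(0)}(x)+r$ for generic $x\in{\mathfrak c}$, and this is strictly less than $\dim{\mathfrak g}(1)$ whenever ${\mathfrak z}_{\mathfrak g}(x)\cap{\mathfrak g}(1)\neq{\mathfrak c}$, i.e.\ whenever the Levi $Z_G({\mathfrak c})$ has a nontrivially graded derived subgroup. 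Your asserted equality $\dim G(0)\cdot{\mathfrak c}=\dim{\mathfrak g}(1)$ fails for every zero-rank case with ${\mathfrak g}(1)\neq 0$ (e.g.\ the Kac diagram $00010$ in $F_4$ in this very paper, where a \emph{nilpotent} orbit is dense in ${\mathfrak g}(1)$), and also in positive rank: for $G=\SL(5,k)$ and $\theta=\Int\diag(1,1,\omega,\omega,\omega^2)$ of order $3$, the generic element of ${\mathfrak g}(1)$ (a cyclic-quiver representation with dimension vector $(2,2,1)$) has a Jordan block of size $2$ at the eigenvalue $0$, so semisimple elements are nowhere dense even though the rank is positive. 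Consequently the ``two dense saturated sets meet'' argument for conjugacy collapses. The cited proofs use \emph{closedness} of semisimple orbits, not density (a Richardson-type closed-orbit/minimization argument, resp.\ conjugacy of maximal $\theta$-split tori in positive characteristic), and that is the substitution your proof needs.

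Two secondary points. In (b), the step ``$x\in\Lie(T)$, whence ${\mathfrak c}\subseteq{\mathfrak t}(1)$'' is unjustified: a generic $x\in{\mathfrak c}$ need not be regular in ${\mathfrak g}$, $Z_G(x)=Z_G({\mathfrak c})$ is in general a nonabelian Levi, and an arbitrary $\theta$-stable maximal torus through $x$ need not contain ${\mathfrak c}$; one must choose $T$ as the paper does in Sect.~4, namely $T=Z_G(T(0))\cap Z_G({\mathfrak c})$ with $T(0)$ maximal in $Z_{G(0)}({\mathfrak c})$, which is exactly the content of \cite[Lemma 2.7, Lemma 4.1]{thetagroups} (your $\mathbb{Z}[\zeta]$-module computation of $\dim T_1=r\varphi(m)$, on the other hand, is sound and matches the paper's remark on the Brou\'e--Malle--Michel description). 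In (d), the chain ``equidimensional nullcone $\Rightarrow$ $k[{\mathfrak g}(1)]$ free over $k[{\mathfrak g}(1)]^{G(0)}$ $\Rightarrow$ polynomial'' is circular: the regular-sequence argument for freeness presupposes that the invariant ring is generated by $r$ elements, essentially what you are trying to prove; also $\kappa$ identifies ${\mathfrak g}(1)^*$ with ${\mathfrak g}(m-1)$, not ${\mathfrak g}(1)$ with its own dual, and ``$p$ good $\Rightarrow p\nmid|W_{\mathfrak c}|$'' is false in general ($p=7$ is good for $E_8$ and divides $|W(E_8)|$), though harmless for the $G_2$, $F_4$, $D_4$ cases at hand.
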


\begin{proof}
Part (a) is \cite[Thm. 1]{vin}, and \cite[Thm. 2.5 and Cor. 2.6]{thetagroups}.
For (b), see \cite[\S 3.1]{vin} and \cite[Lemma 2.7]{thetagroups}.
Part (c) was \cite[Th. 7]{vin}, \cite[Thm. 2.20]{thetagroups}.
Finally, part (d) is Thm. 8 in \cite{vin} and \cite[Prop. 4.22]{thetagroups}.
\end{proof}

We recall the following results of Steinberg \cite[7.5,9.16]{steinberg}.

\begin{lemma}
(a) If $\theta$ is a semisimple automorphism of $G$ then there is a $\theta$-stable Borel subgroup $B$ of $G$ and a $\theta$-stable maximal torus $T$ of $B$.

(b) Let $\pi:\hat{G}\rightarrow G$ be the universal covering of $G$.
Then there exists a unique automorphism $\hat\theta$ of $\hat{G}$ such that $\pi(\hat\theta(g))=\theta(\pi(g))$ for all $g\in \hat{G}$.
Moreover, if $\theta$ is of order $m$ then so is $\hat\theta$.
\end{lemma}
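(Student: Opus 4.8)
The plan is to treat the two parts separately, as they rest on different ideas.

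For (a), I would first replace $\theta$ by an honest semisimple element of a linear algebraic group. Since $\theta$ has finite order $m$ with $p\nmid m$, the semidirect product $\Gamma=G\rtimes\mu_m$ (with the generator of $\mu_m$ acting as $\theta$) is a linear algebraic group with identity component $G$, and the element $s=\theta\in\Gamma$ is semisimple of order $m$. A Borel $B$ of $G$ is $\theta$-stable precisely when $s$ fixes the point $B$ in the complete flag variety $\mathcal{B}$ of $G$, so the goal becomes producing a fixed point of $s$ on $\mathcal{B}$. To strip off the toral part of $\theta$, I would pass to the Zariski closure $D=\overline{\langle s\rangle}$, which is diagonalizable with identity component a torus $D^\circ\subseteq G$; since $\mathcal{B}^{D^\circ}$ is the (nonempty, complete) flag variety of the $s$-stable reductive group $L=Z_G(D^\circ)$, and $D^\circ$ is central in $L$, this reduces the problem to the case where $\langle\theta\rangle$ is finite of order prime to $p$.

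The remaining step --- finding an $s$-fixed Borel when $s$ has finite order prime to $p$ --- is where the real difficulty lies, and it is exactly the substance of Steinberg's theorem. A finite group acting on a complete variety need not have a fixed point, so one cannot argue by abstract completeness; instead I would exploit the group structure, deducing the fixed Borel from the reductivity of $(G^\theta)^\circ$ (Steinberg [9.16]) together with the theory of quasi-semisimple elements in the disconnected group $\Gamma$. I expect this to be the main obstacle.

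Once a $\theta$-stable Borel $B$ is in hand, producing a $\theta$-stable maximal torus inside it is routine. The maximal tori of $B$ form a single orbit under conjugation by $R_u(B)$ with trivial stabilizers, so they constitute a $\theta$-equivariant torsor under $R_u(B)$, and a $\theta$-stable maximal torus is exactly a $\theta$-fixed point of this torsor. The obstruction to such a fixed point lies in $H^1(\mu_m,R_u(B))$, which vanishes because $R_u(B)$ is split unipotent (an iterated extension of copies of the additive group) and $m$ is invertible in $k$, so averaging over $\langle\theta\rangle$ kills the cohomology. This yields the desired $\theta$-stable $T\subseteq B$.

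For (b), I would argue by the uniqueness of the universal cover. The composite $\theta\circ\pi:\hat{G}\to G$ is again a central isogeny from the simply connected group $\hat{G}$, hence $(\hat{G},\theta\circ\pi)$ is a universal covering of $G$. By the universal property there is a unique isomorphism $\hat\theta:\hat{G}\to\hat{G}$ with $\pi\circ\hat\theta=\theta\circ\pi$. Uniqueness is immediate: if $\hat\theta,\hat\theta'$ both lift $\theta$, then $g\mapsto\hat\theta(g)\hat\theta'(g)^{-1}$ maps $\hat{G}$ into the finite central subgroup $\ker\pi$, hence is trivial since $\hat{G}$ is connected. Finally $\hat\theta^{\,m}$ lifts $\theta^m=\mathrm{id}$, so by the same uniqueness it equals $\mathrm{id}$; and no smaller power can be trivial, since projecting along $\pi$ would force $\theta$ to have order less than $m$. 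Hence $\hat\theta$ has order exactly $m$.
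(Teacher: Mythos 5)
First, a point of comparison: the paper does not prove this lemma at all --- it is quoted from Steinberg's memoir, part (a) being his Theorem 7.5 and part (b) his 9.16 --- so the question is whether your argument genuinely reproves these. Part (b) does: the observation that $\theta\circ\pi$ is again a central isogeny from a simply connected group, so that the universal property yields a unique isomorphism $\hat\theta$ with $\pi\circ\hat\theta=\theta\circ\pi$, together with your connectedness argument for uniqueness (a morphism from the connected $\hat{G}$ into the finite central subgroup $\ker\pi$ sending $e$ to $e$ is constant) and the resulting computation of the order, is complete and is in substance the standard argument. The last step of (a) is also sound: the maximal tori of a $\theta$-stable $B$ form a torsor under $R_u(B)$ (stabilizers are trivial because $N_B(T)=T$), and $H^1(\mu_m,R_u(B))$ vanishes by averaging, run through a $\theta$-stable filtration of $R_u(B)$ with vector-group quotients since $R_u(B)$ is in general nonabelian, using that $m$ is invertible in $k$.

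The gap is at the heart of (a): the existence of a $\theta$-stable Borel subgroup, which you explicitly defer. This is not a citable side step --- it is the entire content of Steinberg's 7.2/7.5, i.e.\ of the statement being proved --- and the route you sketch for it is circular. ``Quasi-semisimple'' means, by definition, stabilizing a pair $B\supset T$, so appealing to ``the theory of quasi-semisimple elements'' presupposes part (a); and the reductivity of $(G^\theta)^\circ$ is Steinberg's 8.1 (the paper cites it as such elsewhere; 9.16 is the isogeny-lifting statement of part (b)), which Steinberg deduces \emph{from} the existence of a stable pair, not the other way around. Your preliminary reduction is moreover vacuous in this paper's setting: $\theta$ already has finite order $m$ prime to $p$, so the element $s=\theta$ of $G\rtimes\mu_m$ generates a finite subgroup, $D=\overline{\langle s\rangle}=\langle s\rangle$ is finite, $D^\circ$ is trivial, and $L=G$; nothing has been stripped off. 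To close the gap you must either reproduce Steinberg's actual induction or substitute an independent argument; one such is a Lefschetz count on the flag variety $\mathcal{B}$: since the connected group $G$ acts trivially on $\ell$-adic cohomology, $\theta^*$ permutes the Schubert basis of the even-degree cohomology, so its traces are nonnegative and equal $1$ on $H^0$, whence the Lefschetz number is at least $1$; as $\theta$ has finite order prime to $p$, this forces $\mathcal{B}^\theta\neq\emptyset$. As written, though, part (a) of your proposal records the reduction of an already-reduced problem and leaves the theorem itself unproved.
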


%Let $G^\theta_Z=\{ g\in G\, \mid\,g^{-1}\theta(g)\in Z(G)\}$.
%Denote by $W^Z_{\mathfrak c}$ the group $N_{G_Z^\theta}({\mathfrak c})/Z_{G_Z^\theta}({\mathfrak c})$.

Let $G$ be a simple, simply-connected group and let ${\mathfrak g}=\Lie(G)$.
Denote by $\Aut G$ (resp. $\Aut{\mathfrak g}$) the algebraic group of rational (resp. restricted Lie algebra) automorphisms of $G$ (resp. ${\mathfrak g}$).
In characteristic zero it is well-known that all automorphisms of ${\mathfrak g}$ arise as differentials of automorphisms of $G$, and the corresponding map $\Aut G\rightarrow\Aut{\mathfrak g}$ is bijective.
This may fail to be true in small characteristic \cite{hogeweij}.

\begin{lemma}
If $p>2$ then differentiation $d:\Aut G\rightarrow\Aut{\mathfrak g}$ is bijective.
\end{lemma}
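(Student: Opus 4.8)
\emph{Strategy.} The plan is to compare the two algebraic groups through $d$, proving injectivity by hand and surjectivity by a Chevalley-style reduction. Recall that, since $G$ is simple and simply-connected, $\Aut G$ is an algebraic group whose identity component is the group $\Int G\cong G/Z(G)$ of inner automorphisms and whose group of components is the group $\Gamma$ of symmetries of the Dynkin diagram, realized by the automorphisms preserving a fixed pinning $(T,B,\{e_\alpha\})$; thus $\Aut G=\Int G\rtimes\Gamma$. The map $d$ is a morphism of algebraic groups carrying $\Int G$ onto $\Ad(G)$ and a graph automorphism to its differential, which is again a graph automorphism of $\mathfrak g$. Throughout I use that, since $p>2$, the Cartan subalgebra $\mathfrak t=\Lie T$ contains a regular semisimple element, so that $T=Z_G(\mathfrak t)^\circ$ is recovered from $\mathfrak t$, and that the Cartan subalgebras of $\mathfrak g$ are exactly the Lie algebras of maximal tori and are $\Ad(G)$-conjugate.

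\emph{Injectivity.} Suppose $\phi\in\Aut G$ with $d\phi=\Id$. Since $d\phi$ fixes $\mathfrak t$ pointwise, $\Lie(\phi(T))=\mathfrak t$, whence $\phi(T)=T$; moreover the action induced by $\phi$ on $Y(T)$ is trivial because $Y(T)\hookrightarrow\mathfrak t$, so $\phi|_T=\Id$. As $d\phi(\mathfrak g_\alpha)=\mathfrak g_\alpha$, each root subgroup $U_\alpha$ (the one with Lie algebra $\mathfrak g_\alpha$) is preserved, and $\phi|_{U_\alpha}$ is $T$-equivariant of weight $\alpha$, hence of the form $x_\alpha(s)\mapsto x_\alpha(cs)$ with $c=d\phi|_{\mathfrak g_\alpha}=1$ (the Frobenius twists are excluded because they change the weight to $p\alpha\neq\alpha$). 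Since $G$ is generated by $T$ and the $U_\alpha$, we get $\phi=\Id$, so $d$ is injective.

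\emph{Surjectivity.} Let $\psi\in\Aut\mathfrak g$. Then $\psi(\mathfrak t)$ is a Cartan subalgebra, so after composing with a suitable element of $\Ad(G)$ we may assume $\psi(\mathfrak t)=\mathfrak t$. Now $\psi$ permutes the roots and induces an element of $\Aut(\Phi)=W\rtimes\Gamma$; composing further with a representative in $N_G(T)$ of the Weyl part and with the differential of a graph automorphism, we reduce to the case in which $\psi$ fixes $\Delta$ and stabilizes each $\mathfrak g_\alpha$. Then $\psi(e_\alpha)=c_\alpha e_\alpha$ for scalars $c_\alpha\in k^\times$ and $\psi|_{\mathfrak t}=\Id$ (because $\psi$ fixes every root, and $\Phi$ spans $\mathfrak t^*$). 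Applying $\psi$ to $[e_\alpha,e_{-\alpha}]=h_\alpha$ and to $[e_\alpha,e_\beta]=N_{\alpha\beta}e_{\alpha+\beta}$ gives $c_\alpha c_{-\alpha}=1$ and, whenever $N_{\alpha\beta}\neq 0$ in $k$, the relation $c_{\alpha+\beta}=c_\alpha c_\beta$. Granting multiplicativity, $c$ defines a homomorphism from the root lattice to $k^\times$; since $k^\times$ is divisible it extends to $X(T)$, so there is $t\in T$ with $\alpha(t)=c_\alpha$ for all $\alpha$, and then $\Ad(t)^{-1}\psi$ fixes $\mathfrak t$ and every $e_\alpha$, hence equals $\Id$. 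Thus $\psi\in\Ad(G)\cdot d(\Gamma)\subseteq d(\Aut G)$, so $d$ is onto.

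\emph{Main obstacle.} The delicate point is the multiplicativity of the scalars $c_\alpha$: building up a positive root from simple roots requires the intermediate structure constants $N_{\alpha\beta}$ to be nonzero modulo $p$, which can fail in the bad characteristics still permitted here ($p=3$ in types $F_4,G_2,D_4$, and $p=3,5$ in type $E$ for the companion paper). Equivalently, one must rule out ``extra'' automorphisms of $\mathfrak g$, i.e.\ one needs the computation of $\Der\mathfrak g$ and of $\Aut\mathfrak g$ from \cite{hogeweij}, which shows that for $p>2$ no such extra automorphisms appear. This is exactly where the hypothesis $p>2$ is essential: in characteristic $2$ the derivation algebra genuinely grows and $d$ is no longer surjective. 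A type-by-type verification that the relevant $N_{\alpha\beta}$ survive (equivalently that $\dim\Der\mathfrak g=\dim G$) is what makes the argument valid for all $p>2$.
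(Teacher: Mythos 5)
Your proposal, at its decisive point, coincides with the paper's proof: the paper's entire argument consists of quoting Hogeweij's computation that for $p>2$ the automorphism group of $\mathfrak g$ as an abstract Lie algebra is $\Int G\rtimes{\cal D}$ \cite[II. Table 1]{hogeweij} (together with \cite[Lemma 1.4]{invs} for $G=\SL(n,k)$, which is the delicate case of type $A$ with $p\mid n+1$), and your surjectivity half is exactly this citation with the Chevalley-style reduction written out in front of it. Your diagnosis of the obstacle is accurate and worth having on record: the multiplicativity of the scalars $c_\alpha$ really does fail through vanishing structure constants (e.g.\ $N_{\alpha\beta}=\pm3$ in type $G_2$ when $p=3$), so the classical characteristic-zero argument cannot be closed without type-by-type input, and it is correct of you to delegate precisely that step to \cite{hogeweij}. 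One smaller caveat in the same half: conjugacy of Cartan subalgebras is not the characteristic-zero statement either; one should use that $\psi$ is a \emph{restricted} automorphism (as in the paper's definition of $\Aut{\mathfrak g}$), so that $\psi({\mathfrak t})$ is a maximal torus of $\mathfrak g$, and invoke conjugacy of maximal tori of $\Lie(G)$ — but this, too, is subsumed by the citation.

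The one step that is genuinely false as written is in your injectivity argument: $Y(T)$ does \emph{not} embed in $\mathfrak t$ in characteristic $p$. The map $\lambda\mapsto d\lambda(1)$ factors through $Y(T)\otimes_{\mathbb Z}k$ and has kernel $pY(T)$, so from $d\phi|_{\mathfrak t}=\Id$ you may conclude only that the induced automorphism $\phi_*$ of $Y(T)$ is congruent to the identity modulo $p$. The repair: since $\phi(T)=T$, the map $\phi_*$ permutes the coroots, hence lies in the finite group $\Aut(\Phi^\vee)$, so it has finite order; and for $p\geq 3$ the kernel of the reduction $\GL(Y(T))\rightarrow\GL\bigl(Y(T)\otimes{\mathbb F}_p\bigr)$ contains no nontrivial element of finite order (Minkowski's lemma). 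This is in fact exactly where $p>2$ enters the injectivity half — for $p=2$ the step fails, since $-\Id\equiv\Id\bmod 2$ — so the correction is not cosmetic. With that fix, and the appeal to \cite{hogeweij} you already make, your argument is complete and is in substance the paper's proof with the reductions made explicit.
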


\begin{proof}
If $G=\SL(n,k)$ then this was proved in \cite[Lemma 1.4]{invs}.
But if $p>2$ then the automorphism group of ${\mathfrak g}$ as an abstract Lie algebra is isomorphic to $\Int G\rtimes{\cal D}$, where ${\cal D}$ is the group of `graph automorphisms' of the Dynkin diagram of $G$ \cite[II. Table 1]{hogeweij}.
Thus any automorphism of ${\mathfrak g}$ is the differential of an automorphism of $G$, and the map $d:\Aut G\rightarrow\Aut{\mathfrak g}$ is bijective.
\end{proof}

%We will also require the following straightforward result.

%\begin{lemma}
%Suppose $G$ is simple and simply-connected.
%If $\Phi_1$ is a root subsystem of $\Phi(G,T)$ and $\Delta_1$ is a basis of $\Phi_1$, then the elements $h_\alpha$ with $\alpha\in\Delta_1$ are linearly independent.
%\end{lemma}

\section{Kac diagrams}

Here we recall Kac's classification \cite{kacautos} of periodic automorphisms of simple Lie algebras in characteristic zero, and show that the classification extends to positive characteristic (considering only those automorphisms of $\Lie(G)$ which arise as differentials of rational automorphisms of $G$, and only automorphisms of order coprime to the characteristic).
For more details on the classification, we recommend chapters 6 to 8 of Kac's book \cite{kacbook}.
Let $G$ be almost simple and simply-connected,  let $\Delta=\{Ê\alpha_1,\ldots ,\alpha_r\}$ be a basis of the root system $\Phi(G,T)$ and let $\tilde\Delta=\Delta\cup\{\alpha_0\}$ be a basis of the extended root system.
Let $\{ h_{\alpha},e_\beta\, |\, \alpha\in\Delta, \beta\in\Phi\}$ be a Chevalley basis for $G$.
%(In fact if $k$ is of positive characteristic then the $h_\alpha$, $\alpha\in\Delta$ aren't necessarily linearly independent, see \cite[Rk. 1.3]{thetagroups}.
%In what follows below this won't be a problem, but we stress that the assumption that the Chevalley elements $h_\alpha$, $\alpha\in\Delta$ are linearly independent.
%Since we will only be considering automorphisms of ${\mathfrak g}$ which arise as differentials of automorphisms of $G$, this will .)
Recall that $\Aut G$ is generated over $\Int G$ by finitely many (graph) automorphisms, in particular $\Aut G/\Int G$ is finite.
If $\theta$ is a (rational) automorphism of $G$ then the {\it index} of $\theta$ is the order of $\theta$ modulo the inner automorphisms.

For the time being, let $k={\mathbb C}$ and let $\zeta=e^{2\pi i/m}$.
To each automorphism $\sigma$ of ${\mathfrak g}$ of (finite) order $m$, Kac associated an infinite-dimensional Lie algebra, the {\it twisted loop algebra}:
$$L({\mathfrak g},\sigma)=\sum_{j\in{\mathbb Z}}t^i\otimes{\mathfrak g}_j\subset{\mathbb C}[t,t^{-1}]\otimes{\mathfrak g}$$ where ${\mathfrak g}_j=\{ x\in{\mathfrak g}\, |\, \sigma(x)=\zeta^j x\}$.
Kac showed that twisted loop algebras are closely related to affine type Lie algebras.
To explain this, let $\widehat{\mathfrak g}^{(s)}$ denote the affine type Lie algebra associated to ${\mathfrak g}$ and $s$ (for $s$ the index of some automorphism of ${\mathfrak g}$): the Dynkin diagram of $\widehat{\mathfrak g}^{(1)}$ is just the extended Dynkin diagram of ${\mathfrak g}$; the Dynkin diagrams for the ``twisted'' types $s>1$ are given below.
It is well known that $\widehat{\mathfrak z}^{(s)}={\mathfrak z}(\widehat{\mathfrak g}^{(s)})$ is one-dimensional and contained in the derived subalgebra $(\widehat{\mathfrak g}^{(s)})'$ (which is of codimension 1 in $\widehat{\mathfrak g}^{(s)}$).
Identify a basis of simple roots for $\widehat{\mathfrak g}^{(1)}$ with $\tilde\Delta$.
Let $\widehat\Phi$ denote the root system of $\widehat{\mathfrak g}^{(s)}$.

\begin{theorem}
a) If $\sigma$ and $\theta$ are two periodic automorphisms of ${\mathfrak g}$ of the same index, then $L({\mathfrak g},\sigma)\cong L({\mathfrak g},\theta)$.
In particular, if $\sigma$ is inner then $L({\mathfrak g},\sigma)\cong L({\mathfrak g},1)={\mathbb C}[t,t^{-1}]\otimes{\mathfrak g}$.

b) $L({\mathfrak g},\sigma)\cong (\widehat{\mathfrak g}^{(s)})'/\widehat{\mathfrak z}^{(s)}$, where $s$ is the index of $\sigma$.

c) The grading of $L({\mathfrak g},\sigma)$ given by $L({\mathfrak g},\sigma)_i=t^i\otimes{\mathfrak g}_i$ induces a grading of $\widehat{\mathfrak g}^{(s)}$.
There is a Cartan subalgebra $\widehat{\mathfrak h}$ of $\widehat{\mathfrak g}^{(s)}$, a basis $\Pi=\{\beta_0,\ldots ,\beta_l\}$ of simple roots in $\widehat\Phi$ and a sequence $(n_0,\ldots ,n_l)$ of non-negative integers such that the grading of $\widehat{\mathfrak g}^{(s)}$ is the grading obtained by putting $\widehat{\mathfrak h}$ in degree zero and $\widehat{\mathfrak g}^{(s)}_{\pm\beta_i}$ in degree $\pm n_i$.

d) In the notation of (c), the order of $\sigma$ is $m=s(\sum_{j=0}^l a_jn_j)$, where $s\delta=s\sum_{j=0}^l a_j\beta_j$ is the shortest positive imaginary root in $\widehat\Phi$.

e) Given a grading of $\widehat{\mathfrak g}^{(s)}$ determined by a sequence $(n_0,\ldots ,n_l)$ of non-negative integers as in (c) there is an induced ${\mathbb Z}/m{\mathbb Z}$-grading of ${\mathfrak g}$ given by the isomorphism ${\mathfrak g}\cong L({\mathfrak g},\sigma)/(t^m,t^{-m})L({\mathfrak g},\sigma)$.%\cong (\widehat{\mathfrak g}^{(s)})'/(\widehat{\mathfrak z}\oplus\sum_{j\neq 0}\widehat{\mathfrak g}^{(s)}_{js\delta})$.
\end{theorem}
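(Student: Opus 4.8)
The plan is to realise both sides as affine Kac--Moody algebras (derived-mod-centre) and to match up their gradings. I would begin with part (b), which is essentially the construction of $\widehat{\mathfrak g}^{(s)}$. Fix a standard order-$s$ diagram automorphism $\mu$ of ${\mathfrak g}$ (taking $\mu=1$ when $s=1$); then $\widehat{\mathfrak g}^{(s)}$ is built as the sum of the twisted loop algebra $L({\mathfrak g},\mu)$, a one-dimensional centre and the degree derivation, so that $(\widehat{\mathfrak g}^{(s)})'/\widehat{\mathfrak z}^{(s)}\cong L({\mathfrak g},\mu)$ directly from the definition. Part (b) in full generality then reduces to part (a).

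The heart of the argument is part (a): that $L({\mathfrak g},\sigma)$, up to isomorphism, depends only on the index $s$ of $\sigma$. I would proceed in two steps. First, put $\sigma$ into normal form: using a $\sigma$-stable Cartan subalgebra and Borel (the Lie algebra analogue of the Steinberg lemma quoted above) one writes $\sigma=\mu\cdot\exp(\ad h)$, where $\mu$ is a diagram automorphism of order $s$ commuting with the chosen data and $h$ lies in the $\mu$-fixed part of the Cartan. Second, I would gauge away the inner factor $\exp(\ad h)$: conjugating $\sigma$ by $\Int(g)$ induces an isomorphism $t^{j}\otimes x\mapsto t^{j}\otimes (g\cdot x)$ of loop algebras, while the remaining toral factor can be removed by a change of the loop variable combined with an inner twist depending linearly on the degree $j$. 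The conclusion is $L({\mathfrak g},\sigma)\cong L({\mathfrak g},\mu)$, depending only on $s$; the special case of inner $\sigma$ then gives $L({\mathfrak g},1)={\mathbb C}[t,t^{-1}]\otimes{\mathfrak g}$.

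With (a) and (b) in hand, parts (c)--(e) become a transfer of gradings. The loop grading $L({\mathfrak g},\sigma)_i=t^i\otimes{\mathfrak g}_i$ extends to a ${\mathbb Z}$-grading of $\widehat{\mathfrak g}^{(s)}$ by placing the centre and the derivation in degree $0$; this is a grading by a semisimple (ad-diagonalisable) derivation. Invoking the conjugacy of Cartan subalgebras in the affine Kac--Moody setting, I would move this derivation, up to $\Aut\widehat{\mathfrak g}^{(s)}$, into the affine Cartan $\widehat{\mathfrak h}$, so that it is prescribed by non-negative integer labels $(n_0,\ldots ,n_l)$ on a base $\Pi=\{\beta_0,\ldots ,\beta_l\}$ of $\widehat\Phi$; this is (c). Part (d) is then the computation that the period of $\sigma$ equals the value of the grading derivation on the class generating the imaginary roots: since $s\delta=s\sum_{j=0}^l a_j\beta_j$ is the shortest positive imaginary root and corresponds to one full turn of the loop variable, evaluating the degree on it yields $m=s(\sum_{j=0}^l a_jn_j)$. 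Finally, for (e) I would run the construction backwards: any sequence of non-negative integers defines a grading derivation $D$ of $\widehat{\mathfrak g}^{(s)}$, hence of $L({\mathfrak g},\mu)$, and $\sigma=\exp(\tfrac{2\pi i}{m}D)$ recovers an order-$m$ automorphism of ${\mathfrak g}$ via ${\mathfrak g}\cong L({\mathfrak g},\sigma)/(t^m,t^{-m})L({\mathfrak g},\sigma)$, inverse to the passage from $\sigma$ to its labels.

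The main obstacle is part (a), and specifically the second step of gauging away the inner factor: one must show that replacing $\sigma$ by any automorphism in the same coset modulo $\Int G$ yields an isomorphic loop algebra. Concretely this is the statement that forms of the loop algebra over ${\mathbb C}[t,t^{-1}]$ are classified by the outer class alone; it can be handled either by the explicit deformation/normal-form argument sketched above or cohomologically, noting that the relevant $H^1$ collapses to its outer part. The conjugacy of Cartan subalgebras of $\widehat{\mathfrak g}^{(s)}$, needed for (c), is the other non-trivial input, but it is a standard theorem in affine Kac--Moody theory which I would simply cite.
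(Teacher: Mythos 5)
Your proposal is correct and follows essentially the same route as the source the paper relies on: the paper states this theorem without proof, citing Kac \cite{kacautos} and referring to chapters 6--8 of \cite{kacbook}, where exactly your argument appears --- realization of $\widehat{\mathfrak g}^{(s)}$ via the twisted loop algebra of a diagram automorphism, normal form $\sigma=\mu\exp(\ad h)$ and gauging away the inner factor by the degree-dependent substitution $t^{j}\otimes x\mapsto t^{j+m\alpha(h)}\otimes x$ (integrality following from $\sigma^{m}=1$), then transfer of the grading into the affine Cartan by conjugacy and an affine Weyl group translation into the fundamental alcove (cf.\ Remark \ref{fundrk}(a)). The only caveat is cosmetic: your phrase ``directly from the definition'' in part (b) presupposes the loop realization of the abstractly defined $\widehat{\mathfrak g}^{(s)}$, which is itself a theorem (Kac's Theorems 7.4 and 8.3) rather than a definition, so it should be cited rather than asserted.
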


\begin{rk}\label{fundrk}
a) Another way to express (c) is that the grading of ${\mathbb Z}\Pi$, considered as an element of the affine apartment, is conjugate by an element of the affine Weyl group to a point in either the fundamental alcove or its opposite.

b) It is easy to show that $\widehat{\mathfrak z}=\{ h\in\widehat{\mathfrak h}\,|\langle\alpha,h\rangle=0\;\mbox{for all}\;\alpha\in\widehat\Phi\}$.
Dually, the space $\{ \chi\in\widehat{\mathfrak h}^*\, |\, \langle\chi,\alpha^\vee\rangle=0\;\mbox{for all}\;\alpha\in\widehat\Phi\}$ is one-dimensional, generated by $\delta$.
If $s=1$ then $\delta=\alpha_0+\sum a_i\alpha_i$, where $\sum a_i\alpha_i=\hat\alpha$ is the highest root of the (finite) root system $\Phi$.
For $s\neq 1$ we mark the coefficients $a_j$ on the diagrams below.
\end{rk}

%One endows ${\mathfrak G}$ with a structure of $k[u,u^{-1}]$ algebra by: $u\cdot x=t^m x$.
%Then Kac showed that the $k[u,u^{-1}]$-algebra ${\mathfrak G}$ (without its ${\mathbb Z}$-grading) depends only on the type of ${\mathfrak g}$ and the index of $\theta$.
%In fact, ${\mathfrak G}$ is the derived subalgebra of a Kac-Moody Lie algebra of affine type: if $\theta$ is an inner automorphism then ${\mathfrak G}$ is the Lie algebra associated to the extended Dynkin diagram of $G$, while if $\theta$ is outer then ${\mathfrak G}$ is (the derived subalgebra of) one of the ``twisted'' Kac-Moody algebras, with diagrams as follows (the index is indicated in parentheses in the superscript, thus $\tilde{A}_{2n}^{(2)}$ denotes the diagram obtained for an outer automorphism in type $A_{2n}$):
The ``twisted type'' Dynkin diagrams are as follows (type $\tilde{A}_3^{(2)}$ is given by $\tilde{D}_3^{(2)}$):

\begin{center}
\begin{picture}(200,35)
\put(0,20){\tildeAneventwoDynk}\put(-50,20){\makebox(0,0){$\tilde A_{2n}^{(2)}$ ($n\geq 2$)}}%
\end{picture}

\begin{picture}(200,35)
\put(0,20){\tildeAtwotwoDynk}\put(-50,20){\makebox(0,0){$\tilde A_2^{(2)}$}}%
\end{picture}

\begin{picture}(200,70)
\put(0,35){\tildeAnoddtwoDynk}\put(-50,35){\makebox(0,0){$\tilde A_{2n-1}^{(2)}$ ($n\geq 3$)}}%
\end{picture}

\begin{picture}(200,35)
\put(0,20){\tildeDntwoDynk}\put(-50,20){\makebox(0,0){$\tilde D_{n+1}^{(2)}$ ($n\geq 2$)}}%
\end{picture}

\begin{picture}(200,35)
\put(0,20){\tildeDfourthreeDynk}\put(-50,20){\makebox(0,0){$\tilde D_{4}^{(3)}$}}%
\end{picture}

\begin{picture}(200,35)
\put(0,20){\tildeEsixtwoDynk}\put(-50,20){\makebox(0,0){$\tilde E_{6}^{(2)}$}}%
\end{picture}
\end{center}

(The diagrams for $\tilde{A}_{2n}^{(2)}$ are not given in their usual orientation.
This choice of orientation is due to our `type-free' choice of graph automorphism, see the discussion below.)

The information $\Pi,(n_0,\ldots ,n_l)$ can be written in a diagram called a Kac diagram $\mu$, which is a copy of the Dynkin diagram of $\widehat{\mathfrak g}^{(s)}$ with the coefficient $n_i$ written above the vertex $\beta_i$.
The order of a Kac diagram $\mu$ is the sum $s(a_0n_0+\ldots +a_ln_l)$.
From now on, any Kac diagram $\mu$ will be primitive, that is, it won't be possible to write $\mu$ as $j\mu'$ for $j>1$ and $\mu'$ some other Kac diagram.
Then to each Kac diagram we can associate an automorphism (which we call a {\it Kac automorphism}) of ${\mathfrak g}$ of order $m$; Kac's theorem tells us that any automorphism of ${\mathfrak g}$ of order $m$ is conjugate (possibly by an element of the outer automorphism group of ${\mathfrak g}$) to a Kac automorphism of order $m$.

In the inner case it is easy to see what the Kac automorphism corresponding to $\mu$ is: it's $\Ad t$, where $t\in T$ is such that $\alpha_i(t)=\zeta^{\mu(\alpha_i)}$ for $1\leq i\leq r$.
(Here $\zeta=e^{2\pi i/m}$.)
Then it is immediate that $\hat\alpha(t)=\zeta^{-\mu(\alpha_0)}$ and that ${\mathfrak g}^t$ is generated by ${\mathfrak t}$ and all ${\mathfrak g}_{\pm\alpha_i}$, where $\mu(\alpha_i)=0$ ($0\leq i\leq r$).

\begin{example}
We describe all (classes of) automorphisms of order 3 of a Lie algebra of type $F_4$.
Here any automorphism is inner.
The possible Kac diagrams of order 3 are $00100$, $11000$ and $10001$.
Thus there are three classes of automorphisms of order 3, with representatives $\Int t_1$, $\Int t_2$, $\Int t_3$, $t_1,t_2,t_3\in T$ satisfying:

(i) $\alpha_1(t_1)=\alpha_3(t_1)=\alpha_4(t_1)=1$, $\alpha_2(t_1)=\zeta$;

(ii) $\alpha_2(t_2)=\alpha_3(t_2)=\alpha_4(t_2)=1$, $\alpha_1(t_2)=\zeta$; and

(iii) $\alpha_1(t_3)=\alpha_2(t_3)=\alpha_3(t_3)=1$, $\alpha_4(t_3)=\zeta$.
\end{example}

We wish to generalise this construction to positive characteristic $p$.
If $p\nmid m$ then we can do this by fixing a primitive $m$-th root $\zeta$ of unity, and continuing exactly as above.
Here, we need to specify that $\Int t$ is an automorphism of $G$ with differential $\Ad t$; while it is no longer true in general that ${\mathfrak g}^t$ is generated by ${\mathfrak t}$ and the subspaces ${\mathfrak g}_{\alpha_i}$ with $\mu(\alpha_i)=0$, it is nevertheless true that the subgroup $Z_G(t)^\circ$ is (reductive and is) generated by $T$ and all root subgroups $U_{\pm\alpha_i}$ (see, for example, \cite[26.3]{hum}) with $\mu(\alpha_i)=0$.

Now consider Kac diagrams of twisted type.
In order to extend Kac's classification to positive characteristic, we will first give a precise description of the Kac automorphism corresponding to each Kac diagram in characteristic zero, and then explain how to extend this description to positive characteristic.
For a given index $s$ let $\gamma$ be a graph automorphism of the root system $\Phi$ which preserves the basis $\Delta$, and by abuse of notation let $\gamma$ also denote the unique automorphism of ${\mathfrak g}$ which satisfies $\gamma(e_\alpha)=e_{\gamma(\alpha)}$ for $\alpha\in\pm\Delta$.
(For each index $s$ there is a unique such graph automorphism of $\Phi$ unless ${\mathfrak g}$ is of type $D_4$ and $s=3$; in this case there are two such $\gamma$, which are mutually inverse and conjugate under the action of the automorphism group of ${\mathfrak g}$.
In this case let us fix the choice satisfying $\gamma(\alpha_1)=\alpha_3$.)
Let $T(0)=(T^\gamma)^\circ$.
For each $\alpha\in\Phi$ let $(\alpha)$ be the set of all $\langle\gamma\rangle$-conjugates of $\alpha$ and let ${\mathfrak g}_{(\alpha)}=\sum_{\beta\in(\alpha)}{\mathfrak g}_\beta$.
Let $\overline\alpha$ be a root which is longest subject to ${\mathfrak g}_{(\alpha)}\not\subseteq {\mathfrak g}^\gamma$.
Specifically, we have: $\overline\alpha=\hat\alpha$ for type $\tilde{A}_{2n}^{(2)}$; $\overline\alpha=\sum_{i=1}^{2n-2}\alpha_i$ for type $\tilde{A}_{2n-1}^{(2)}$; $\overline\alpha=\sum_1^{n-1}\alpha_i$ in type $\tilde{D}_n^{(2)}$; $\overline\alpha=\alpha_1+\alpha_2+\alpha_3$ in type $\tilde{D}_4^{(3)}$; $\overline\alpha=\alpha_1+\alpha_2+\alpha_3+2\alpha_4+2\alpha_5+\alpha_6$ in type $\tilde{E}_6^{(2)}$.
Let $\beta_0$ be the $\langle\gamma\rangle$-conjugacy class of $-\overline\alpha$ and choose $E_{\pm\beta_0}\in{\mathfrak g}_{(\mp \overline\alpha)}$ as follows: in case $\tilde{A}_{2n}^{(2)}$ let $E_{\pm\beta_0}=e_{\mp\overline\alpha}$; in cases $\tilde{A}_{2n-1}^{(2)}$, $\tilde{D}_n^{(2)}$ and $\tilde{E}_6^{(2)}$ let $E_{\pm\beta_0}=e_{\mp\overline\alpha}-\gamma(e_{\mp\overline\alpha})$; in case $\tilde{D}_4^{(3)}$ let $E_{\beta_0}=e_{-\overline\alpha}+\sigma^{-1}\gamma(e_{-\overline\alpha})+\sigma\gamma^2(e_{-\overline\alpha})$ and $E_{-\beta_0}=e_{\overline\alpha}+\sigma\gamma(e_{\overline\alpha})+\sigma^{-1}\gamma^2(e_{\overline\alpha})$, where $\sigma=e^{2\pi i/3}$.
We label the node on the far left hand side (as displayed in the diagrams above; on the top in type $\tilde{A}^{(2)}_{2n-1}$) with $\beta_0$.
(As a character of $T(0)$, $\beta_0$ is just $(-\overline\alpha)|_{T(0)}$.)
Note that $[E_{\beta_0},E_{-\beta_0}]=H_{\beta_0}\in{\mathfrak t}^\gamma$ satisfies $[H_{\beta_0},E_{\pm\beta_0}]=\pm 2E_{\pm\beta_0}$.
The remaining nodes are all labelled by $\langle\gamma\rangle$-conjugacy classes in $\Delta$.
Specifically, we label the nodes $\beta_1,\ldots$ from left to right:

 - $\beta_i=\{\alpha_i,\alpha_{2n+1-i}\}$ for each $i$, $1\leq i\leq n$ in type $\tilde{A}_{2n}^{(2)}$, $n\geq 1$,
 
 - $\beta_i=\{\alpha_i,\alpha_{2n-i}\}$ for $1\leq i\leq n-1$ ($\beta_1$ appearing underneath $\beta_0$) and $\beta_n=\{\alpha_n\}$ in type $\tilde{A}_{2n-1}^{(2)}$,

 - $\beta_i=\{\alpha_i\}$ for $1\leq i\leq n-1$ and $\beta_n=\{\alpha_n,\alpha_{n+1}\}$ in type $\tilde{D}_{n+1}^{(2)}$,

 - $\beta_1=\{\alpha_1,\alpha_3,\alpha_4\}$ and $\beta_2=\{\alpha_2\}$ in type $\tilde{D}_4^{(3)}$,

 - $\beta_1=\{\alpha_1,\alpha_6\}$, $\beta_2=\{\alpha_3,\alpha_5\}$, $\beta_3=\{\alpha_4\}$ and $\beta_4=\{\alpha_2\}$ in type $\tilde{E_6}^{(2)}$.

We can set $E_{\pm\beta_i}=\sum_{\alpha\in\beta_i}e_{\pm\alpha}$ for each $i$, $1\leq i\leq n$ except for $\beta_n$ in $\tilde{A}_{2n}^{(2)}$ in which case ($p\neq 2$ and) we set $E_{\beta_n}=e_{\alpha_n}+e_{\alpha_{n+1}}$, $E_{-\beta_n}=2e_{-\alpha_n}+2e_{-\alpha_{n+1}}$.
Note that (for $0\leq i\leq n$) $\beta_i$ is a well-defined character on $T(0)$ and $\Ad t(E_{\beta_i})=\beta_i(t) E_{\beta_i}$ for any $t\in T(0)$.
Moreover, setting $H_{\beta_i}=[E_{\beta_i},E_{-\beta_i}]$ and letting $\langle\beta_i,\beta_j\rangle$ be the Cartan numbers determined by the twisted diagram, it is easy to establish the following commutation relations:
$$[E_{\beta_i},E_{-\beta_j}]=\left\{\begin{array}{ll} 0 & \mbox{if $i\neq j$,} \\ H_{\beta_i} & \mbox{if $i=j$,} \end{array}\right. \;[H_{\beta_i},E_{\pm\beta_j}]=\pm\langle\beta_i,\beta_j\rangle E_{\pm\beta_j},\;[E_{\pm\beta_i},E_{\pm\beta_j}]=0\,\,\mbox{if $\langle\beta_i,\beta_j\rangle=0$}$$
and $(\ad E_{\beta_i})^{1-\langle\beta_i,\beta_j\rangle}(E_{\beta_j})=(\ad E_{-\beta_i})^{1-\langle\beta_i,\beta_j\rangle}(E_{-\beta_j})=0$ for $i\neq j$.
In addition, $\gamma(H_{\beta_i})=H_{\beta_i}$ for all $i$ and $\gamma(E_{\pm\beta_i})=E_{\pm\beta_i}$ for $1\leq i\leq n$.
Thus, if $I$ is a proper subset of $\Pi$ then the elements $E_{\pm\beta_i}$, $\beta_i\in I$ and ${\mathfrak t}^\gamma$ together generate a reductive subalgebra of ${\mathfrak g}$ with root system given by the subdiagram of the twisted affine diagram which contains exactly the vertices corresponding to elements of $I$.
With this set-up, let $\mu$ be a Kac diagram of twisted affine type, of order $m$.
(The order of $\mu$ is $s\sum_{i=0}^n \mu(\beta_i)m_i$, where $m_i$ are the coefficients indicated on the twisted diagrams above.)
There exists $t\in T(0)$ such that $\beta_i(t)=\zeta^{\mu(\beta_i)}$ for $1\leq i\leq n$.
Let $\theta_\mu=\Ad t\circ\gamma$: then $\theta_\mu$ is of order $m$, $\theta_\mu(E_{\pm\beta_i})=\zeta^{\pm\mu(\beta_i)}E_{\pm\beta_i}$ for $0\leq i\leq n$ and ${\mathfrak g}^{\theta_\mu}$ is the reductive subalgebra of ${\mathfrak g}$ generated by ${\mathfrak t}^\gamma$ and the $E_{\pm\beta_i}$ such that $\mu(\beta_i)=0$.

We can also repeat the above construction in (coprime) positive characteristic, with the following clarification.
For each $\alpha\in\Phi$ let $U_\alpha$ be the root subgroup of $G$ corresponding to $\alpha$ \cite[26.3]{hum}.
There exists a unique isomorphism (of algebraic groups) $\epsilon_\alpha:{\mathbb G}_a\rightarrow U_\alpha$ such that $t\epsilon_\alpha(x)t^{-1}=\epsilon_\alpha(\alpha(t) x)$ for all $t\in T$, $x\in k$ and $(d\epsilon_\alpha)_0(1)=e_\alpha$ (see for example \cite[Thm. 26.3]{hum}).
By abuse of notation, let $\gamma$ denote the unique automorphism of $G$ such that $\gamma(\epsilon_\alpha(x))=\epsilon_{\gamma(\alpha)}(x)$ for $\alpha\in\pm\Delta$.
Fixing a primitive $m$-th root of unity $\zeta$, we can associate an automorphism $\Int t\circ\gamma$ of $G$ to each Kac diagram $\mu$ (of order coprime to $p$).
As before we will say that the automorphism $\theta_\mu$ constructed in this way is the Kac automorphism associated to the Kac diagram $\mu$.
We note that $\gamma$ is the Kac automorphism corresponding to $\mu$ with $\mu(\beta_i)=\left\{\begin{array}{ll} 1 & \mbox{if $i=0$,}Ê\\ 0 & \mbox{otherwise.}\end{array}\right.$
%Note that in characteristic zero the fixed point subalgebra ${\mathfrak g}^\theta$ for a Kac automorphism $\theta$ corresponding to the Kac diagram $\mu$ is generated by ${\mathfrak t}^\gamma$ and the elements $E_{\pm\beta_i}$, $0\leq i\leq n$ with $\mu(\beta_i)=0$.
%In positive characteristic, we have to be somewhat more careful.
Moreover, for $\beta_i\in\Pi$, either all roots $\alpha\in\beta_i$ are orthogonal and the subgroups $U_{\alpha}$, $\alpha\in\beta_i$ commute, or $\beta_i=\{ \alpha_n,\alpha_{n+1}\}$ in type $\tilde{A}_{2n}^{(2)}$.
For $i\geq 1$, if the roots in $\beta_i$ are orthogonal then let $\epsilon_{\pm\beta_i}:{\mathbb G}_a\rightarrow G$, $\epsilon_{\pm\beta_i}(x)=\prod_{\alpha\in\pm\beta_i}\epsilon_\alpha(x)$ and let $U_{\pm\beta_i}=\epsilon_{\pm\beta_i}({\mathbb G}_a)$ (unipotent subgroups of $G$).
Then $(d\epsilon_{\pm\beta_i})_0(1)=E_{\pm\beta_i}$, where $E_{\pm\beta_i}$ is as defined in the paragraph above.
For the case $\beta_n$ in type $\tilde{A}_{2n}^{(2)}$ we can construct $\epsilon_{\pm\beta_n}$ by restricting to the subgroup of type $A_2$ which contains $U_{\pm\alpha_n}$ and $U_{\pm\alpha_{n+1}}$.
Specifically, $\epsilon_{\beta_n}(x)=\epsilon_{\alpha_n}(x/2)\epsilon_{\alpha_{n+1}}(x)\epsilon_{\alpha_n}(x/2)$ and $\epsilon_{-\beta_n}(x)=\epsilon_{-\alpha_n}(x)\epsilon_{-\alpha_{n+1}}(2x)\epsilon_{-\alpha_n}(x)$ define homomorphisms $\epsilon_{\pm\beta_n}:{\mathbb G}_a\rightarrow G^\gamma$ such that $(d\epsilon_{\pm\beta_n})_0(1)=E_{\pm\beta_n}$, where $E_{\pm\beta_n}$ are defined as in characteristic zero.
Finally, either:

 -  $\overline\alpha$ is $\gamma$-stable, in which case we define $\epsilon_{\pm\beta_0}(x)=\epsilon_{\mp\overline\alpha}(x)$;
 
 - or $s=2$ and $\gamma(\overline\alpha)\neq\alpha$, in which case we set $\epsilon_{\pm\beta_0}(x)=\epsilon_{\mp\overline\alpha}(x)\gamma(\epsilon_{\mp\overline\alpha}(-x))$;
 
 - or $s=3$ and we set $\sigma=\zeta^{m/3}$, $\epsilon_{\beta_0}(x)=\epsilon_{-\overline\alpha}(x)\gamma(\epsilon_{-\overline\alpha}(\sigma^{-1}x))\gamma^{-1}(\epsilon_{-\overline\alpha}(\sigma x))$ and $\epsilon_{-\beta_0}(x)=\epsilon_{\overline\alpha}(x)\gamma(\epsilon_{\overline\alpha}(\sigma x))\gamma^{-1}(\epsilon_{\overline\alpha}(\sigma^{-1} x))$.

We have $\gamma(\epsilon_{\pm\beta_0}(x))=\epsilon_{\pm\beta_0}(-x)$ if $s=2$,  $\gamma(\epsilon_{\pm\beta_0})(x)=\epsilon_{\pm\beta_0}(\sigma^{\pm 1} x)$ if $s=3$ and $(d\epsilon_{\pm\beta_0})_0(1)=E_{\pm\beta_0}$, where $E_{\pm\beta_0}$ are defined as in characteristic zero.
Let $U_{\pm\beta_0}=\epsilon_{\pm\beta_0}({\mathbb G}_a)$.
Then each of the subgroups $U_{\pm\beta_i}$ with $\mu(\beta_i)=0$ is contained in $G(0)=(G^{\theta_\mu})^\circ$.
We recall \cite[8.1]{steinberg} that $G(0)$ is reductive.
Moreover, $(T^{\theta_\mu})^\circ=T(0)$ is regular in $G$ by inspection, and hence is a maximal  torus of $G(0)$.
We claim that $G(0)$ is generated by $T(0)$ and the subgroups $U_{\pm\beta_i}$ with $\mu(\beta_i)=0$.
To see this, we remark first of all that $\dim{\mathfrak g}(0)$ is independent of the characteristic.
For if $\alpha\in\Phi$ then let $l(\alpha)$ be the number of $\gamma$-conjugates of $\alpha$ (here either 1, 2 or 3).
Then ${\mathfrak g}_{(\alpha)}\cap{\mathfrak g}(0)$ is of dimension 1 if $d\theta_\mu^{l(\alpha)}(e_\alpha)=e_\alpha$ and of dimension 0 otherwise.
Since this description is clearly independent of the characteristic of $k$, it follows that the same can be said of the dimension of ${\mathfrak g}(0)$.
Let $\tilde\Phi_I$ be the root system with basis $\{\beta_i\in\Pi:\mu(\beta_i)=0\}$.
Then our assumption that $p$ is coprime to the order of $\theta_\mu$ implies that in all cases $p$ is good for $\tilde\Phi_I$ except for one case in type $\tilde{D_4}^{(3)}$ (the Kac diagram $100$) with $p=2$ and one case in type $\tilde{E}_6^{(2)}$ (the Kac diagram $10000$) with $p=3$.
If $p$ is good then $E_{\pm\beta_i}$, $\mu(\beta_i)=0$ and ${\mathfrak t}^\gamma$ generate a subalgebra of ${\mathfrak g}$ which is isomorphic to the Lie algebra of a reductive group with root system $\tilde\Phi_I$.
Since the dimension is independent of the characteristic, this subalgebra must be all of ${\mathfrak g}(0)$.
Now, since the subgroup of $G$ generated by $T(0)$ and the $U_{\pm\beta_i}$, $\mu(\beta_i)$ is contained in $G(0)$, it must be reductive and have root system $\tilde\Phi_I$.

This leaves only the Kac diagrams $100$ in type $\tilde{D}_4^{(3)}$ and $10000$ in type $\tilde{E}_6^{(2)}$; in both cases the corresponding Kac automorphism is just $\gamma$.
In the first case the roots $\alpha\in\Phi$ fall into two classes: (i) those such that $\gamma(e_\alpha)=e_\alpha$ (specifically $\pm\alpha\in\{\alpha_2,\sum_1^4 \alpha_i, \hat\alpha\}$), (ii) those such that $\gamma(\alpha)\neq \alpha$.
It is thus easy to see that $\dim{\mathfrak g}_{(\alpha)}\cap{\mathfrak g}^\gamma=1$ for all $\alpha$ and, setting $H_{\beta_1}=h_{\alpha_1}+h_{\alpha_3}+h_{\alpha_4}$, $H_{\beta_2}=h_{\alpha_2}$, $E_{(\alpha)}=e_\alpha$ in case (i) and $E_{(\alpha)}=e_\alpha+\gamma(e_\alpha)+\gamma^{-1}(e_\alpha)$ in case two, the set $\{ÊH_{\beta_1},H_{\beta_2},E_{(\alpha)}\, |\, \alpha\in\Phi\}$ is a basis for ${\mathfrak g}^\gamma$ and satisfies the relations of a Chevalley basis for a Lie algebra of type $G_2$.
($H_{\beta_1}$ and $H_{\beta_2}$ are always linearly independent since the centre of ${\mathfrak g}$ is generated by $h_{\alpha_1}+h_{\alpha_3}$ and $h_{\alpha_1}+h_{\alpha_4}$.)
Since $G(0)$ contains $T(0)$ and the subgroups $U_{\pm\beta_1}$, $U_{\pm\beta_2}$, this shows that $G(0)$ is generated by these subgroups and is semisimple of type $G_2$.
In the second case, even though $p=3$ is not good for a root system of type $F_4$, it is nevertheless clear that the Lie subalgebra of ${\mathfrak g}$ generated by ${\mathfrak t}^\gamma$ and the $E_{\pm\beta_i}$, $1\leq i\leq 4$ is isomorphic to the Lie algebra of a semisimple group of type $F_4$ (since it contains all appropriate root subspaces for $T(0)$).
Thus by the same argument as above $G(0)$ is generated by $T(0)$ and the subgroups $U_{\pm\beta_i}$, $1\leq i\leq 4$.

We will now show that any automorphism of order $m$ is $\Aut G$-conjugate to a Kac automorphism.
For inner automorphisms, this result appeared in \cite{serrekac}.
One approach would be to construct the loop algebra $L({\mathfrak g},\sigma)$ as in characteristic zero; but this would (presumably) require the theory of Kac-Moody Lie algebras in positive characteristic.
For inner automorphisms, a more direct proof is suggested by considering all ${\mathbb Z}/m{\mathbb Z}$-gradings of ${\mathfrak g}$, and reducing the problem to the statement about the fundamental alcove for the affine Weyl group \ref{fundrk}(a).
(This was the approach of \cite{serrekac}, which also makes it possible to extend Kac's classification to semisimple inner automorphisms of order $p$, that is, ${\mathbb Z}/p{\mathbb Z}$-gradings for which $\Lie(T)\subset{\mathfrak g}(0)$.)
This argument could also be applied to the outer automorphisms by considering the appropriate Bruhat-Tits building; but here we avoid such technical machinery by expressing the problem in terms of conjugacy classes in $T(0)$.

\begin{lemma}
The number of elements of order $m$ in a torus of rank $n$ is independent of the characteristic of the ground field, assuming this characteristic is either zero or coprime to $m$.
\end{lemma}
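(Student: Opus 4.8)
The plan is to reduce the statement to a purely combinatorial count that visibly does not involve the characteristic. First I would fix an isomorphism $T\cong (k^\times)^n$ of algebraic groups, which is available over any algebraically closed field since $T$ is a torus of rank $n$. Under this identification, for each divisor $d$ of $m$ the group of $d$-torsion points
$$T_{[d]}=\{ t\in T\, |\, t^d=e\}$$
is isomorphic to $\mu_d(k)^n$, where $\mu_d(k)$ denotes the group of $d$-th roots of unity in $k$.

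The crux of the argument---and the only place where the coprimality hypothesis is used---is the computation of $|T_{[d]}|$. Since $d\mid m$ and $p=\charac k$ either vanishes or is coprime to $m$, we have $p\nmid d$; hence the polynomial $x^d-1$ is separable over $k$ and has exactly $d$ distinct roots. Therefore $|\mu_d(k)|=d$ and $|T_{[d]}|=d^n$. This integer depends only on $d$ and $n$, and not at all on $\charac k$.

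It remains to express the number of elements of exact order $m$ in terms of the quantities $|T_{[d]}|$. An element has order exactly $m$ if and only if it lies in $T_{[m]}$ but in none of the proper subgroups $T_{[d]}$ with $d\mid m$, $d\neq m$. By inclusion--exclusion over the divisor poset of $m$ (Möbius inversion), the number of such elements is a fixed integer combination of the values $\{ d^n\, |\, d\mid m\}$, namely $\sum_{d\mid m}\mu(m/d)\, d^n$, where $\mu$ here denotes the number-theoretic Möbius function. Since each $d^n$ is independent of the characteristic by the previous step, so is the whole sum, which is exactly the assertion. I do not expect any genuine obstacle: once the separability of $x^d-1$ gives the characteristic-free count $|T_{[d]}|=d^n$, the remainder is a formal Möbius inversion, and the hypothesis $p\nmid m$ is precisely what guarantees that separability for every divisor $d$ of $m$ simultaneously.
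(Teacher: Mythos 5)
Your proof is correct and takes essentially the same approach as the paper: both arguments rest on the observation that coprimality of $\charac k$ with $m$ makes $x^d-1$ separable for every $d\mid m$, so the $d$-torsion subgroup has exactly $d^n$ elements, and then sieve out elements of smaller order. The only difference is cosmetic: the paper solves the recursion $\phi_T(m)=m^n-\sum_{d\mid m,\,d\neq m}\phi_T(d)$ by induction, while you write down its closed-form solution $\sum_{d\mid m}\mu(m/d)\,d^n$ via M\"obius inversion.
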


\begin{proof}
Let $T$ be a torus of rank $n$.
There are $m^{\dim T}$ elements $t$ of $T$ satisfying $t^m=1$.
Let $\phi_{T}(m)$ denote the number of elements of $T$ of order $m$.
Then $\phi_T(m)=m^{dim T}-\sum_{d| m,d\neq m}\phi_T(d)$.
Since $\phi_T(1)=1$, the lemma follows by induction.
\end{proof}

\begin{lemma}\label{kacgen}
Let $G$ be almost simple and of simply-connected or adjoint type.
Then any automorphism of $G$ of finite order coprime to $p$ is $\Aut G$-conjugate to a Kac automorphism.
\end{lemma}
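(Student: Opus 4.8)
\emph{The plan} is to bring $\theta$ (which is semisimple, its order being coprime to $p$) into the normal form $\Int t\circ\gamma$ with $t\in T(0)$, and then to run the alcove argument of Remark \ref{fundrk}(a) directly on the torus $T(0)$, where it becomes characteristic-free. First I would conjugate $\theta$ by a suitable element of $\Aut G$ so that its image in $\Aut G/\Int G$ equals the class of the fixed index-$s$ graph automorphism $\gamma$; this is possible because the relevant order-$s$ elements of $\Aut G/\Int G$ fall into a single $\Aut G$-conjugacy class (in type $D_4$ with $s=3$ this is exactly the conjugacy of the two choices of $\gamma$ noted when $\gamma$ was fixed). By Steinberg's theorem recalled above, $\theta$ stabilizes some Borel and maximal torus; conjugating these to the standard $(B,T)$ by an inner automorphism, $\theta\gamma^{-1}=\Int t$ then fixes $B$ and $T$, so $t\in N_G(T)\cap B=T$ and $\theta=\Int t\circ\gamma$, with $\theta|_T=\gamma|_T$ and $(T^\theta)^\circ=T(0)$. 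Finally, conjugation by $\Int s$ ($s\in T$) replaces $t$ by $\iota(s)\,t$, where $\iota(s)=s\gamma(s)^{-1}$; applying Lemma \ref{stabletori} to $\gamma|_T$ together with a telescoping computation identifies $\im\iota$ with the $\gamma$-anisotropic complement of $T(0)$, so I may absorb the anisotropic part of $t$ and assume $t\in T(0)$, whence $t^m\in Z(G)$.

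It then remains to conjugate $t$ within $T(0)$ to the element defining a Kac automorphism. Since every element of $(G^\gamma)^\circ$ commutes with $\gamma$, conjugation by $N_{(G^\gamma)^\circ}(T(0))$ preserves the normal form and acts on $t$ through the finite Weyl group $W(0)$ of $(G^\gamma)^\circ$, namely the Weyl group of the root system carried by the finite nodes $\beta_1,\dots,\beta_n$ of the twisted affine diagram. Identifying the $m$-torsion $T(0)[m]$ with $\tfrac1m Y(T(0))/Y(T(0))$ by means of $\zeta$, the $W(0)$-action lifts to that of $W(0)\ltimes Y(T(0))$ on $Y(T(0))$, which is precisely the affine Weyl group of the twisted affine root system; by Remark \ref{fundrk}(a) its closed fundamental alcove supplies unique representatives, and these are exactly the primitive Kac diagrams of order $m$, the labels being read off from $\beta_i(t)=\zeta^{n_i}$ subject to $m=s\sum_i a_in_i$. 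Hence $\theta$ is $\Aut G$-conjugate to the corresponding $\theta_\mu$. (For $G$ simply connected one works with the coweight lattice in place of $Y(T(0))$; the resulting diagram automorphisms of the affine diagram are absorbed into $\Aut G$-conjugacy.)

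The hard part will be this last step: realizing the affine Weyl group of the twisted affine system by honest conjugations in $G$, and in particular seeing that passing to the compact torus $T(0)$ (rather than to the affine apartment as a vector space) automatically produces the lattice of translations that promotes the finite group $W(0)$ to the full affine Weyl group. I would handle it by observing that every object entering the alcove argument --- the lattice $Y(T(0))$, the root datum of $(G^\gamma)^\circ$, and the set of order-$m$ points of $T(0)$ --- is independent of the characteristic; the counting lemma above in particular guarantees that $T(0)[m]$ is parametrized by $\tfrac1m$-lattice points exactly as over $\mathbb C$. Consequently the identification of fundamental-alcove representatives with primitive Kac diagrams, valid over $\mathbb C$ by Kac's theorem, transfers verbatim to characteristic $p$.
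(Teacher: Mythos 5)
Your route is genuinely different from the paper's, and in outline it works. You share the first half --- reduction to the normal form $\Int t\circ\gamma$ with $t\in T(0)$ via Steinberg's theorem, absorbing the $\gamma$-anisotropic part of $t$ (your $\iota(s)=s\gamma(s)^{-1}$ surjectivity argument is if anything cleaner than the paper's $s^2=t_1$ computation, which is really written for $s=2$) --- but then the paths diverge. The paper deliberately avoids the alcove machinery: it proves, by a Bruhat-decomposition argument using adjointness, that $\Int t\circ\gamma$ and $\Int s\circ\gamma$ ($t,s\in T(0)$) are conjugate if and only if $t,s$ are $N_{G^\gamma}(T)$-conjugate, and then runs a counting argument --- the centralizer orders $\#Z_{\bar W}(\mu)$ are combinatorial and hence characteristic-free, the number of order-$m$ elements of a torus is characteristic-free (the counting lemma), and in characteristic zero Kac's theorem says the Kac classes exhaust this count, so they exhaust it in characteristic $p$ as well. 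You instead transfer the alcove combinatorics itself: $W(0)$-orbits on $T(0)[m]\cong\tfrac1m Y(T(0))/Y(T(0))$ are the orbits of $W(0)\ltimes Y(T(0))$ on $\tfrac1m Y(T(0))$ (translations act trivially on the torsion quotient, which is why the affine enlargement is free), and the assertion that every orbit meets the set of Kac points is a purely lattice-theoretic statement, available over $\mathbb C$ from Remark \ref{fundrk}(a) and parts (c),(d) of the quoted theorem, hence valid in characteristic $p$. What each approach buys: the paper's counting cannot do without the Bruhat conjugacy claim (otherwise class sizes have no meaning), but it never has to identify $W(0)\ltimes Y(T(0))$ with a twisted affine Weyl group; your argument needs no counting and no Bruhat claim for bare existence --- $W(0)$ is honestly realized by $N_{(G^\gamma)^\circ}(T(0))$, whose reflection generators the paper has already exhibited via the subgroups $U_{\pm\beta_i}$ --- and it is constructive, producing the Kac diagram of a given $\theta$; the price is precisely the identification of the twisted affine Weyl group inside $W(0)\ltimes Y(T(0))$, i.e.\ the Bruhat--Tits-flavoured input the paper says it is avoiding, and your transfer-from-$\mathbb C$ device is a legitimate way to discharge it, in the same spirit as the paper's transfer of a cardinality.

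Two repairable slips. First, the closed fundamental alcove does not supply \emph{unique} representatives for $W(0)\ltimes Y(T(0))$: in the adjoint case this group is the \emph{extended} twisted affine Weyl group, and nontrivial diagram automorphisms identify distinct alcove points (the paper correspondingly counts Kac diagrams only up to diagram isomorphism). You need only existence, so nothing breaks, but ``unique'' should be deleted, and the claimed equality with the affine Weyl group should be a containment $W_{\mathrm{aff}}\subseteq W(0)\ltimes Y(T(0))$. Second, your isogeny bookkeeping is backwards: for $G$ simply connected the normal form gives only $t^m\in Z(G)$, not $t^m=1$, so the identification $T(0)[m]\cong\tfrac1m Y(T(0))/Y(T(0))$ does not directly apply to $t$. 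The correct repair is the paper's opening move: pass to $\overline G=G/Z(G)$ first via the injection $\Aut G\rightarrow\Aut\overline G$; it is for the \emph{adjoint} group that $Y(T)$ is the coweight lattice and the extra diagram symmetries appear, not the simply connected one as your final parenthesis asserts.
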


\begin{proof}
Let $T$ be a maximal torus of $G$.
Let $Z(G)$ be the scheme-theoretic centre of $G$ and let $\overline{G}=G/Z(G)$ be the adjoint group of $G$.
Clearly any automorphism of $G$ induces an automorphism of $\overline{G}$.
Moreover, the induced map $\Aut G\rightarrow\Aut\overline{G}$ is injective by a standard description of $\Aut G$ in terms of graph automorphisms and inner automorphisms.
Thus we may assume that $G$ is of adjoint type.
Now any semisimple inner automorphism of $G$ is conjugate to $\Int t$ for some $t\in T$.
Let $\bar{W}=N_{\Aut G}(T)/T$.
Then two elements of $T$ are $\Aut G$-conjugate if and only if they are $\bar{W}$-conjugate.

To each element $t\in T$ of order $m$ we associate the homomorphism $\lambda_t:{\mathbb Z}\Phi\rightarrow{\mathbb Z}/m{\mathbb Z}$ such that $\alpha(t)=\zeta^{\lambda_t(\alpha)}$.
If $\mu$ is a Kac diagram of non-twisted type, then by the above discussion the corresponding Kac automorphism of $G$ is $\Int t_\mu$, where $\lambda_{t_\mu}(\alpha_i)=\mu(\alpha_i)$ modulo $m$ for $0\leq i\leq n$.
The action of $\bar{W}$ on $\Phi$ gives rise to an action on the set of homomorphisms ${\mathbb Z}\Phi\rightarrow{\mathbb Z}/m{\mathbb Z}$.
Moreover, this action satisfies $Z_{\bar W}(t)=Z_{\bar W}(\lambda_t)$ for $t\in T$ of order $m$, and thus $Z_{\bar W}(t_\mu)=Z_{\bar W}(\mu)$ for any Kac diagram $\mu$.

It is possible for two distinct Kac diagrams to be conjugate by some element of $W$.
In our circumstances it is easier to describe when Kac automorphisms corresponding to Kac diagrams $\mu_1$ and $\mu_2$ are conjugate by an element of $\bar{W}$: this holds if and only if there is an isomorphism of diagrams $\mu_1\rightarrow\mu_2$
(that is, an isomorphism $\psi$ of graphs which also satisfies $\mu_2(\psi(\alpha))=\mu_1(\alpha)$ for each node $\alpha$).
Pick a set $I$ of representatives for the $\bar{W}$-conjugacy classes of Kac diagrams of order $m$.
Then the number of automorphisms of the form $\Int t$, $t\in T$ which are conjugate to a Kac automorphism of order $m$ is $\sum_{\mu\in I}\#({\bar W})/\#(Z_{\bar W}(\mu))=\sum_{\mu\in I}\#({\bar W})/\#(Z_{\bar W}(t_\mu))$.
If $\charac k=0$, then this sum is equal to the number of elements of $T$ of order $m$.
But therefore it is equal to the number of elements of $T$ of order $m$ in arbitrary characteristic.

Now, suppose $\theta$ is an outer automorphism of $G$.
Since any semisimple automorphism of $G$ fixes some Borel subgroup of $G$ and a maximal torus contained in it \cite[7.5]{steinberg}, any outer automorphism of $G$ is conjugate to an automorphism of the form $\Int t\circ\gamma$, where $t\in T$ and $\gamma$ is one of the graph automorphisms described above.
Let $T(0)=(T^\gamma)^\circ$ and let $T(1)=\{ t\in T\, |\, \gamma(t)=t^{-1}\}^\circ$.
Then $T$ is the almost direct product of $T(0)$ and $T(1)$ by Lemma \ref{stabletori}.
Hence $t=t_0.t_1$ for some $t_0\in T(0)$, $t_1\in T(1)$.
Thus there is some $s\in T(1)$ such that $s^2=t_1$; then $\Int s^{-1}\circ\Int t\circ\gamma\circ\Int s=\Int ts^{-2}\circ\gamma=\Int t_0\circ\gamma$.
Therefore any outer automorphism of $G$ is conjugate to an automorphism of the form $\Int t\circ\gamma$ with $t\in T(0)$.
We claim that $\Int t\circ\gamma$ and $\Int s\circ\gamma$, for $t,s\in T(0)$ are conjugate if and only if there exists some $g\in N_{G^\gamma}(T)$ such that $gtg^{-1}=s$.
Since $G$ is adjoint, $\Int t\circ\gamma$ and $\Int s\circ\gamma$ are conjugate if and only if there exists some $x\in G$ such that $xt\gamma(x^{-1})=s$.
Consider the Bruhat decomposition $x=unv$ of $x$, where $u,v\in U$ and $n\in N_G(T)$.
We have $xt=unt\cdot (t^{-1}vt)$ and $s\gamma(x)=(s\gamma(u)s^{-1})\cdot s\gamma(n)\gamma(v)$, thus by uniqueness $nt=s\gamma(n)$.
In particular, $nT\in W^\gamma$.
Now, a direct check shows that each element of $W^\gamma$ has a representative in $G^\gamma$ (once again using adjointness of $G$), thus $s$ and $t$ are $N_{G^\gamma}(T)$-conjugate.

It follows that the $\Int G$-conjugacy classes of outer automorphisms of the form $\Int x\circ\gamma$ are in one-to-one correspondence with the $W^\gamma$-conjugacy classes in $T$.
Repeating the counting argument used for inner automorphisms above, we deduce that any outer automorphism is conjugate to a Kac automorphism.
\end{proof}

From now on we will refer to the type of an outer automorphism of a simple group $G$ by specifying the type of its corresponding Kac algebra, but without the tilde.
We note the following corollary of the above result for inner automorphisms.
In good characteristic this was already known (\cite[Prop. 30]{mcninchsommers} or \cite[Prop. 3.1]{premorbits}).
Recall that a pseudo-Levi subgroup of $G$ is a subgroup of the form $Z_G(x)^\circ$ for a semisimple element $x$ of $G$.
If $I$ is a proper subset of $\tilde\Delta$ then we denote by $L_I$ the subgroup of $G$ generated by $T$ and all $U_{\pm\alpha_i}$ with $\alpha_i\in I$.

\begin{corollary}\label{pseudocor}
The pseudo-Levi subgroups are the $G$-conjugates of subgroups of the form $L_I$, where $I$ is a proper subset of the $\tilde\Delta$ such that not all elements of $\tilde\Delta\setminus I$ have coefficient divisible by $p$ in the expression for $\nu$.
\end{corollary}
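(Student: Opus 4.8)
The plan is to deduce the statement from the classification of inner automorphisms in Lemma~\ref{kacgen}, translating the Kac-diagram bookkeeping into the language of root subsystems. In good characteristic this is the theorem of McNinch--Sommers and Premet, so the only genuinely new content is the tracking of the $p$-divisibility condition on the marks $a_i$ (the coefficients in $\nu=\sum_{i=0}^n a_i\alpha_i$, with $a_0=1$ and $a_i=m_i$ for $i\geq1$; cf. Remark~\ref{fundrk}(b)).

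First I would reduce to torsion elements. A pseudo-Levi is by definition $Z_G(x)^\circ$ for $x$ semisimple, and after conjugation we may take $x\in T$. Now $Z_G(x)^\circ$ depends only on the vanishing set $\Psi=\{\alpha\in\Phi\,|\,\alpha(x)=1\}$, and the elements of $T$ with vanishing set exactly $\Psi$ form a dense open subset of the diagonalizable group $\bigcap_{\alpha\in\Psi}\ker\alpha$. Since $\charac k=p$, every torsion element of $T$ has order coprime to $p$, and such elements are dense; hence we may assume $x$ has finite order $m$ coprime to $p$. Then $\Int x$ is inner of order coprime to $p$, so by Lemma~\ref{kacgen} it is $\Aut G$-conjugate to the Kac automorphism $\Int t_\mu$ of a non-twisted Kac diagram $\mu$, and by the description of fixed-point subgroups recalled before that lemma, $Z_G(t_\mu)^\circ=L_I$ with $I=\{\alpha_i\in\tilde\Delta\,|\,\mu(\alpha_i)=0\}\subsetneq\tilde\Delta$. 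Thus every pseudo-Levi is $G$-conjugate to such an $L_I$.

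For necessity, the order of $\Int t_\mu$ equals $\mathrm{ord}(\mu)=\sum_{i=0}^n a_i\mu(\alpha_i)$, which is coprime to $p$. Since $\mu$ vanishes on $I$ while $\mu(\alpha_i)\geq1$ for every $\alpha_i\in\tilde\Delta\setminus I$, this reduces to $\sum_{\alpha_i\notin I}a_i\mu(\alpha_i)$; were every such $a_i$ divisible by $p$, the order would be divisible by $p$, a contradiction. For sufficiency, suppose some $\alpha_{j_0}\in\tilde\Delta\setminus I$ has $p\nmid a_{j_0}$. I would set $\mu(\alpha_i)=0$ for $\alpha_i\in I$, $\mu(\alpha_j)=1$ for $\alpha_j\notin I$, $j\neq j_0$, and let $\mu(\alpha_{j_0})$ range over positive integers, so that the order $m=\sum_{\alpha_j\notin I}a_j\mu(\alpha_j)$ changes by multiples of $a_{j_0}$; since $a_{j_0}$ is invertible modulo $p$, some choice gives $p\nmid m$. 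Fixing a primitive $m$-th root $\zeta$ and the element $t\in T$ with $\alpha_i(t)=\zeta^{\mu(\alpha_i)}$ (which exists since $\prod_i\alpha_i(t)^{a_i}=\zeta^m=1$ is consistent), it remains to show $\alpha(t)=1$ holds exactly for $\alpha$ in the set $\Phi_I$ of roots lying in ${\mathbb Z}I$. Here $I$ is linearly independent, as $\tilde\Delta$ carries the single relation $\sum a_i\alpha_i=0$, so $\Phi_I$ is a genuine subsystem with simple system $I$ and $L_I$ is reductive with root system $\Phi_I$. The classical bound $c_i\leq m_i$ on the coefficients of any root relative to $\hat\alpha$ then finishes the job: for a positive root $\alpha=\sum_{i\geq1}c_i\alpha_i$ one has $0\leq\sum_{\alpha_j\notin I}c_j\mu(\alpha_j)\leq\sum_{\alpha_j\notin I}m_j\mu(\alpha_j)\leq m$, so $\alpha(t)=\zeta^{\sum c_j\mu(\alpha_j)}=1$ forces the exponent to be $0$ or $m$; the first case gives $c_j=0$ for all $\alpha_j\notin I$, the second gives $c_j=m_j$ for all such $j$ (so $\hat\alpha-\alpha$ is supported on $I\cap\Delta$), and in both cases $\alpha\in\Phi_I$. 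Conversely every root of $\Phi_I$ is killed, whence $Z_G(t)^\circ=L_I$.

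The only real obstacle is the interplay of the two constraints in the sufficiency step: forcing $m$ coprime to $p$ while ensuring $t$ kills no root outside $\Phi_I$. The second is handled uniformly and for free by the bound $c_i\leq m_i$ once $\zeta$ is genuinely primitive, so the argument collapses to the elementary observation that a single mark coprime to $p$ lets one push $m$ into a residue prime to $p$ --- which is precisely the hypothesis on $\tilde\Delta\setminus I$. I would double-check only the boundary bookkeeping (that $\alpha_0\in I$ versus $\alpha_0\notin I$ both fall under the same coefficient estimate, the latter giving a strict bound $<m$ and hence an honest Levi, consistent with $a_0=1$ never being divisible by $p$).
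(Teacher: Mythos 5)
Your proof is correct and follows the route the paper intends: Corollary \ref{pseudocor} is given there without a written proof, as a direct consequence of Lemma \ref{kacgen} together with the description (recalled before that lemma) of $Z_G(t_\mu)^\circ$ as generated by $T$ and the $U_{\pm\alpha_i}$ with $\mu(\alpha_i)=0$, and your argument fleshes out exactly this deduction --- including the sufficiency construction via the exponent bound $c_i\leq m_i$, which is the same computation that underlies the paper's description of the fixed-point subgroups. The one small point worth patching is the passage from the $\Aut G$-conjugacy supplied by Lemma \ref{kacgen} to the $G$-conjugacy asserted in the statement: graph automorphisms act on $\tilde\Delta$ preserving the marks $a_i$ and send $L_I$ to $L_{\gamma(I)}$, so they permute the subgroups satisfying the coefficient condition and the conclusion is unaffected.
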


(If $p$ is good then there is no restriction here. The groups $L_I$ which don't satisfy the conditions of Cor. \ref{pseudocor} are the centralizers of non-smooth diagonalizable subgroups \cite{serrekac}.)

For later use we make the following observation.

\begin{lemma}\label{0and1}
Suppose $\mu$ is a Kac diagram corresponding to a positive rank automorphism.
Then $\mu(\alpha)\in\{Ê0,1\}$ for each $\alpha\in\Pi$.
\end{lemma}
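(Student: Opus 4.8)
The plan is to prove the contrapositive: if some coordinate satisfies $\mu(\beta_j)\ge 2$, then $\theta_\mu$ has rank $0$. By Vinberg's dichotomy (the second item of the introduction, together with Lemma \ref{stabtori}(a)) this amounts to showing that $\mathfrak g(1)$ contains no nonzero semisimple element, and for that it suffices to exhibit a one-parameter subgroup $\lambda$ of $G(0)$ contracting the whole space, i.e.\ with $\lim_{c\to 0}\lambda(c)\cdot x=0$ for every $x\in\mathfrak g(1)$: such an $x$ is then unstable, its orbit closure containing $0$, whereas a nonzero semisimple element has a closed orbit avoiding $0$, so none can exist. Concretely I would look for $\lambda\in Y(T(0))$ (recall $T(0)=(T^\gamma)^\circ$ is a maximal torus of $G(0)$, with $T(0)=T$ in the inner case) pairing strictly positively with every weight of the $T(0)$-module $\mathfrak g(1)$; by hyperplane separation such a rational $\lambda$ exists precisely when $0$ is not in the convex hull of these weights.

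I would then analyse these weights via the $\mathbb Z$-grading refining the $\mathbb Z/m$-grading. Taking first the inner case $\theta_\mu=\Int t$, let $H\in\mathfrak t$ be the grading cocharacter with $\langle\alpha_i,H\rangle=\mu(\alpha_i)$, so that $\alpha=\sum c_i\alpha_i$ has $\deg\alpha=\sum c_i\mu(\alpha_i)$ and $\mathfrak g(1)=\bigoplus_{\deg\alpha\equiv 1}\mathfrak g_\alpha$. Every positive root satisfies $0\le\deg\alpha\le\deg\hat\alpha=m-\mu(\alpha_0)\le m$, so the only degrees $\equiv 1\pmod m$ that occur are $\deg\alpha=1$ (forcing $\alpha$ positive, hence $\mathfrak g_\alpha\subset\mathfrak n^+$) and $\deg\alpha=1-m$ (negatives of degree-$(m-1)$ roots, lying in $\mathfrak n^-$). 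If either family is empty, then $\mathfrak g(1)\subseteq\mathfrak n^+$ or $\mathfrak g(1)\subseteq\mathfrak n^-$ consists of nilpotent elements and $H$ or $-H$ already contracts it; in particular this disposes of the affine node, since $\mu(\alpha_0)\ge 2$ rules out degree $1-m$. So the essential case is $j\ge 1$ with both families present.

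The key step is a coefficient count forced by $\mu(\alpha_j)\ge 2$. A degree-$1$ root cannot involve $\alpha_j$, as $\alpha_j$ alone contributes $\mu(\alpha_j)\ge 2$ to the degree; thus $\alpha_j$ has coefficient $0$ in every weight of $\mathfrak g(1)\cap\mathfrak n^+$. Conversely a degree-$(m-1)$ root must involve $\alpha_j$: a root omitting $\alpha_j$ lies in the subsystem spanned by $\Delta\setminus\{\alpha_j\}$, whose highest root $\theta_j$ satisfies $\theta_j\le\hat\alpha$ and omits $\alpha_j$, so $\hat\alpha-\theta_j$ contains $\alpha_j$ with coefficient $a_j\ge 1$ and $\deg\theta_j\le\deg\hat\alpha-a_j\mu(\alpha_j)\le m-a_j\mu(\alpha_j)\le m-2<m-1$. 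Hence in any putative relation $\sum_{\deg\alpha=1}c_\alpha\alpha=\sum_{\deg\beta=m-1}d_\beta\beta$ with nonnegative coefficients, the $\alpha_j$-coefficient is $0$ on the left but strictly positive on the right whenever the right-hand side is nonzero, while matching degrees forces the two sides to vanish together. So no nontrivial relation exists, $0$ lies outside the convex hull of the weights of $\mathfrak g(1)$, and the contracting $\lambda\in Y(T)$ is produced.

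Finally I would transport this to the twisted cases $\theta_\mu=\Int t\circ\gamma$, where the roles of $\hat\alpha$ and the marks $a_i$ are played by the imaginary root $\delta=\sum a_i\beta_i$ with the coefficients read off the twisted diagrams, and the weights of $\mathfrak g(1)$ are the relevant restricted roots on $T(0)$. I expect the main obstacle to be exactly this twisted bookkeeping: one must verify that the degrees occurring in $\mathfrak g(1)$ are again confined to $1$ and $1-m$, and that omitting the node $\beta_j$ still caps the degree strictly below $m-1$, the driving inequality being $a_j\mu(\beta_j)\ge 2$. Granting these points, the same convex-hull argument yields a one-parameter subgroup of $G(0)$ contracting $\mathfrak g(1)$, so $\theta_\mu$ has rank $0$ in every case, which is the contrapositive of the statement.
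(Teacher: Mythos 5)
Your inner-case argument is correct, and it is in essence the paper's own argument in Hilbert--Mumford packaging: the combinatorial core (every root of ${\mathfrak g}(1)$ is either a positive root of degree $1$, which must omit $\alpha_j$, or the negative of a positive root of degree $m-1$, which must involve $\alpha_j$) is exactly what makes the paper's containment ${\mathfrak g}(1)\subset{\mathfrak u}_{\Pi'}$ work, where $\Pi'=\Pi\setminus\{\alpha_j\}$; whether one then concludes ``all elements of ${\mathfrak g}(1)$ are nilpotent'' (the paper) or ``a cocharacter contracts ${\mathfrak g}(1)$, so it contains no nonzero semisimple element'' (you) is a matter of taste. One small repair: the subsystem spanned by $\Delta\setminus\{\alpha_j\}$ may be reducible, so your bound via ``its highest root $\theta_j$'' should be applied to the highest root of each irreducible component, or directly to the given root $\beta$; this is harmless.

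The twisted case, however, is a genuine gap, and it is the case the paper actually needs (Lemma \ref{0and1} is invoked precisely for the type $D_4^{(3)}$ classification). You defer it to ``bookkeeping,'' but the bookkeeping as you frame it --- degrees attached to finite restricted roots on $T(0)$ --- is unsound as stated, because in the twisted case the degree is not a function of the $T(0)$-weight: $d\theta_\mu$ acts on ${\mathfrak g}_{(\alpha)}$ with $l(\alpha)$ distinct eigenvalues, so a single restricted root $\bar\alpha$ occurs in several degrees differing by $m/l(\alpha)$. Consequently your dichotomy ``degrees confined to $1$ and $1-m$'' and the convex-hull separation can literally fail without an extra input: whenever $m/l(\alpha)$ divides $m-2$, both $\bar\alpha$ and $-\bar\alpha$ can be weights of ${\mathfrak g}(1)$, putting $0$ in the convex hull; diagrams with a single label $n_j=2$ produce exactly such configurations, and they are excluded only because such a diagram equals $2\mu'$ and is non-primitive --- a normalization your proposal never invokes. (One must also rule out the zero weight coming from ${\mathfrak t}\cap{\mathfrak g}(1)$, which holds here since $m\geq 2s$, but needs saying.) The paper closes the outer case with one idea your sketch lacks: let $\psi$ be the Kac automorphism of the auxiliary diagram $\mu'(\beta_l)=\delta_{lj}$; since every weight of ${\mathfrak g}(1)$ is a degree-$1$ twisted affine root with $\beta_j$-coefficient $0$, one gets ${\mathfrak g}(1)\subset{\mathfrak g}^{d\psi}$, and $\theta$ restricts to an \emph{inner} automorphism of the reductive group $G^\psi$ (with root basis $\Pi\setminus\{\beta_j\}$), so the inner case just proved applies. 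Supplying this reduction --- or, equivalently, running your separation at the level of the twisted affine root system, where the weights of ${\mathfrak g}(1)$ are nonzero nonnegative combinations of the linearly independent restrictions $\beta_i|_{T(0)}$, $i\neq j$ --- is what is needed to complete the proof.
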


\begin{proof}
Suppose $\mu(\alpha)>1$ for some $\alpha\in\Pi$.
Then $\Pi'=\Pi\setminus\{\alpha\}$ is a union of Dynkin diagrams.
If $\mu$ corresponds to an inner automorphism then ${\mathfrak l}={\mathfrak t}\oplus\sum_{\beta\in{\mathbb Z}\Pi'}{\mathfrak g}_\beta$ is the Lie algebra of the subgroup $L_{\Pi'}$ of $G$, in the notation introduced above.
Moreover, the subspace spanned by all ${\mathfrak g}_\beta$, where $\beta$ is non-zero and has non-negative coefficients in the elements of $\Pi'$, is contained in the Lie algebra ${\mathfrak u}_{\Pi'}$ of a maximal unipotent subgroup of $L_{\Pi'}$.
In particular, ${\mathfrak g}(1)\subset{\mathfrak u}_{\Pi'}$ and thus does not contain any semisimple elements.

Suppose therefore that $\mu$ is an outer automorphism of $G$.
If $\mu(\beta_i)>1$ for some $i$ then let $\mu'$ be the Kac diagram with $\mu'(\beta_j)=\delta_{ij}$.
There is a Kac automorphism $\psi$ of $G$ corresponding to $\mu'$.
Moreover, it follows from the description of Kac automorphisms that ${\mathfrak g}(1)\subset{\mathfrak g}^{d\psi}$.
But $\theta|_{G^\psi}$ is a zero-rank (inner) automorphism by the same argument as above.
\end{proof}

\section{Carter's classification of conjugacy classes}

Let $W$ be a Weyl group with natural real representation $V$.
Here we recall the set-up of the classification, according to Carter \cite{carter} of the conjugacy classes in $W$.
Any element $w$ of $W$ can be expressed as a product $w=w_1w_2$, where $w_1^2=w_2^2=1$ and $\{Êv\in V\,|\, w_1\cdot v=-v\}\cap\{Êv\in V\, |\, w_2\cdot v=-v\}=\{Ê0\}$.
Moreover, any involution $w'\in W$ can be expressed as a product of $l$ reflections corresponding to orthogonal roots, where $l=\dim\{ v\in V\, \mid \, w'(v)=-v\}$.
Thus let $I_1$, $I_2$ be subsets of $\Phi$ with $\#(I_i)=l(w_i)$ such that $w_i=\prod_{\alpha\in I_i}s_\alpha$ ($i=1,2$).
Then this gives an expression for $w$ as a product of reflections corresponding to $l_1+l_2=\dim V-\dim V^w$ roots, where $l_i=\dim\{ v\in V\, \mid w_i(v)=-v\}$, $i=1,2$.
Associate a graph $\Gamma$ to $w$ with one node for each root in $I_1\coprod I_2$ and $\langle\alpha,\beta\rangle\langle\beta,\alpha\rangle$ edges between nodes corresponding to distinct roots $\alpha,\beta\in I_1\cup I_2$.
The graph $\Gamma$ constructed in this way is the {\bf admissible diagram} associated to $w$.
Less formally, we will say that $w$ has Carter type $\Gamma$.
(In fact, it is possible to have two different such expressions for $w$ giving rise to two different diagrams, and on the other hand, non-conjugate elements can have the same admissible diagram.
The first problem Carter solved simple by making a particular choice of admissible diagram for each conjugacy class.
For the second problem, which only occurs in types $D$ and $E$, some classes are marked with a single prime and others with a double prime.
There are never more than two classes for the same admissible diagram \cite{carter}.)
For example, if $\Gamma$ is the Dynkin diagram for $W$ then $w$ is a Coxeter element of $W$, while if $\Gamma$ is the trivial graph then $w$ is the identity element.
The irreducible admissible diagrams are classified and their characteristic polynomials determined in \cite[Tables 2-3]{carter}.
We will say that an admissible diagram $\Gamma$ has order $N$ if an element of the corresponding conjugacy class has order $N$.
Let $w$ be an element of $W$ with admissible diagram $\Gamma$.
We denote by $\Phi_1$ the (closed) subsystem of $\Phi$ spanned by the elements of $\Gamma$, and by $\Phi_2$ the set of roots $\alpha\in\Phi$ which are orthogonal to all roots in $\Phi_1$.
%Let $\overline{\Phi}_1$ be the intersection ${\mathbb Q}\Phi_1\cap\Phi$, a subsystem of $\Phi$ which contains $\Phi_1$.
%(This was denoted $\bar{W}_1$ by Carter.)
In \cite[Tables 8-12]{carter}, Carter lists the conjugacy classes in the exceptional type Weyl groups, classified by admissible diagrams, and gives the number of elements in each class (and thus the number of elements in the centralizer of an element of a given class).
Following Carter's notation, we will use a tilde to denote a subsystem consisting of short roots.

\begin{example}
We give some examples to illustrate the above notions.

a) Let $W=S_n$, the symmetric group on $n$ letters.
Then the admissible diagram correponding to an $m$-cycle is a Coxeter graph of type $A_{m-1}$.
More generally, any element $w$ of $W$ is a product of disjoint cycles, of length $m_1,\ldots ,m_r$ say.
Then the admissible diagram for $w$ is a disjoint union of Coxeter graphs $A_{m_1-1},\ldots ,A_{m_r-1}$.
We say that $w$ is of type $A_{m_1-1}\times\ldots\times A_{m_r-1}$.
In this case $\Phi_1$ is isomorphic to the union $A_{m_1-1}\cup\ldots\cup A_{m_r-1}$ and $\Phi_2$ is isomorphic to $A_{n-m_1-\ldots-m_r-1}$.

b) If $W$ is a Weyl group of type $B_n$ then any element $w\in W$ can be decomposed as a product of {\it signed cycles}: a positive $m$-cycle has order $m$, and a negative $m$-cycle has order $2m$.
The admissible diagram for a positive $m$-cycle is a Coxeter graph of type $A_{m-1}$; the admissible diagram for a negative $m$-cycle is a Coxeter graph of type $B_m$.
If $w$ is a positive $m$-cycle then $\Phi_2$ is isomorphic to $B_{n-m}$ unless $m=2$, in which case $\Phi_2$ is isomorphic to $B_{n-2}\cup A_1$.
If $w$ is a negative $m$-cycle then $\Phi_2$ is isomorphic to $D_{n+1-m}$.

c) If $W$ is a Weyl group of type $D_n$ then $W$ is contained in a Weyl group of type $B_n$.
Thus any element of $W$ can also be expressed as a product of disjoint signed cycles.
(On the other hand, two elements of $W$ may have the same signed cycle type but not be conjugate in $W$.)
A product $w$ of two negative 2-cycles has admissible diagram labelled $D_4(a_1)$.
\end{example}

We want to ensure that all of the relevant information from \cite{carter} can also be applied in positive characteristic, including the information about the characteristic polynomials of elements of $W$.
Consider the `natural' representation of $W$ to be its representation in a Cartan subalgebra of the corresponding complex simple Lie algebra.
For $w\in W$, denote by $c_w(t)$ the characteristic polynomial of $w$ in this natural representation.

\begin{lemma}
Let $\charac k=p>0$ and let $G$ be a simple group over $k$ with root system $\Phi=\Phi(G,T)$, where $T$ is a maximal torus of $G$.
Then the minimal polynomial of $w$ as an automorphism of ${\mathfrak t}=\Lie(T)$ is just the reduction modulo $p$ of $c_w(t)$.
\end{lemma}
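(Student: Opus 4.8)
The plan is to present both the complex natural representation and the action on $\mathfrak t$ as base changes of a single $\mathbb Z$-linear representation, and then to control what reduction modulo $p$ does to minimal polynomials of finite-order integral matrices. Let $Y(T)=\Hom(k^\times,T)$, a free $\mathbb Z$-module of rank $n=\rank G$ on which $W$ acts faithfully and $\mathbb Z$-linearly; this action depends only on the root datum, hence agrees with the one in the complex case. Writing $M\in\GL_n(\mathbb Z)$ for the matrix of $w$ in a fixed basis of $Y(T)$, we have $\mathfrak t=\Lie(T)\cong Y(T)\otimes_{\mathbb Z}k$ while the natural representation is $Y(T)\otimes_{\mathbb Z}\mathbb C$, so $w$ acts as $M\otimes 1$ in each. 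In particular the characteristic polynomial of $w$ on $\mathfrak t$ is $\det(t\,\Id-M\otimes 1)=\overline{\det(t\,\Id-M)}=\overline{c_w(t)}$: the characteristic polynomial reduces coefficient by coefficient, with no hypothesis on $p$. The real content of the lemma is therefore that the minimal polynomial does not degenerate under reduction.

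For this I would use that $w$, and hence $M$, has finite order $N$ with $p\nmid N$ (the orders in play throughout this paper are prime to $p$), so that $t^N-1$ is separable over $k$. Consequently the minimal polynomial of $M\otimes 1$ divides $t^N-1$, is squarefree, and equals the product of $(t-\overline\lambda)$ over the distinct eigenvalues $\overline\lambda$ of $M\otimes 1$ in $\overline k$; the same description holds for $M$ over $\mathbb C$. Now reduction modulo $p$, along a chosen prime above $p$, restricts to an isomorphism from the $N$-th roots of unity in $\mathbb C$ onto those in $\overline k$, preserving the exact order of each element, and the eigenvalues of $M\otimes 1$ are (with multiplicity) the reductions of those of $M$. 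Grouping eigenvalues into Galois orbits writes the minimal polynomial of $M$ over $\mathbb C$ as a product $\prod_{d\in D}\Psi_d$ of distinct cyclotomic polynomials $\Psi_d$, with $d$ ranging over the divisors of $N$ realised as orders of eigenvalues; this is exactly the radical of $c_w$. Since the $\overline{\Psi_d}$ with $d\mid N$ are separable and pairwise coprime, $\overline{\prod_{d\in D}\Psi_d}$ is again squarefree and its roots are precisely the distinct eigenvalues of $M\otimes 1$. Hence the minimal polynomial of $w$ on $\mathfrak t$ is $\overline{\prod_{d\in D}\Psi_d}$, the reduction of the minimal polynomial of $w$, read off from $c_w$ as the product of its distinct cyclotomic factors.

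The one genuine obstacle is the possibility that two distinct eigenvalues of $M$ collide on reduction: this would make the minimal polynomial over $k$ a proper divisor of the reduction of the minimal polynomial over $\mathbb C$, and would destroy the comparison with the data of \cite{carter}. It is excluded exactly by the separability of $t^N-1$ modulo $p$, equivalently by $p\nmid N$; the remaining ingredients, namely the $\mathbb Z$-structure of the reflection representation and the term-by-term reduction of the characteristic polynomial, are formal. I would conclude by recording that the cyclotomic factors of $c_w$ occur with the multiplicities tabulated in \cite{carter}, so that whenever these multiplicities are all equal to $1$ the radical of $c_w$ is $c_w$ itself and the minimal polynomial of $w$ on $\mathfrak t$ is literally $\overline{c_w(t)}$.
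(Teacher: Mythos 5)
Your first paragraph is, in substance, the paper's entire proof: the paper likewise fixes a basis $\{\chi_1,\ldots,\chi_n\}$ of $Y(T)$, notes that $\{d\chi_1(1),\ldots,d\chi_n(1)\}$ is a basis of ${\mathfrak t}$, so that the characteristic polynomial of $w$ on ${\mathfrak t}$ is the coefficientwise reduction of that of the integral matrix, and then identifies the latter with $c_w(t)$ --- where you appeal to invariance of the reflection representation over ${\mathbb Q}$, the paper makes the same point concrete by passing to the universal cover and using the coroot basis, so that the matrix is visibly characteristic-free. Everything after that is yours alone, and it is where your proposal is genuinely stronger than the paper: the paper's proof stops at the characteristic polynomial and never mentions minimal polynomials at all, even though the statement says ``minimal polynomial''. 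As your closing remark implicitly detects, the statement read literally is false whenever $c_w$ is not squarefree --- for instance for $w$ of type $A_1^4$ in $F_4$ one has $c_w(t)=(t+1)^4$ while the minimal polynomial on ${\mathfrak t}$ is $t+1$ --- so ``minimal'' must be read as ``characteristic'', and the characteristic-polynomial version is both what the paper proves and what it later uses (eigenvalue multiplicities from the tables of \cite{carter}, with semisimplicity of $w$ supplied separately by the standing assumption that $p$ does not divide the relevant orders). Your second paragraph, showing that when $p\nmid N$ the minimal polynomial of $w$ on ${\mathfrak t}$ is the reduction of the minimal polynomial over ${\mathbb C}$, namely the radical of $c_w$ written as a product of distinct cyclotomic factors, is correct: separability of $t^N-1$ modulo $p$ and injectivity of reduction on $N$-th roots of unity are exactly the right ingredients, and the coprimality you import, though absent from the lemma as stated, is a blanket hypothesis of the paper. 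In short, your route buys a true statement about minimal polynomials together with a precise identification of the only way reduction could degenerate (eigenvalue collision, excluded by $p\nmid N$), at no cost, since the characteristic-polynomial identity --- the only part the paper needs or proves --- you establish by the same ${\mathbb Z}$-form argument, valid with no hypothesis on $p$.
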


\begin{proof}
Given a basis $\{ \chi_1,\ldots ,\chi_n\}$ of the lattice of cocharacters $Y(T)$ we can associate a matrix $A\in\GL(n,{\mathbb Z})$ to $w$ by: $w(\chi_i)=\sum_j A_{ji}\chi_j$.
Since $w(d\chi_i(1))=\sum_j A_{ji}d\chi_j(1)$ and $\{ d\chi_1(1),\ldots ,d\chi_n(1)\}$ is a basis of ${\mathfrak t}$, clearly the characteristic polynomial of $w$ as an element of $\GL({\mathfrak t})$ is just the reduction modulo $p$ of the characteristic polynomial of $A$.
On the other hand, the characteristic polynomial of $A$ is invariant under change of basis in $Y(T)\otimes_{\mathbb Z}{\mathbb Q}$, and in particular is the same if one replaces $G$ by its universal covering.
But now one can choose a basis for $Y(T)$ consisting of the $\alpha^\vee$ where $\alpha$ is a simple root.
Since the matrix thus associated to $w$ is clearly independent of the characteristic, its characteristic polynomial must be equal to $c_w(t)$.
\end{proof}

\begin{lemma}\label{w0prep}
Let $G,T,{\mathfrak t},\Phi,W,w,\Phi_1,\Phi_2$ be as above and suppose that $w$ has order $m$, $p\nmid m$.
Let $U=\{ t\in{\mathfrak t}\,|\,w(t)=\zeta t\}$, where $\zeta$ is a primitive $m$-th root of unity in $k$.
Then any element of $W(\Phi_2)$ acts trivially on $U$.
\end{lemma}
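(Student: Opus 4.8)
The plan is to show that each Coxeter generator of $W(\Phi_2)$, i.e. each reflection $s_\alpha$ with $\alpha\in\Phi_2$, already fixes $U$ pointwise; since these reflections generate $W(\Phi_2)$, the lemma follows at once. Recall that $s_\alpha$ acts on $\mathfrak t=Y(T)\otimes_{\mathbb Z}k$ by $s_\alpha(v)=v-\langle\alpha,v\rangle\,\alpha^\vee$, where $\langle\alpha,v\rangle\in k$ is the value of the root $\alpha\in X(T)$ on $v$. Thus for a fixed $v\in U$ it suffices to prove that $\langle\alpha,v\rangle=0$ for every $\alpha\in\Phi_2$, and everything reduces to this scalar identity.

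First I would record that $w$ fixes every root of $\Phi_2$. By Carter's construction $w=w_1w_2$ is a product of reflections $s_\beta$ with $\beta$ ranging over a set of roots spanning $\Phi_1$, so in particular $w\in W(\Phi_1)$. For $\alpha\in\Phi_2$ and any such $\beta\in\Phi_1$ the defining orthogonality of $\Phi_2$ gives $\langle\alpha,\beta^\vee\rangle=0$, hence $s_\beta(\alpha)=\alpha-\langle\alpha,\beta^\vee\rangle\beta=\alpha$; applying this to each factor of $w$ yields $w(\alpha)=\alpha$.

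The key step is then a one-line eigenvalue computation exploiting the $w$-invariance of the natural pairing $\langle\,,\,\rangle\colon X(T)\times\mathfrak t\to k$. For $v\in U$ (so $w(v)=\zeta v$) and $\alpha\in\Phi_2$ (so $w(\alpha)=\alpha$) we obtain
$$\langle\alpha,v\rangle=\langle w(\alpha),w(v)\rangle=\langle\alpha,\zeta v\rangle=\zeta\,\langle\alpha,v\rangle,$$
whence $(1-\zeta)\langle\alpha,v\rangle=0$. We may assume $m>1$ (for $m=1$ one has $w=1$ and the statement is vacuous), so $\zeta\neq 1$ and the scalar $1-\zeta$ is nonzero, hence invertible, in $k$; therefore $\langle\alpha,v\rangle=0$. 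Consequently $s_\alpha(v)=v$ for all $\alpha\in\Phi_2$ and all $v\in U$, which is exactly what is needed.

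I expect the only real pitfall to be the temptation to take a heavier route. One is naturally led to try to prove that $U\subseteq\mathrm{span}_k(\Phi_1^\vee)$, on which each $s_\alpha$ with $\alpha\in\Phi_2$ acts trivially by the same orthogonality; but verifying this inclusion over $k$ would require controlling the reduction modulo $p$ of the eigenspace decomposition of $w$ on $\mathfrak t$ together with the index of the coroot sublattice $\mathbb Z\Phi_1^\vee$ in its saturation in $Y(T)$, which can degenerate in bad characteristic. The eigenvalue argument above bypasses all such lattice-theoretic subtleties, relying only on the invariance of the pairing, the fact that $w$ fixes $\Phi_2$, and the invertibility of $1-\zeta$.
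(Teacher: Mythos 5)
Your proof is correct, but it takes a genuinely different route from the paper's --- in fact, the paper follows precisely the route you flagged as the ``heavier'' one. The paper's proof sets ${\mathfrak t}_{\Phi_1}=\mathrm{span}_k\{h_\alpha\,|\,\alpha\in\Phi_1\}$ and $V=\{t\in{\mathfrak t}\,|\,d\alpha(t)=0\ \mbox{for all}\ \alpha\in\Phi_1\}$, asserts ${\mathfrak t}={\mathfrak t}_{\Phi_1}\oplus V$ ``by consideration of dimensions'' (this is where simple-connectedness is used: the simple coroots give a $k$-basis of ${\mathfrak t}$), deduces $U\subset{\mathfrak t}_{\Phi_1}$, and finishes by noting $s_\beta(h_\alpha)=h_\alpha$ for $\beta\in\Phi_2$, $\alpha\in\Phi_1$. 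Your eigenvalue computation with the $W$-invariant pairing proves the stronger statement $d\alpha(v)=0$ for all $\alpha\in\Phi_2$, $v\in U$ directly, and what it buys is real: it needs neither the simple-connectedness of $G$ nor any control of the mod-$p$ behaviour of the lattice ${\mathbb Z}\Phi_1^\vee\subset Y(T)$ (the degeneration you worry about is genuine --- e.g.\ for a subsystem generated by a short root of $C_2$ one has ${\mathfrak t}_{\Phi_1}\subset V$ in characteristic $2$ --- and the paper's dimension count is insulated from it only by the standing restrictions on $p$). Your argument also immediately yields the generalization the paper records in the remark following the lemma: for any orthogonal subsystems $\Phi_1,\Phi_2$ and any $w\in W(\Phi_1)$, the group $W(\Phi_2)$ fixes every eigenspace of $w$ on ${\mathfrak t}$ with eigenvalue $\neq 1$. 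One quibble: your parenthetical on $m=1$ is slightly off --- in that case the statement is false rather than vacuous (then $U={\mathfrak t}$ and $\Phi_2=\Phi$), so $\zeta\neq 1$ is a genuine implicit hypothesis; note, though, that the paper's proof needs it at exactly the corresponding spot (to get $U\subset{\mathfrak t}_{\Phi_1}$), so this does not separate the two arguments.
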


\begin{proof}
Since $G$ is simply-connected, ${\mathfrak t}$ is spanned by elements $h_\alpha$, with $\alpha$ in a basis of simple roots.
Let ${\mathfrak t}_{\Phi_1}$ be the linear span of the $h_\alpha$ with $\alpha\in\Phi_1$ and let $V=\{ t\in{\mathfrak t}\,|\, d\alpha(t)=0\;\forall\alpha\in \Phi_1\}$.
Then ${\mathfrak t}={\mathfrak t}_{\Phi_1}\oplus V$ by consideration of dimensions, and clearly $U\subset{\mathfrak t}_{\Phi_1}$.
But if $\beta\in\Phi_2$ then $s_\beta(h_\alpha)=h_\alpha$ for all $\alpha\in\Phi_1$, thus any element of $W(\Phi_2)$ acts trivially on $U$.
\end{proof}

We remark that a suitable generalization of Lemma \ref{w0prep} holds for any orthogonal subsystems $\Phi_1$, $\Phi_2$ and an element $w\in W(\Phi_1)$.

\section{Determination of positive rank automorphisms}

In this section we give details of the calculations we use to determine the positive rank automorphisms.
Assume that $G$ is simple, simply-connected and of exceptional type, and that $\charac k$ is either zero or a good prime for $G$.
(If $G$ is simple and of exceptional type, then the assumption that $p$ is good implies that $G$ is separably isogenous to a group satisfying the standard hypotheses.)
In particular we assume that $p>3$, and therefore that all elements of a Weyl group of type $G_2$ or $F_4$ are semisimple.
(We recall that $W(G_2)$ is a dihedral group of order $12$, and $W(F_4)$ has order $1152=2^7.3^2$.)
Let ${\mathfrak c}$ be a Cartan subspace of ${\mathfrak g}(1)$ and let $T_1$ be the (unique) maximal $\theta$-split torus of $G$ such that ${\mathfrak c}\subset\Lie(T_1)$ (Lemma \ref{stabtori}(b)).
Recall that $Z_G({\mathfrak c})^\circ= Z_G(T_1)$ \cite[Rk. 2.8(c)]{thetagroups}; since $G$ is simply-connected, $Z_G({\mathfrak c})$ is connected and hence equals $Z_G(T_1)$.
Let $T(0)$ be a maximal torus of $Z_{G(0)}({\mathfrak c})^\circ$.
Then $Z_G(T(0))\cap Z_G({\mathfrak c})$ is a $\theta$-stable maximal torus of $G$ \cite[Lemma 4.1]{thetagroups}.
In this section we will fix ${\mathfrak c}$ and $T(0)$, and set $T=Z_G(T(0))\cap Z_G({\mathfrak c})$, ${\mathfrak t}=\Lie(T)$.
With this assumption on $T$, we make some straightforward observations.

\begin{lemma}
Suppose $\theta$ is inner.
Then $\theta=\Int n_w$, where $n_w\in N_G(T)$.
Moreover, either $w=n_w T$ has order $m$, or $n_w\in T$ and $\theta$ is of zero rank.
\end{lemma}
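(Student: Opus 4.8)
The plan is to manufacture the normaliser representative directly from innerness together with the $\theta$-stability of $T$, and then to read off the order of $w$ from the grading of $\mathfrak{t}$, splitting into the positive-rank and zero-rank cases.

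First I would fix a good representative. Since $\theta$ is a semisimple inner automorphism, write $\theta=\Int g$ and take the Jordan decomposition $g=g_sg_u$; then $\Int g_u=\mathrm{id}$, so $g_u\in Z(G)$, and as $G$ is semisimple $Z(G)$ is finite, forcing $g_u=e$. Thus $g$ may be taken semisimple. Because $T$ is $\theta$-stable, $gTg^{-1}=\theta(T)=T$, so $g\in N_G(T)$; set $n_w=g$ and $w=n_wT\in W$. Here $d\theta|_{\mathfrak{t}}=\Ad(n_w)|_{\mathfrak{t}}$ is exactly the action of $w$ on $\mathfrak{t}$, acting on $\mathfrak{t}(i)$ as multiplication by $\zeta^i$; since $W$ acts faithfully on $T$ and $\theta^m=\mathrm{id}$, the order of $w$ divides $m$. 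I would also record that $\mathfrak{c}\subseteq\mathfrak{t}(1)$: from $\mathfrak{c}\subset\Lie(T_1)$ and $T\subseteq Z_G(\mathfrak{c})=Z_G(T_1)$ one gets $T_1\subseteq Z_G(T)=T$, whence $\mathfrak{c}\subset\Lie(T_1)\cap\mathfrak{g}(1)\subseteq\mathfrak{t}\cap\mathfrak{g}(1)=\mathfrak{t}(1)$.

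In the positive-rank case $\mathfrak{c}\neq 0$, so $\mathfrak{t}(1)\neq 0$. Using the grading $\mathfrak{t}=\bigoplus_i\mathfrak{t}(i)$ of Lemma~\ref{stabletori}, the operator $d\theta|_{\mathfrak{t}}$ is diagonalisable with eigenvalue $\zeta^i$ on $\mathfrak{t}(i)$, so its order is the least common multiple of the orders of the $\zeta^i$ with $\mathfrak{t}(i)\neq 0$. As $\mathfrak{t}(1)\neq 0$, the primitive root $\zeta$ occurs, so $m$ divides the order of $d\theta|_{\mathfrak{t}}$; since that order divides $\mathrm{ord}(w)$ which divides $m$, I conclude $\mathrm{ord}(w)=m$.

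The zero-rank case $\mathfrak{c}=0$ is the one I expect to be the main obstacle, since a priori nothing forbids intermediate-order eigenvalues $\zeta^i$ with $i$ not coprime to $m$, which would give $1<\mathrm{ord}(w)<m$. The resolution is to show $T(0)=T$. When $\mathfrak{c}=0$ we have $Z_{G(0)}(\mathfrak{c})^\circ=G(0)$, so $T(0)$ is a maximal torus of $G(0)=(G^\theta)^\circ$. Now the semisimple representative $g=n_w$ lies in some maximal torus $T_0$ of $G$, and $T_0\subseteq Z_G(g)=G^\theta$; hence $\rank G(0)=\rank G$ and therefore $\dim T(0)=\dim T$. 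Since $T(0)\subseteq Z_G(T(0))=T$ and both are connected of the same dimension, $T(0)=T$; as $T(0)\subseteq G^\theta$ this means $\theta$ fixes $T$ pointwise, so $n_w\in Z_G(T)=T$ and $w=1$. The two cases $\mathfrak{c}\neq 0$ and $\mathfrak{c}=0$ are exhaustive and correspond exactly to the two alternatives in the statement, which completes the argument.
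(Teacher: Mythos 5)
Your proposal is correct and takes essentially the same route as the paper: both obtain $n_w\in N_G(T)$ immediately from the $\theta$-stability of $T$, use ${\mathfrak c}\subseteq{\mathfrak t}(1)$ (forced by the choice of $T$) together with the eigenspace decomposition ${\mathfrak t}=\oplus_i\,{\mathfrak t}(i)$ to equate positive rank with $w=n_wT$ having order exactly $m$, and in the zero-rank case deduce $T(0)=T$, whence $n_w\in Z_G(T)=T$. Your Jordan-decomposition and rank-equality argument merely spells out the detail behind the paper's terse ``by our choice of $T$, $T(0)=T$''.
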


\begin{proof}
Since $T$ is $\theta$-stable, the first statement is obvious.
But if $n_w$ has order less than $m$, then $\{Êt\in {\mathfrak t}\, |\,d\theta(t)=\zeta t\}$ is trivial, and therefore by our choice of $T$, $T(0)=T$ and $n_w\in T$.
\end{proof}

In fact, we can make a more precise statement than this.

\begin{lemma}\label{orders}
Either $n_w\in T$ or $w=n_w T$ has Carter type $\Gamma^{(1)}\times\ldots\times\Gamma^{(l)}$, where the $\Gamma^{(i)}$, $1\leq i\leq l$ are disjoint irreducible diagrams and at least one has order $m$.
\end{lemma}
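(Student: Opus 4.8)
The plan is to read off the order of an irreducible component of the Carter diagram of $w$ directly from the eigenvalues of $w$ on $\mathfrak{t}$, using that a nonzero Cartan subspace supplies an eigenvector for the primitive root $\zeta$. First I would dispose of the trivial alternative: if $n_w\in T$ there is nothing to prove, so assume $n_w\notin T$. By the preceding lemma $w=n_wT$ then has order exactly $m$ and $\theta$ is of positive rank. Since $d\theta=\Ad n_w$ acts on $\mathfrak{t}$ as $w$, the space $U=\{t\in\mathfrak{t}\,|\,w(t)=\zeta t\}$ coincides with $\mathfrak{t}(1)=\mathfrak{t}\cap\mathfrak{g}(1)$, and it contains the nonzero Cartan subspace $\mathfrak{c}\subset\Lie(T_1)\subset\mathfrak{t}$. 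Hence $\zeta$ is an eigenvalue of $w$ acting on $\mathfrak{t}$.

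Next I would translate this into a statement about characteristic polynomials. Writing the admissible diagram of $w$ as a disjoint union $\Gamma=\Gamma^{(1)}\times\cdots\times\Gamma^{(l)}$ of irreducible components, $w$ is a product $w_1\cdots w_l$ of commuting elliptic elements $w_i$ supported on mutually orthogonal subsystems, with $w_i$ of order $o_i$ equal to the order of $\Gamma^{(i)}$; in particular $m=\mathop{\mathrm{lcm}}(o_1,\dots ,o_l)$, so each $o_i\mid m$. Over $\mathbb{Z}$ the characteristic polynomial factors as $c_w(t)=(t-1)^{\dim V^w}\prod_{i=1}^l c_{w_i}(t)$. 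By the lemma above, the minimal polynomial of $w$ on $\mathfrak{t}$ is the reduction $\overline{c_w}$ modulo $p$, so the fact that $\zeta$ is an eigenvalue of $w$ gives $\overline{c_w}(\zeta)=0$ in $k$.

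Finally I would pin down a single component. Since $m\geq 2$ we have $\zeta\neq 1$, so $\zeta$ is a root of $\overline{c_{w_i}}$ for some $i$. As $w_i^{o_i}=\mathrm{id}$, every complex root of $c_{w_i}$ is an $o_i$-th root of unity; reducing modulo $p$ (and using $p\nmid o_i$, which holds since $o_i\mid m$) every root of $\overline{c_{w_i}}$ in $\overline{k}$ is again an $o_i$-th root of unity, whence $\zeta^{o_i}=1$ in $k$. Because $p\nmid m$, the element $\zeta$ has multiplicative order exactly $m$ in $k^\times$, so $\zeta^{o_i}=1$ forces $m\mid o_i$; combined with $o_i\mid m$ this yields $o_i=m$, i.e.\ $\Gamma^{(i)}$ has order $m$. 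The crux, and the only place where positive characteristic must be handled with care, is this last step: it is precisely the hypothesis $p\nmid m$ that keeps the $m$-th roots of unity distinct modulo $p$ and so lets the eigenvalue $\zeta$ of order exactly $m$ detect a component realizing the full order $m$ rather than merely a proper divisor of it (the order of $w$ being only the \emph{lcm} of the $o_i$ a priori).
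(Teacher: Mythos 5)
Your proposal is correct, and it proves the lemma by the same underlying mechanism as the paper: the nonzero Cartan subspace ${\mathfrak c}\subset\Lie(T_1)\subset{\mathfrak t}$ makes the primitive root $\zeta$ an eigenvalue of $w$ on ${\mathfrak t}$, and since $p\nmid m$ keeps the $m$-th roots of unity distinct, only a component $\Gamma^{(i)}$ of full order $m$ can account for it. The technical vehicle, however, is genuinely different. The paper works inside ${\mathfrak t}$ itself: it sets ${\mathfrak t}^{(i)}=\mathrm{span}\{h_\alpha \mid \alpha\in\Phi^{(i)}\}$, uses the linear independence of the roots in $\Gamma$ (Carter) together with simple-connectedness of $G$ to obtain ${\mathfrak t}^{(1)}\oplus\cdots\oplus{\mathfrak t}^{(l)}\subset{\mathfrak t}$, and decomposes the $\zeta$-eigenspace along these subspaces, so that no $w^{(i)}$ of order less than $m$ can contribute to it. You bypass this coroot-lattice argument entirely by invoking the earlier lemma identifying the characteristic polynomial of $w$ on ${\mathfrak t}$ with the mod-$p$ reduction of $c_w(t)$, then factoring $c_w=(t-1)^{\dim V^w}\prod_i c_{w_i}$ over ${\mathbb Z}$ with each $c_{w_i}$ a product of cyclotomic polynomials $\Phi_d$, $d\mid o_i$; this makes the survival of the eigenvalue argument under reduction mod $p$ completely explicit, and it is a clean substitute for the paper's direct-sum step (which is precisely where the paper's proof uses simple-connectedness). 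Two minor remarks. First, your opening claim that the preceding lemma yields ``$\theta$ is of positive rank'' when $n_w\notin T$ is not its literal statement, but it does follow from its proof and the choice of $T(0)$: if ${\mathfrak c}=0$ then $T=Z_G(T(0))$ while $\theta$ fixes $T(0)$ pointwise, forcing $n_w\in T$ --- this is exactly the contrapositive with which the paper's own proof concludes (``${\mathfrak c}$ trivial, hence $w$ trivial by our assumption on $T(0)$''), so there is no gap. Second, the lemma you cite states ``minimal polynomial'' although its proof computes the characteristic polynomial; your argument needs only that every eigenvalue of $w$ on ${\mathfrak t}$ is a root of the reduction of $c_w$, which holds on either reading.
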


\begin{proof}
Let $\Phi^{(i)}$ be the smallest root subsystem of $\Phi$ containing all the elements of $\Gamma^{(i)}$, let ${\mathfrak t}^{(i)}$ be the vector subspace of ${\mathfrak t}$ spanned by all $h_\alpha$ with $\alpha\in\Phi^{(i)}$ and let $w=w^{(1)}\ldots w^{(l)}$, where $w^{(i)}$ is a product of reflections corresponding to the vertices of $\Gamma^{(i)}$.
(Here we define $h_\alpha=d\alpha^\vee(1)$.)
Since the roots in $\Gamma^{(1)}\cup\ldots\cup\Gamma^{(l)}$ are linearly independent (\cite[\S 4]{carter}), and since $G$ is simply-connected, it follows that ${\mathfrak t}$ contains the direct sum ${\mathfrak t}^{(1)}\oplus\ldots\oplus{\mathfrak t}^{(l)}$.
Clearly $w$ preserves each of the subspaces $\Phi^{(i)}$ and acts on $\Phi^{(i)}$ by $w^{(i)}$.
Hence $\{ t\in{\mathfrak t}\, |\, w(t)=\zeta t\}$ is spanned by the subspaces $\{ t\in{\mathfrak t}^{(i)}\, |\, w^{(i)}(t)=\zeta t\}$.
In particular, if none of the $w^{(i)}$ has order $m$ then ${\mathfrak c}$ is trivial, hence so is $w$ by our assumption on $T(0)$.
\end{proof}

Lemma \ref{orders} excludes certain possibilities for $w$, such as an element of type $A_1\times A_2$ (since here $m=6$, but no irreducible subdiagram has order 6).
Later we will see that in fact each of the $\Gamma^{(i)}$ must have order $m$.
(This was already established for the classical case in \cite{thetagroups}.
For outer automorphisms the situation is more complicated.)

\begin{lemma}\label{cox}
Let $G$ be simple and simply-connected group and let $h$ be the Coxeter number of $G$.
Assume the characteristic of the ground field is coprime to $h$.
If $n_w\in N_G(T)$ is such that $w=n_w T$ is a Coxeter element in $W$, then $\Int n_w$ is an automorphism of $G$ of order $h$ and the corresponding Kac diagram is the diagram with 1 on every node.
\end{lemma}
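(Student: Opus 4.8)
I would first observe that the all-ones Kac diagram $\mu_{0}$, with $\mu_{0}(\alpha_{j})=1$ on every node $\alpha_{j}$ ($0\le j\le l$) of the (non-twisted) affine Dynkin diagram $\tilde\Delta$, has order $\sum_{j=0}^{l}a_{j}\mu_{0}(\alpha_{j})=\sum_{j=0}^{l}a_{j}$, where $a_{0}=1$ and $\sum_{j=1}^{l}a_{j}\alpha_{j}=\hat\alpha$ is the highest root. Since the height of $\hat\alpha$ is $h-1$, this sum is $1+(h-1)=h$. Thus the assertion is precisely that $\theta:=\Int n_{w}$ is $\Aut G$-conjugate to $\theta_{\mu_{0}}$, and the plan is to determine $\mathrm{ord}(\theta)$ exactly and then to force every Kac label to equal $1$ by playing the order against the constraint of Lemma \ref{0and1}.

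Next I would fix the order. With the present choice of $T$ (adapted to a Cartan subspace), the opening observation of this section tells us that an inner automorphism $\Int n_{w}$ of zero rank has $n_{w}\in T$, and that when $n_{w}\notin T$ the class $w=n_{w}T$ has order equal to $m=\mathrm{ord}(\theta)$ (cf. Lemma \ref{orders}). A Coxeter element is nontrivial, so $n_{w}\notin T$; hence $\theta$ has positive rank, and since a Coxeter element has order $h$ we conclude $m=h$.

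Then I would run the squeeze. By Lemma \ref{kacgen}, $\theta$ is $\Aut G$-conjugate to an inner Kac automorphism $\theta_{\mu}$, so that $h=\mathrm{ord}(\theta)=\sum_{j=0}^{l}a_{j}\mu(\alpha_{j})$. Because $\theta$ has positive rank, Lemma \ref{0and1} gives $\mu(\alpha_{j})\in\{0,1\}$ for all $j$, whence
\[
h=\sum_{j=0}^{l}a_{j}\mu(\alpha_{j})\le\sum_{j=0}^{l}a_{j}=h.
\]
The inequality is therefore an equality, and since every mark $a_{j}\ge 1$ this can happen only if $\mu(\alpha_{j})=1$ for all $j$. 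Hence $\mu=\mu_{0}$ and $\theta$ is conjugate to the all-ones Kac automorphism, as claimed.

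The genuinely load-bearing step is the determination of the order in the second paragraph: everything hinges on the fact that the torus $T$ was chosen so that a positive-rank inner automorphism is represented by an element $w$ whose order is exactly $\mathrm{ord}(\theta)$ rather than a proper divisor, together with the classical value $h$ for the order of a Coxeter element. Once $\mathrm{ord}(\theta)=h$ and positive rank are secured, no real work remains: the mark identity $\sum_{j=0}^{l}a_{j}=h$ leaves the bound of Lemma \ref{0and1} with no slack, so the all-ones diagram is the only possibility.
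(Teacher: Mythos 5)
There is a genuine gap, and it sits exactly where you locate the ``load-bearing step''. The dichotomy you invoke (the unnamed lemma opening Sect.~4, refined by Lemma \ref{orders}) is proved only under the standing convention of that section, namely that $T=Z_G(T(0))\cap Z_G({\mathfrak c})$ is the torus built from a Cartan subspace ${\mathfrak c}$ of the automorphism under study. In Lemma \ref{cox} no such adaptedness is available: the torus $T$ is arbitrary and the automorphism $\theta=\Int n_w$ is only \emph{defined} once $T$ and $n_w$ are given, so assuming $T$ adapted to $\theta$ is circular -- and the lemma is indeed applied later (Lemmas \ref{cox2} and \ref{coxpowers}) to representatives on arbitrary maximal tori. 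For a non-adapted torus the inference ``$n_w\notin T\Rightarrow\theta$ has positive rank'' fails: a zero-rank automorphism of order $m$ can perfectly well act on some $\theta$-stable maximal torus as a nontrivial Weyl element $w''$, as long as the eigenvalues of $w''$ on ${\mathfrak t}$ avoid the primitive $m$-th roots of unity (e.g.\ an involution when $m=6$). For the same reason your determination $\mathrm{ord}(\theta)=h$ is unsupported: a priori one only knows $h\mid\mathrm{ord}(\Int n_w)$ and $n_w^h\in T^w$, a finite subgroup that is not obviously central, so the order could exceed $h$.

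These two missing ingredients are precisely where the paper's proof does its real work. First it proves $n_w^h\in Z(G)$ by the telescoping computation $t_{i+1}=t_i\cdot\alpha_{i+1}^\vee(\alpha_i(t_i))$, using simple connectedness (the coroots form a basis of $Y(T)$, so $\prod_i\alpha_i^\vee(c_i)=1$ forces all $c_i=1$); this pins the order at exactly $h$ for every representative on every torus. Second, it establishes positive rank: using Lemma \ref{fpfree} it shows all Coxeter automorphisms form a single conjugacy class, and then exhibits the non-nilpotent element $c=e+e_{-\hat\alpha}\in{\mathfrak g}(1)$ for the all-ones Kac automorphism, via the KW-section $e+{\mathfrak m}$ for the adjoint quotient. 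Your final squeeze -- Lemma \ref{kacgen}, Lemma \ref{0and1}, and $\sum_{j}a_j=h$ with every $a_j\geq 1$ -- is correct and coincides with the paper's closing sentence (``the only Kac automorphism of order $h$ with coefficients $1$ or $0$''), so your proposal can be repaired: first prove $n_w^h\in Z(G)$ as above, and then obtain positive rank cheaply, e.g.\ by noting that once $m=h$ the exponent $1$ of the Coxeter element makes $\zeta$ an eigenvalue of $w$ on ${\mathfrak t}$ (by the characteristic-polynomial reduction lemma of Sect.~3, since $p\nmid h$), so ${\mathfrak t}\cap{\mathfrak g}(1)$ contains nonzero semisimple elements. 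As written, however, both the order claim and the positive-rank claim rest on an inapplicable lemma.
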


\begin{proof}
For the first part we just have to show that $n_w^h\in Z(G)$.
Clearly $t=n_w^h$ satisfies $w(t)=t$.
We claim that this implies $\alpha_i(t)=1$ for each $i$, hence $t=Z(G)$.
To prove the claim, we may assume that $w=s_{\alpha_r}\ldots s_{\alpha_1}$ where $\{\alpha_1,\ldots,\alpha_r\}$ is a basis of simple roots of $\Phi(G,T)$.
Let $t_0=t$, $t_i=s_{\alpha_i}\ldots s_{\alpha_1}(t)$ for $1\leq i\leq r$.
Then $t_{i+1}=t_i\cdot \alpha_{i+1}^\vee(\alpha_i(t_i))$.
Hence $t=t\cdot \alpha_r^\vee(\alpha_r(t_{r-1}))\ldots \alpha_1^\vee(\alpha_1(t))$.
Since $G$ is simply-connected, this implies that $\alpha_r(t_{r-1})=\ldots=\alpha_1(t)=1$.
But then $t=t_1=\ldots=t_{r-1}$, thus $\alpha_i(t)=1$ for $1\leq i\leq r$.
Therefore $t\in Z(G)$.
Now, since any two maximal tori of $G$ are conjugate, any two Coxeter elements in $W$ are conjugate, and any two representatives of a Coxeter element $w$ are $T$-conjugate by Lemma \ref{fpfree}, it follows that there is a unique conjugacy class ${\cal C}$ of automorphisms of $G$ which act as a Coxeter element on some maximal torus.
We claim that the Kac automorphism corresponding to the Kac diagram with 1 on every node is of positive rank.
Indeed, it follows from the fact that $p\nmid h$ that $G$ satisfies the standard hypotheses.
Thus we can construct a KW-section for the adjoint representation as follows: let $e=\sum_{\alpha\in\Delta}e_\alpha$ be a regular nilpotent element, let $\lambda:k^\times\rightarrow T$ be the unique cocharacter such that $\alpha(\lambda(t))=t^2$ for $\alpha\in\Delta$ and let ${\mathfrak m}$ be an $\Ad \lambda(k^\times)$-stable subalgebra of ${\mathfrak u}_-$ such that ${\mathfrak g}={\mathfrak u}_-\oplus[e,{\mathfrak g}]$.
(Here ${\mathfrak u}_-=\sum_{\alpha\in -\Phi^+}{\mathfrak g}_\alpha$.)
Then $e+{\mathfrak m}$ is a KW-section and hence $e+m$ is non-nilpotent for any non-zero $m\in{\mathfrak m}$.
Now, since ${\mathfrak m}$ is $\Ad\lambda(k^\times)$-stable, we must have $e_{-\hat\alpha}\in{\mathfrak m}$, and therefore $c=e+e_{-\hat\alpha}\in{\mathfrak g}(1)\cap(e+{\mathfrak m})$.
Since $c$ is not nilpotent, we deduce that the Kac automorphism with all coefficients equal to 1 is positive rank.
Since it is the only Kac automorphism of order $h$ with coefficients $1$ or $0$, the lemma follows.
\end{proof}

We will say that an automorphism which is conjugate to $\Int n_w$, where $n_w\in N_G(T)$ represents a Coxeter element in $W$, is a {\it Coxeter automorphism}.
By Lemma \ref{cox}, the Coxeter automorphisms form a single conjugacy class in $\Int G$.
The following lemma is presumably known, but we could not find a clear reference in the literature.

\begin{lemma}\label{cox2}
Let $n_w\in N_G(T)$ be such that $n_w T$ represents a Coxeter element in $W$.
Assume $p$ is coprime to $h$ as before.
Then $n_w^h$ is as follows:

a) $G=\SL(n,k)$, then $h=n$ and $n_w^h=I$ if $n$ is odd, $n_w^h=-I$ if $n$ is even.

b) $G=\Spin(2n+1,k)$, then $h=2n$ and $n_w^h=1$ if $n\equiv 0$ or $3$ modulo 4, while $n_w^h=\alpha_n^\vee(-1)$ otherwise.

c) $G=\Sp(2n,k)$, then $h=2n$ and $n_w^h=-I$.

d) $G=\Spin(2n,k)$ then $h=2(n-1)$ and $n_w^h=1$ if $n\equiv 0$ or $1$ modulo 4, while $n_w^h=\alpha_{n-1}^\vee(-1)\alpha_n^\vee(-1)$ otherwise.

e) $G$ of type $G_2$ (resp. $F_4$, $E_6$, $E_8$) then $h=6$ (resp. $h=12$, $h=12$, $h=30$) and $n_w^h=1$.

g) $G$ of type $E_7$, then $h=18$ and $n_w^h=\alpha_2^\vee(-1)\alpha_5^\vee(-1)\alpha_7^\vee(-1)$.
\end{lemma}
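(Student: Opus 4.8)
The plan is to prove the uniform identity $n_w^h=(2\rho^\vee)(-1)$, where $2\rho^\vee=\sum_{\alpha\in\Phi^+}\alpha^\vee$, and then to extract the listed values type by type.

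I would begin with two reductions. By Lemma~\ref{cox}, $n_w^h$ lies in $Z(G)$, and by Lemma~\ref{fpfree} it is independent of the choice of representative $n_w$, since a Coxeter element has eigenvalues $\zeta^{m_i}$ ($0<m_i<h$) on ${\mathfrak t}$ and so no nonzero fixed vector, hence finitely many fixed points on $T$. Taking $n_w=n_{\alpha_1}\cdots n_{\alpha_r}$ with $n_{\alpha_i}^2=\alpha_i^\vee(-1)$, the power $n_w^h$ is a word in the commuting involutions $\alpha_i^\vee(-1)$; in particular it is $2$-torsion, and --- being computed inside the Tits group, which is defined over ${\mathbb Z}$ --- its expression as such a word is independent of the characteristic, so I may assume $k={\mathbb C}$. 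This already settles several cases: for $G_2$, $F_4$, $E_8$ the centre is trivial, and for $E_6$ the centre $\mu_3$ has no nontrivial $2$-torsion, so $n_w^h=1$, which is (e). For type $A_{n-1}$ the monomial representative $M$ of the $n$-cycle in $\SL(n,k)$ with $Me_i=e_{i+1}$ ($i<n$) and $Me_n=(-1)^{n-1}e_1$ has $M^n=(-1)^{n-1}I$, giving (a).

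For the central identity itself, first note $(2\rho^\vee)(-1)\in Z(G)$ because $\langle\alpha,2\rho^\vee\rangle=2\,\mathrm{ht}(\alpha)$ is even, so it acts trivially under $\Ad$. When $-1\in W$ --- equivalently all exponents $m_i$ are odd, covering $B_n$, $C_n$, $D_n$ with $n$ even, $E_7$ (and $E_8$, $F_4$, $G_2$) --- I would argue as follows. Since $c^{h/2}$ has eigenvalues $(-1)^{m_i}=-1$, we have $c^{h/2}=w_0=-1$, so $n_w^h=(n_w^{h/2})^2$ is the square of a representative of $-1$. For $t\in T$ one has $(t\dot w_0)^2=t\,w_0(t)\,\dot w_0^2=\dot w_0^2$, so this square is independent of the representative; taking the representative $\sigma=\phi\!\left(\begin{smallmatrix}0&1\\-1&0\end{smallmatrix}\right)$ from a principal homomorphism $\phi\colon\SL(2,k)\to G$ with $d\phi\,\diag(1,-1)=2\rho^\vee$ --- which represents $-1$ exactly because all $m_i$ are odd --- gives $n_w^h=\sigma^2=\phi(-I)=(2\rho^\vee)(-1)$.

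Granting the identity, the lemma becomes the computation of the parities of the coordinates of $2\rho^\vee$ in the simple coroots. In type $B_n$ every coordinate but the last is even and the last is $n(n+1)/2$, odd iff $n\equiv1,2\pmod4$, which is (b); in type $D_n$ the coordinates of $\alpha_{n-1}^\vee$ and $\alpha_n^\vee$ are both $n(n-1)/2$ and the rest even, giving (d); type $C_n$ gives $\phi(-I)=-I$ in the defining representation $\mathrm{Sym}^{2n-1}$, which is (c); and for $E_7$ in the Bourbaki labelling the odd coordinates are exactly those of $\alpha_2^\vee,\alpha_5^\vee,\alpha_7^\vee$, which is (g). The main obstacle is type $D_n$ with $n$ odd: here $-1\notin W$, so the principal-$\SL(2,k)$ argument is unavailable, the centre is $\mu_4$, and one must decide whether the $2$-torsion element $n_w^h$ is the element of order $2$ (this happens iff $n\equiv3\pmod4$) or the identity. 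This, together with the concrete evaluation of $(2\rho^\vee)(-1)$ inside $\Spin$ for types $B$ and $D$, amounts to tracking the central $\mu_2$ through the spin cover, and is where the delicate sign bookkeeping concentrates.
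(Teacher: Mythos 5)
Your argument is sound in every case except $G=\Spin(2n,k)$ with $n$ odd, and it takes a genuinely different route from the paper. The paper normalizes the Coxeter automorphism via Lemma \ref{cox}: $\Int n_w$ is conjugate to the Kac automorphism $\Int t$ with $\alpha_i(t)=\zeta$ for every simple root, i.e.\ $t=(2\rho^\vee)(\xi)$ with $\xi^2=\zeta$, and then computes $t^h$ type by type from the coordinates of $2\rho^\vee$ in the simple coroots (the exponents $34,49,66,96,75,52,27$ in its $E_7$ computation are exactly those coordinates); $E_6$ is handled by a separate trick, using an outer automorphism that swaps the two nontrivial central elements. You instead work in the extended Weyl (Tits) group over ${\mathbb Z}$ to see that $n_w^h$ is a characteristic-independent word in the commuting involutions $\alpha_i^\vee(-1)$ --- in particular $2$-torsion, which disposes of $E_6$ more cleanly than the paper does --- and then, when $-1\in W$, identify $n_w^h=\sigma^2=(2\rho^\vee)(-1)$ via the representative-independence of squares of lifts of $w_0$ and the principal $\SL(2)$. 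Each step there checks out: $\sigma$ normalizes $T$ because $2\rho^\vee$ is a regular cocharacter, so $Z_G(\im 2\rho^\vee)=T$; the image of $\sigma$ in $W$ sends the dominant chamber to its opposite, hence is $w_0$; and $(t\dot w_0)^2=\dot w_0^2$ when $w_0=-1$. Your parity computations are also correct ($c_n=n(n+1)/2$ and $c_i=i(2n+1-i)$ even for $i<n$ in $B_n$; $c_{n-1}=c_n=n(n-1)/2$ in $D_n$; odd coordinates exactly at $\alpha_2,\alpha_5,\alpha_7$ in $E_7$).

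The genuine gap is case (d) for $n$ odd, which you flag but do not prove: there $-1\notin W$, so nothing in your argument decides whether the $2$-torsion element $n_w^h\in Z(G)\cong\mu_4$ is trivial or the order-$2$ element $\alpha_{n-1}^\vee(-1)\alpha_n^\vee(-1)$, and that dichotomy ($n\equiv 1$ versus $n\equiv 3\pmod 4$) is exactly what (d) asserts. You can close it without any Clifford-algebra bookkeeping by borrowing the paper's normalization, which in fact proves your uniform identity in all types at once: by Lemma \ref{cox}, $n_w$ is conjugate to $tz$ with $z\in Z(G)$ and $t=(2\rho^\vee)(\xi)$, where $\xi$ has order $2h$ (if $\xi^h=1$ then $\zeta^{h/2}=1$, contradicting primitivity since $h$ is even here); hence $n_w^h=(2\rho^\vee)(\xi^h)\,z^h=(2\rho^\vee)(-1)\,z^h$, and $z^h=1$ because for $n$ odd $h=2(n-1)\equiv 0\pmod 4$ kills $\mu_4$ (in all the other types $z^h=1$ is equally immediate). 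Your parity computation $c_{n-1}=c_n=n(n-1)/2$ then yields (d) for all $n$. In short, the identity $n_w^h=(2\rho^\vee)(-1)$ does hold in every type --- your type-$A$ computation and the $E_6$ vanishing are consistent with it --- but your principal-$\SL(2)$ proof establishes it only when all exponents are odd, so as written the proposal is incomplete precisely at $D_n$, $n$ odd.
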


\begin{proof}
By Lemma \ref{cox}, $n_w^h\in Z(G)$ in each case.
Let $\zeta$ be a primitive $h$-th root of unity.
Types $A$ or $C$ can be calculated directly.
If $G$ is of type $G_2$, $F_4$ or $E_8$ then $Z(G)$ is trivial.
If $G$ is of type $B_n$, then we use Lemma \ref{cox}: $\Int n_w$ is conjugate to the Kac automorphism which sends each $e_\alpha$, $\alpha\in\Delta$ to $\zeta e_\alpha$, where $\zeta$ is a primitive $h$-th root of unity.
But this automorphism is just $\Int t$, where $t=\prod_{i=1}^{n-1}\alpha_i^\vee(\zeta^{in-i(i-1)/2})\cdot \alpha_n^\vee(\xi^{n(n+1)/2})$, $\xi$ a square-root of $\zeta$.
Thus $t^h$ is as described.
The calculation for type $D_n$ is similar: $\Int n_w$ is conjugate to $\Int t$, where $t=\prod_{i=0}^{n-2}\alpha_i^\vee(\zeta^{i(n-1)-i(i-1)/2})\cdot\alpha_{n-1}^\vee(\xi^{n(n-1)/2})\alpha_n^\vee(\xi^{n(n-1)/2})$ and thus $t^h=1$ if and only if $n\equiv 0$ or $1$ modulo 4.
If $G$ is of type $E_6$ then let $\gamma$ be an outer automorphism of $G$.
Then $\gamma$ permutes the two non-identity elements of $Z(G)$.
On the other hand, $\Int \gamma(n_w)\in{\cal C}$ and thus $\gamma(n_w^{12})=n_w^{12}$, whence $n_w^{12}=1$.
Finally, if $G$ is of type $E_7$ then $w$ is of order 18 and $\Int n_w$ is conjugate to $\Int t$, where $t=\alpha_1^\vee(\xi^{34})\alpha_2^\vee(\xi^{49})\alpha_3^\vee(\xi^{66})\alpha_4^\vee(\xi^{96})\alpha_5^\vee(\xi^{75})\alpha_6^\vee(\xi^{52})\alpha_7^\vee(\xi^{27})$, and thus $n_w^{18}=t^{18}=\alpha_2^\vee(-1)\alpha_5^\vee(-1)\alpha_7^\vee(-1)$.
\end{proof}

Recall that an automorphism $\theta$ is N-regular if ${\mathfrak g}(1)$ contains a regular nilpotent element of ${\mathfrak g}$, and that an automorphism $\theta$ of order $m$ is of {\bf maximal rank} if $\dim{\mathfrak c}\cdot\varphi(m)=\rank G$, where $\varphi$ denotes the Euler number of $m$.

\begin{lemma}\label{coxpowers}
Let $w$ be one of the Weyl group elements on the following list.
Then all representatives $n_w$ of $w$ in $N_G(T)$ are $T$-conjugate and, modulo the centre of $G$, have the same order as $w$.
The (unique) Kac diagram corresponding to $\Int n_w$, the order of $w$ (and thus of $\Int n_w$) and the rank of $\Int n_w$ are as given below.

a) $G$ of type $G_2$: $w$ of type $G_2$, $A_2$ or $A_1\times \tilde{A_1}$.
The respective Kac diagrams are: $111$, $011$ and $010$, the respective orders of $w$ are 6, 3 and 2 and the respective ranks of $\Int n_w$ are 1, 1 and 2.

b) $G$ of type $F_4$: $w$ of type $F_4$, $F_4(a_1)$, $D_4(a_1)$, $A_2\times\tilde{A_2}$ and $A_1^4$.
The corresponding Kac diagrams are: $11111$, $10101$, $10100$, $00100$ and $01000$, the respective orders of $w$ are 12, 6, 4, 3 and 2 and the respective ranks are 1, 1, 2, 2, and 4.

Each such automorphism is N-regular.
\end{lemma}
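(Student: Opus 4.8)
The plan is to establish the bundled assertions---that all representatives $n_w$ are $T$-conjugate, that $n_w^m$ is central, the order, the Kac diagram, the rank, and finally N-regularity---proceeding case by case against Carter's tables but organising the work around one observation: for $G_2$, and for all but (possibly) one entry in $F_4$, the element $w$ is a power $w_0^{\,j}$ of a Coxeter element $w_0$ with $j\mid h$. Since the rank and N-regularity are invariants of the $\Aut G$-conjugacy class, I would feel free to compute each in whichever realisation of $\theta=\Int n_w$ is convenient.

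First I would check that each $w$ is \emph{elliptic}, i.e. that $1$ is not an eigenvalue of $w$ on $\mathfrak t$. The characteristic polynomial $c_w(t)$ of each entry is, by \cite[Table 8]{carter} (and elementarily for $G_2$), a product of cyclotomic polynomials $\Phi_d$ with $d>1$; by the lemma identifying the minimal polynomial of $w$ on $\mathfrak t$ with the reduction of $c_w(t)$ modulo $p$, ellipticity persists in characteristic $p$. Hence $w$ has only finitely many fixed points on $T$, and Lemma \ref{fpfree} gives at once that all representatives $n_w$ are $T$-conjugate and share a common $m$-th power. As $G$ is of type $G_2$ or $F_4$ it is adjoint, so $Z(G)=\{e\}$ and ``the same order as $w$ modulo the centre'' just means $n_w^{\,m}=e$. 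For the Coxeter powers this is immediate: taking $n_w=n_{w_0}^{\,j}$ one has $n_w^{\,m}=n_{w_0}^{\,jm}=n_{w_0}^{\,h}\in Z(G)=\{e\}$ by Lemma \ref{cox}. In the remaining situation it follows from the identification of $\theta$ with a Kac automorphism of order $m$ (below) together with triviality of the centre.

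The orders are those listed by Carter. For the rank I would use that, for the torus $T$ fixed in this section, $\mathfrak c=\mathfrak t\cap\mathfrak g(1)$, so that $\dim\mathfrak c$ is the multiplicity of the chosen primitive $m$-th root $\zeta$ as an eigenvalue of $w$ on $\mathfrak t$, equivalently the exponent of $\Phi_m$ in $c_w(t)$; this too is read off the tables. To determine the Kac diagram, note that on $G_2$ and $F_4$ every automorphism is inner and, $n_w$ being semisimple of order prime to $p$, so is $\theta$; hence by Lemma \ref{kacgen} it is $\Aut G$-conjugate to a unique Kac automorphism $\theta_\mu$. I would pin down $\mu$ by matching two invariants with the candidate diagram: the order, and the type of the reductive fixed-point group $G(0)=Z_G(n_w)^\circ$, which by Corollary \ref{pseudocor} is the pseudo-Levi whose root system is carried by the zero-nodes of the affine diagram. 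Since the zero-node subsystems of distinct diagrams of a given order differ, this fixes $\mu$, and Lemma \ref{0and1} forces its labels into $\{0,1\}$.

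Finally, for N-regularity I would exhibit a regular nilpotent of $\mathfrak g$ lying in $\mathfrak g(1)$, passing to the principal realisation. Let $\eta\colon k^\times\to T$ be the cocharacter with $\alpha_i(\eta(z))=z$ for every simple root; then $\theta_0=\Int\eta(\zeta_h)$ is the Coxeter automorphism (the all-ones Kac diagram), fixing $\mathfrak t$ and acting on $e_{\alpha_i}$ by $\zeta_h$. For $\theta=\theta_0^{\,j}$ with $j\mid h$ one computes $\theta(e_{\alpha_i})=\zeta_h^{\,j}e_{\alpha_i}=\zeta\,e_{\alpha_i}$, so \emph{every} simple root vector lies in $\mathfrak g(1)$ and the principal nilpotent $e=\sum_{\alpha\in\Delta}e_\alpha$ is already a regular nilpotent in $\mathfrak g(1)$; hence $\theta$ is N-regular, and in particular of positive rank (and admits a KW-section by \cite{panslice}). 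I expect the genuine obstacle to be the one $F_4$ entry that is not a clean Coxeter power: there $\mathfrak g(1)$ need not contain a full set of simple-root vectors, so one must instead write down a regular nilpotent as an explicit combination of the degree-one root vectors supplied by the diagram and verify regularity by checking $\dim\mathfrak z_{\mathfrak g}(e)=\rank G$. Reconciling this nilpotent, the order, and the tabulated rank with the Kac diagram is the point at which the case-by-case bookkeeping becomes unavoidable.
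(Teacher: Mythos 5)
Your proposal is correct and follows essentially the same route as the paper: inspection of Carter's tables shows each listed $w$ is an elliptic, maximal-rank power of the Coxeter element, Lemma \ref{fpfree} gives $T$-conjugacy of representatives and a common $m$-th power (trivial since $Z(G)=\{e\}$ by Lemma \ref{cox}), the realisation $e_{\pm\alpha}\mapsto\zeta^{\pm l}e_{\pm\alpha}$ pins down $\mathfrak{g}(0)$ and hence the $\{0,1\}$-labelled Kac diagram, and N-regularity is immediate because all simple root vectors land in $\mathfrak{g}(1)$. Your hedge about ``the one $F_4$ entry that is not a clean Coxeter power'' is unnecessary: all five $F_4$ classes (and all three $G_2$ classes) are powers $w_0^{j}$ of the Coxeter element ($j=1,2,3,4,6$ in $F_4$), so the uniform argument covers every case, exactly as in the paper's proof.
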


\begin{proof}
Each $w$ listed in the Lemma is of maximal rank and is a power of the Coxeter element by inspection of \cite[Table 3]{carter}.
(In particular, the orders and ranks are as given in the lemma.)
Thus all representatives of $w$ are $T$-conjugate by Lemma \ref{fpfree}.
It remains to check that the Kac diagrams are as stated.
For the Coxeter elements this is Lemma \ref{cox2}.
Suppose on the other hand that $l$ divides the Coxeter number.
Then the $l$-th power of a Coxeter automorphism is conjugate to the automorphism which sends $e_{\pm\alpha}$ ($\alpha$ simple) to $\zeta^{\pm l} e_{\pm\alpha}$, where $\zeta$ is a primitive $h$-th root of unity ($h$ the Coxeter number).
(We note that since $p$ is good it doesn't divide $h$ in type $G_2$ or $F_4$.)
In particular, ${\mathfrak g}(0)$ is conjugate to the subalgebra spanned by ${\mathfrak t}$ and all root subspaces ${\mathfrak g}_\alpha$ where $\alpha$ has length divisible by $l$.
Thus, inspecting the possibilities for automorphisms of order $h/l$, it is straightforward to determine the Kac diagram in each case.

Type $A_2$ in $G_2$: this is the square of the Coxeter element, and hence ${\mathfrak g}(0)$ is conjugate to ${\mathfrak t}\oplus{\mathfrak g}_{\pm{(2\alpha_1+\alpha_2)}}$.

Type $A_1\times\tilde{A_1}$ in $G_2$: this is the cube of a Coxeter element, and hence ${\mathfrak g}(0)$ is conjugate to ${\mathfrak t}\oplus{\mathfrak g}_{\pm(\alpha_1+\alpha_2)}\oplus{\mathfrak g}_{\pm(3\alpha_1+\alpha_2)}$.

Type $F_4(a_1)$ in $F_4$: this is the square of a Coxeter element and hence ${\mathfrak g}(0)$ is conjugate to ${\mathfrak t}\oplus{\mathfrak g}_{\pm 1221}\oplus{\mathfrak g}_{\pm 1122}$.

Type $D_4(a_1)$ in $F_4$: this is the cube of a Coxeter element and hence after conjugation ${\mathfrak g}(0)={\mathfrak t}\oplus{\mathfrak g}_{\pm 1111}\oplus{\mathfrak g}_{\pm 1120}\oplus{\mathfrak g}_{\pm 0121}\oplus{\mathfrak g}_{\pm 1232}$.
Here $1111$ and $0121$ generate a root system of type $\tilde{A_2}$, while $1120$ is a long root, thus ${\mathfrak g}(0)$ is of type $\tilde{A_2}\times A_1$.

Type $A_2\times \tilde{A_2}$ in $F_4$: this is a Coxeter element raised to the fourth power, hence ${\mathfrak g}(0)$ is conjugate to ${\mathfrak t}\oplus{\mathfrak g}_{\pm 1110}\oplus{\mathfrak g}_{\pm 0120}\oplus{\mathfrak g}_{\pm 0111}\oplus{\mathfrak g}_{\pm 1221}\oplus{\mathfrak g}_{\pm 1122}\oplus{\mathfrak g}_{\pm 1242}$.
Here $1110$, $0111$ generate a root subsystem of $\Phi$ of type $\tilde{A_2}$ while $0120$, $1122$ generate a root system of type ${A_2}$.

Type $A_1^4$ in $F_4$: this is the sixth power of a Coxeter element, and hence ${\mathfrak g}(0)$ is conjugate to the subalgebra spanned by ${\mathfrak t}$ and all root spaces ${\mathfrak g}_\alpha$ where $\alpha$ is of even length.
But the roots of even length have basis: 1100, 0011, 0110 and 1120, where the first three span a subsystem of type $C_3$ and 1120 is a long root.

It is now easily seen that the only possible Kac diagrams with coefficients 0 and 1 which could correspond to these automorphisms are those stated in the lemma.
\end{proof}

In type $G_2$ the only remaining non-trivial conjugacy classes in $W$ are involutions.
Moreover, it is easy to see from Lemma \ref{kacgen} (or see, for example \cite{springerinvs}) that there is a unique class of involution of a simple group of type $G_2$ and hence Lemma \ref{coxpowers} gives us a complete list of positive rank automorphisms.
(See Table 1.)

In type $F_4$, we are reduced to studying elements of the Weyl group of the following types: $A_2$, $\tilde{A}_2$ (order 3), $B_2$, $B_2\times A_1$, $A_3$, $A_3\times\tilde{A_1}$ (order 4), $C_3$, $B_3$, $C_3\times A_1$, $D_4$ (order 6), $B_4$ (order 8).

Consider an admissible diagram $\Gamma=\Gamma'\cup\Gamma''$, where $\Gamma'$ is a union of irreducible subdiagrams of order $m$, and $\Gamma''$ is a union of irreducible subdiagrams of order less than $m$.
As before let $\Phi_1$ be the smallest root subsystem of $\Phi$ containing all of the roots in $\Gamma'$, and let $\Phi_2$ be the set of roots in $\Phi$ which are orthogonal to all elements of $\Phi_1$.
Let $L_i$ ($i=1,2$) be the subgroup of $G$ generated by all root subgroups $U_\alpha$, $\alpha\in\Phi_i$ and let $S$ be the torus generated by all $\alpha^\vee(k^\times)$, $\alpha\in\Phi_1$.
Clearly $S$ is a maximal torus of $L_1$ and $Z_G(S)'=L_2$.

Let $w=w'w''$ be an element in the conjugacy class corresponding to $\Gamma$, where $w'$ corresponds to $\Gamma'$ and $w''$ corresponds to $\Gamma''$.
Suppose $\theta=\Int n_w$, where $n_w T=w\in W$.
Since $T$ and $S$ are $\theta$-stable, it follows that $L_1$ and $L_2$ are $\theta$-stable.
Moreover, $w|_S=w'|_S$ and $(S^w)^\circ$ is trivial, thus there exists $n_{w'}\in N_{L_1}(S)$ such that $\theta|_{S}=\Int n_{w'}|_{S}$.
Since $(\Int n_{w'}^{-1}\circ\theta)|_{L_1}$ acts trivially on $S$, we can replace $n_{w'}$ by an element of the form $n_{w'}s$, $s\in S$ such that $\theta|_{L_1}=\Int n_{w'}|_{L_1}$.
Therefore $n_w=n_{w'}g$ for some $g\in Z_G(L_1)$.
Recall that we assume that $T(0)$ is maximal in $Z_G({\mathfrak c})^\theta$.

\begin{lemma}\label{orthog}
$w''$ is trivial.
\end{lemma}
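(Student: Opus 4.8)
The plan is to argue by contradiction: assuming $w''\neq 1$, I would choose a root $\beta$ that is a node of one of the irreducible components of $\Gamma''$ and show that the associated root subgroup $U_\beta$ is forced to lie inside the maximal torus $T=Z_G(T(0))\cap Z_G(\mathfrak c)$, which is absurd. The point that singles out $\Gamma''$ (rather than $\Gamma'$) is that the nodes of $\Gamma''$, being in a component of the admissible diagram disjoint from $\Gamma'$, are orthogonal to every root of $\Gamma'$ and hence to all of $\Phi_1$; thus such a $\beta$ lies in $\Phi_2$.

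First I would check that $U_\beta\subseteq Z_G(\mathfrak c)$. Note that $\mathfrak c$ centralises both $T(0)$ and itself, so $\mathfrak c\subseteq\Lie\bigl(Z_G(T(0))\cap Z_G(\mathfrak c)\bigr)=\mathfrak t$; being contained in $\mathfrak g(1)$ it lies in the $\zeta$-eigenspace $U=\{t\in\mathfrak t\mid w(t)=\zeta t\}$. By Lemma \ref{w0prep} (and its proof) $U\subseteq\mathfrak t_{\Phi_1}=\Lie S$, so $\mathfrak c\subseteq\Lie S$. Since $\beta$ is orthogonal to $\Phi_1$, the differential $d\beta$ vanishes on $\Lie S\supseteq\mathfrak c$, whence $U_\beta$ centralises $\mathfrak c$ and $U_\beta\subseteq Z_G(\mathfrak c)$.

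The second and principal step is to prove $\beta|_{T(0)}=1$, i.e. $U_\beta\subseteq Z_G(T(0))$. Here $T(0)=(T^\theta)^\circ$ (as $T(0)$ is a maximal torus of $Z_{G(0)}(\mathfrak c)^\circ$ and $T$ is a $\theta$-stable torus containing it), and since $\theta=\Int n_w$ acts on $\mathfrak t$ as $w$ we have $\Lie T(0)=\mathfrak t^w$. In Carter's set-up $w$ fixes the orthogonal complement of the span of its nodes pointwise, and the nodes are linearly independent \cite[\S 4]{carter}; comparing dimensions gives $\mathfrak t^w=(\mathrm{span}\,\Gamma)^{\perp}$, so every node, in particular $\beta$, is orthogonal to $\mathfrak t^w$. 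The main obstacle is that in characteristic $p$ the vanishing of $d\beta$ on $\mathfrak t^w$ shows only that $\beta|_{T(0)}$ is a $p$-th power, not that it is trivial. I would resolve this by running the orthogonality statement on the cocharacter lattice instead: one has $Y(T(0))=Y(T)^w$, and the condition $\langle\beta,\lambda\rangle=0$ for every $w$-fixed $\lambda\in Y(T)$ is a property of the root datum, independent of the characteristic, hence may be verified in the real reflection representation where it is exactly Carter's identity $\beta\perp V^w$. Therefore $\beta$ restricts trivially to $T(0)$.

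Combining the two steps yields $U_\beta\subseteq Z_G(T(0))\cap Z_G(\mathfrak c)=T$, which is impossible since $T$ is a torus and $U_\beta\cong\mathbb{G}_a$ is unipotent. This contradiction forces $\Gamma''=\emptyset$, that is $w''=1$. I expect the only genuinely delicate point to be the characteristic-$p$ gap between ``trivial differential'' and ``trivial character,'' which is precisely why the argument must be transported to $Y(T)$ and rely on the characteristic-independence of the combinatorial data underlying Carter's classification (in line with the earlier lemmas matching minimal and characteristic polynomials of Weyl group elements with their reductions mod $p$).
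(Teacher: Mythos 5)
Your proof is correct, but it takes a genuinely different route from the paper's. The paper argues through representatives: it factors $n_w=n_{w'}g$ with $g\in Z_G(L_1)$, shows $g$ is semisimple, conjugates it by an element of $L_2$ into $S'=(T^{w'})^\circ$, and concludes that $\theta$ is conjugate to $\Int(n_{w'}s')$, an automorphism acting on $T$ as $w'$; the maximality of $T(0)$ then forces $w=w'$. You instead obtain a direct contradiction: a node $\beta$ of $\Gamma''$ pairs to zero both with ${\mathfrak c}$ (infinitesimally, since ${\mathfrak c}$ lies in the span of the $h_\alpha$, $\alpha\in\Phi_1$, and $\langle\beta,\alpha^\vee\rangle=0$) and with $T(0)$ (on the cocharacter lattice), so $U_\beta\subseteq Z_G(T(0))\cap Z_G({\mathfrak c})=T$, a nontrivial unipotent subgroup of a torus. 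Both arguments consume the maximality of $T(0)$, but yours does so only through the standing facts of the section --- that $T=Z_G(T(0))\cap Z_G({\mathfrak c})$ is a maximal torus and, as you correctly verify, $T(0)=(T^\theta)^\circ$ --- whereas the paper must re-run a conjugation with some delicate bookkeeping ($Z_G(L_1)=Z'\cdot Z_{L_2}(L_1)$, semisimplicity of $g$, etc.). Your transfer of Carter's identity $\operatorname{Im}(1-w)=(V^w)^\perp$ to $Y(T)$ is exactly the right fix for the characteristic-$p$ gap between trivial differential and trivial character: $Y(T)^w$ is saturated in $Y(T)$, so $Y(T(0))=Y(T)^w$, and the integer $\langle\beta,\lambda\rangle$ vanishes because $\beta^\vee$ lies in the span of the node coroots, orthogonal to the $w$-fixed subspace. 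What the paper's longer argument buys is the explicit statement $\theta\sim\Int(n_{w'}s')$ with $\theta|_{L_1}=\Int n_{w'}|_{L_1}$, i.e. a reduction to the subgroup $L_1$ that is reused downstream (in the little Weyl group computations and the KW-sections of Thm.~\ref{main}), which your contradiction does not produce. One small citation repair: the containment ${\mathfrak c}\subseteq{\mathfrak t}_{\Phi_1}$ with $\Phi_1$ built from $\Gamma'$ alone is not literally Lemma~\ref{w0prep}, whose $\Phi_1$ is spanned by the \emph{whole} diagram $\Gamma$ and hence contains your $\beta$; what you actually need is the eigenspace decomposition in the proof of Lemma~\ref{orders}: the components of $\Gamma''$ have order $<m$, so their eigenvalues omit the primitive $m$-th roots of unity and $\{t\in{\mathfrak t}\mid w(t)=\zeta t\}$ is supported on the order-$m$ components. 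With that reference corrected, the argument is complete.
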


\begin{proof}
We have $g\in Z_G(L_1)=Z'\cdot Z_{L_2}(L_1)$ where $Z'=\{ t\in T\, |\, \alpha(t)=1\,\,\mbox{for all}\,\,\alpha\in\Phi_1\}$.
Note that $(Z')^\circ\cdot S=T$, $w'$ acts trivially on $Z'$ and has finitely many fixed points on $S$.
Hence $S'=(Z')^\circ=(T^{w'})^\circ$ and $H=L_2 S'=S'L_2$.
Then $Z'=S'Z(L_1)$ and thus, after replacing $n_{w'}$ by an appropriate element of $n_{w'}Z(L_1)$ if necessary, we may assume that $g\in Z_H(L_1)=Z_{L_2}(L_1)S'$.
Let $h\in L_1$ be such that $s=hn_{w'}h^{-1}\in S$.
Then $hn_wh^{-1}=shgh^{-1}=sg$.
Moreover, $s$ and $g$ commute and $(sg)^m=1$, thus $g$ is semisimple.
We deduce that there exists $h'\in L_2$ such that $h'gh'^{-1}=s'\in S'$.
Thus $n_w$ is $G$-conjugate to $ss'\in T$.
But therefore $n_w$ is conjugate to $h^{-1}ss'h=n_{w'}s'$.
By our assumption of the maximality of $T(0)$, we must have $w=w'$ in the first place.
\end{proof}

This result eliminates the cases $A_3\times \tilde{A_1}$, $C_3\times A_1$ and $B_2\times A_1$.
But it also eliminates the conjugacy class of type $D_4$ by the same argument, since $D_4$ has an alternative admissible diagram of type $B_3\times \tilde{A_1}$.
(This can easily be seen in the Weyl group of type $B_4$, in which an element of type $D_4$ is a product of a negative 3-cycle and a negative 1-cycle.)

To continue, we make a few observations in a similar vein to Lemma \ref{coxpowers}.

\begin{lemma}
There exists a unique conjugacy class of positive rank automorphism of order 8 in type $F_4$.
The corresponding Kac diagram is $11101$.
\end{lemma}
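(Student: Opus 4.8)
The plan is to identify $w=n_wT$ through Carter's classification, establish uniqueness together with the order of the resulting automorphism, check positive rank, and finally pin down the Kac diagram by a dimension count. First I would invoke the reduction already in place: by Lemma \ref{orders} and Lemma \ref{orthog}, if $\theta=\Int n_w$ is of positive rank and order $8$ then $w$ has Carter type a union of irreducible admissible diagrams each of order $8$. Since $G$ has rank $4$ and, by the list preceding the lemma (and Carter's tables \cite{carter}), the only irreducible admissible diagram of order $8$ in $F_4$ is $B_4$, which already has full rank $4$, I conclude that $w$ is of type $B_4$ with no orthogonal complement.

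\textbf{Uniqueness, order and positive rank.} The element $w$ is the Coxeter element of the $B_4$-subsystem $\Psi=\{\pm e_i\}\cup\{\pm e_i\pm e_j\}$, so it acts on ${\mathfrak t}$ with characteristic polynomial $\Phi_8(t)=t^4+1$; in particular ${\mathfrak t}^w=0$, so $w$ is anisotropic and Lemma \ref{fpfree} shows that all representatives $n_w$ are $T$-conjugate with a common eighth power in $T^w$. Since $|\det(w-1)|=\Phi_8(1)=2$ we have $T^w\cong\mu_2$, so $\Int n_w$ has a priori order $8$ or $16$. To rule out the latter I would identify the subsystem subgroup $H=\langle U_\alpha:\alpha\in\Psi\rangle$: a short computation with the weight lattice $X={\mathbb Z}^4+{\mathbb Z}\cdot\tfrac12(e_1+e_2+e_3+e_4)$ gives $Z(H)=\bigcap_{\alpha\in\Psi}\ker\alpha\cong\mu_2$, whence $H\cong\Spin_9$, and Lemma \ref{cox2}(b) with $n=4\equiv 0\bmod 4$ yields $n_w^8=1$. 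Thus $\Int n_w$ has order exactly $8$ and determines a single conjugacy class. Finally $\theta$ has positive rank: the $\zeta$-eigenspace ${\mathfrak t}(1)=\{t\in{\mathfrak t}:w(t)=\zeta t\}$ is one-dimensional (as $\zeta$ is a root of $\Phi_8$) and consists of semisimple elements, so a nonzero such element lies in a Cartan subspace by Lemma \ref{stabtori}(a); the rank is then $1$, since $\varphi(8)=4=\rank G$ forces maximal rank.

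\textbf{Identifying the Kac diagram.} By Lemma \ref{0and1} the labels of $\mu$ lie in $\{0,1\}$, and with the marks $(a_0,\dots,a_4)=(1,2,3,4,2)$ of $\tilde F_4$ the order condition $n_0+2n_1+3n_2+4n_3+2n_4=8$ has exactly the three $\{0,1\}$-solutions $11101$, $01011$ and $10110$, for which $\dim{\mathfrak g}(0)=6,8,8$ respectively (the zero node(s) of $\mu$ determining the type of ${\mathfrak g}(0)$ as $\tilde A_1$, $A_1\times A_1$, $A_1\times\tilde A_1$). To select the correct diagram I would compute $\dim{\mathfrak g}(0)$ for the $B_4$-automorphism directly: $w$ partitions $\Phi$ into six orbits, each of the full size $8$ (one on $\{\pm e_i\}$, three on the long roots, two on the roots $\tfrac12(\pm e_1\pm e_2\pm e_3\pm e_4)$). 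Since $\Ad n_w$ has order $8$ and permutes each such orbit cyclically with period $8$, its eigenvalues on the span of an orbit are the eight distinct eighth roots of unity, one in each degree; together with ${\mathfrak t}^w=0$ this gives $\dim{\mathfrak g}(0)=6$. Hence $\mu=11101$, and uniqueness shows in passing that $01011$ and $10110$ are of zero rank.

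\textbf{Main obstacle.} The delicate point is the order computation, namely that $n_w^8=1$ rather than the nontrivial element of $T^w\cong\mu_2$; this is exactly what the identification $H\cong\Spin_9$ combined with Lemma \ref{cox2}(b) is designed to settle, and without it one cannot even assert that an order-$8$ automorphism of type $B_4$ exists. The orbit count behind $\dim{\mathfrak g}(0)=6$ is the other computational step, but it is clean precisely because every $w$-orbit on $\Phi$ has the full length $8$.
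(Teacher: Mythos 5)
Your proposal is correct and follows essentially the same route as the paper: both identify $B_4$ as the unique order-$8$ class in $W(F_4)$, use Lemma \ref{fpfree} together with Lemma \ref{cox2} applied to the $B_4$-subsystem subgroup $\Spin(9)$ to get $n_w^8=1$ and a single conjugacy class, and then single out $11101$ among the $\{0,1\}$-labelled Kac diagrams of order $8$. The only (minor) divergence is the last step: you distinguish $11101$ from $01011$ and $10110$ by computing $\dim{\mathfrak g}(0)=6$ via the six size-$8$ orbits of $w$ on $\Phi$, whereas the paper conjugates $\Int n_w$ to $\Int t$ with $\alpha_i(t)=\zeta$ on the $B_4$-basis, solves for $\alpha_4(t)\in\{\zeta^{-1},\zeta^{3}\}$, and observes that both cases give a fixed-point subalgebra of type $\tilde{A}_1$ --- two equivalent computations reaching the same conclusion.
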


\begin{proof}
Let $w$ be an element of $W$ of type $B_4$.
We observe that $B_4$ is the unique class in $W$ of order 8 \cite[Tables 3,8]{carter}.
Moreover, any two representatives $n_w$, $n_wt$ of $w$ in $N_G(T)$ are $T$-conjugate (Lemma \ref{fpfree}) and $n_w^4=(n_wt)^4=1$ (Lemma \ref{cox2}), thus there is a unique conjugacy class of positive rank automorphisms of order 8.
It remains to check that the Kac diagram is as indicated in the Lemma.
Consider the $B_4$-type subsystem of $\Phi$ with basis $\{Ê-\hat\alpha=\alpha_0,\alpha_1,\alpha_2,\alpha_3\}$.
Then $\Int n_w$ is conjugate to the automorphism of $G$ which sends $e_{\alpha_i}$ to $\zeta e_{\alpha_i}$ for $i=0,1,2,3$, where $\zeta$ is a primitive 8-th root of unity.
But for such an automorphism either $e_{\alpha_4}\mapsto\zeta^{-1} e_{\alpha_4}$ and thus ${\mathfrak g}(0)={\mathfrak t}\oplus{\mathfrak g}_{\pm(\alpha_3+\alpha_4)}$, or $e_{\alpha_4}\mapsto\zeta^3 e_{\alpha_4}$, in which case ${\mathfrak g}(0)={\mathfrak t}\oplus{\mathfrak g}_{\pm 1221}$.
Since $11101$ is the only Kac diagram of order 8 which has coefficients 1 and 0 and has fixed point subalgebra of type $\tilde{A_1}$, this proves the Lemma.
\end{proof}

\begin{lemma}\label{order6}
There are two conjugacy classes of automorphisms of $F_4$ of order 6 and rank 1.
They have Kac diagrams $01010$ and $11100$ and each element of the first (resp. second) class acts on some maximal torus as a Weyl group element of type $C_3$ (resp. $B_3$).
\end{lemma}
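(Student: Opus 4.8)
The plan is to combine the reduction already carried out in this section with a direct computation of the fixed-point subalgebra $\mathfrak g(0)$, matched against the admissible Kac diagrams. First I would pin down which Weyl classes remain. After Lemma~\ref{coxpowers} has dealt with $F_4(a_1)$ (the square of a Coxeter element) and Lemma~\ref{orthog} has eliminated $C_3\times A_1$ and $D_4$ (the latter via its alternative admissible diagram $B_3\times\tilde A_1$), the only order-$6$ conjugacy classes left from the reduced list are $C_3$ and $B_3$. Each is a Coxeter element of a rank-$3$ subsystem of Coxeter number $6$, so its characteristic polynomial on $\mathfrak t$ is $(t^2-t+1)(t+1)(t-1)$, with eigenvalues $\zeta,\zeta^{-1},-1$ on the subsystem and $1$ on the orthogonal line, where $\zeta$ is a primitive sixth root of unity; by the earlier lemma comparing $c_w(t)$ with its reduction modulo $p$ this is unchanged in our characteristic, since $p\nmid 6$. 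Hence $\mathfrak t(1)=\{x\in\mathfrak t\mid w(x)=\zeta x\}$ is one-dimensional. Being an abelian subspace of semisimple elements of $\mathfrak g(1)$, it lies in a Cartan subspace, so $\theta=\Int n_w$ has positive rank; and since by the choice of $T$ made in this section $\mathfrak c\subseteq\mathfrak t(1)$, we get $\dim\mathfrak c=1$. Thus both classes yield order-$6$ automorphisms of rank exactly $1$.

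Next I would determine the Kac diagrams exactly as in the preceding (order-$8$) lemma and in Lemma~\ref{coxpowers}: realize $\Int n_w$ as $\Int t$ for an explicit $t\in T$ and read off $\mathfrak g(0)=\mathfrak g^t$. For the $C_3$ class I would take the Coxeter element of the subsystem with basis $\{\alpha_2,\alpha_3,\alpha_4\}$ (one long, two short roots), and for the $B_3$ class the subsystem $\{\alpha_1,\alpha_2,\alpha_3\}$ (two long, one short). Since $\theta$ restricts to the rank-$3$ subgroup $L_1$ as a Coxeter automorphism (Lemma~\ref{cox}), the intersection $\mathfrak g(0)\cap\mathfrak l_1$ is just the torus, so the roots of $\mathfrak g(0)$ come from outside $\Phi_1$; computing them should give a reductive group of rank $4$ whose semisimple part is of type $A_1\times A_1\times\tilde A_1$ in the $C_3$ case and of type $\tilde A_2$ in the $B_3$ case. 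An inspection of the order-$6$ Kac diagrams with coefficients in $\{0,1\}$ — which is all that can occur by Lemma~\ref{0and1} — shows that these two types of $\mathfrak g(0)$ occur for the unique diagrams $01010$ and $11100$ respectively. Finally, since $F_4$ has no outer automorphisms and $\tilde F_4$ admits no nontrivial diagram symmetry, distinct Kac diagrams label distinct $\Int G$-classes, so $01010\neq 11100$ gives exactly two classes, with the stated Weyl types.

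I expect the main obstacle to be the explicit identification of $\mathfrak g(0)$, and in particular the resolution of the scalar (square-root) ambiguity in the value of the relevant character on the torus direction complementary to the chosen subsystem — precisely the ``two cases'' phenomenon encountered in the order-$8$ lemma. One must check that exactly one choice yields an automorphism of order $6$ of the required rank, and must verify the root-length data for the generators of $\mathfrak g(0)$ (the tilde/untilde distinction), since it is this data that separates $01010$ from $11100$ and hence distinguishes the $C_3$ class from the $B_3$ class.
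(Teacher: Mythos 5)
Your reduction to the classes $C_3$ and $B_3$ and the rank computation via the characteristic polynomial $(t^2-t+1)(t+1)(t-1)$ are fine and agree with the paper; your identifications of ${\mathfrak g}(0)$ for the diagrams $01010$ (type $A_1\times A_1\times\tilde{A}_1$ plus centre) and $11100$ (type $\tilde{A}_2$) are also correct. But there is a genuine gap in the count. By Lemma \ref{0and1} there are \emph{four} order-$6$ Kac diagrams with coefficients in $\{0,1\}$: $10101$, $01010$, $11100$ and $00011$, and your argument never disposes of $00011$. Matching your two computed fixed-point algebras to two of the diagrams shows that \emph{at least} two rank-one classes exist; it does not show there are \emph{exactly} two, since a priori $00011$ could itself be a rank-one class acting as $C_3$ or $B_3$ on its own adapted torus — nothing in your proposal contradicts this. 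The paper closes exactly this hole with a separate direct computation: for $00011$ one has $\dim G(0)=16$, $\dim{\mathfrak g}(1)=5$, and $e=e_{\alpha_3}+e_{\alpha_4}$ satisfies $\dim Z_{G(0)}(e)\leq 11$, so $G(0)\cdot e$ is dense in ${\mathfrak g}(1)$ and the automorphism has rank zero. Some such zero-rank verification is indispensable for the word ``two'' in the statement.

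A second, related problem is that you underestimate the representative ambiguity by modelling your diagram identification on the order-$8$ lemma. There $w$ of type $B_4$ has finitely many fixed points on $T$, so Lemma \ref{fpfree} makes all representatives $T$-conjugate and the residual ambiguity is finite and harmless. For $w$ of type $C_3$ or $B_3$, however, $(T^w)^\circ$ is a one-dimensional torus, Lemma \ref{fpfree} does not apply, and the representatives $n_w s$, $s\in (T^w)^\circ$, form a continuous family of generally non-conjugate automorphisms; even arranging order $6$ needs a correction by $s$, since by Lemma \ref{cox2}(c) a Coxeter representative inside the $\Sp(6)$-Levi has sixth power $-I$, which is non-central in $F_4$. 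Worse, your proposed disambiguator — ``exactly one choice yields an automorphism of order $6$ of the required rank'' — cannot work as stated: \emph{every} order-$6$ choice has ${\mathfrak t}(1)\neq 0$ and hence positive rank, and nothing rules out that some choices land in the rank-two class $10101$, so separating rank one from rank two among the finitely many candidates (and then computing ${\mathfrak g}(0)$ for the right one) is essentially as hard as the original problem. The paper sidesteps all of this by arguing in the opposite direction: starting from the Kac automorphisms $\theta=\Int t$ with diagrams $01010$ and $11100$, it exhibits the subsystems $\Sigma$ generated by $\{\alpha_3+\alpha_4,\alpha_2+\alpha_3,\alpha_1\}$ (type $C_3$), resp. $\{\alpha_1,\alpha_2,\alpha_3+\alpha_4\}$ (type $B_3$), on whose simple roots $d\theta(e_\alpha)=\zeta e_\alpha$, so $\theta$ is visibly a Coxeter automorphism of the corresponding Levi subgroup and $t$ is conjugate within that Levi to a representative $n_w$ of an element of the stated type — no computation of ${\mathfrak g}(0)$ and no choice among representatives is ever required. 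You should reorganize your argument in this direction, or else supply both the classification of all order-$6$ representatives over a fixed $w$ and the zero-rank computation for $00011$.
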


\begin{proof}
We note first of all that there are 4 classes of automorphisms of order 6 with Kac coefficients 0 and 1: the Kac diagrams $10101$, $11100$, $01010$ and $00011$.
In addition, we have seen above that $10101$ is the unique class of automorphism of order 6 and rank two.
Thus any automorphism of rank 6 and order 1 is conjugate to $\Int n_w$ where $w=n_w T\in W$ is either of type $C_3$ or of type $B_3$.

Let $\theta=\Int t$ be a Kac automorphism of type $01010$ and consider the root subsystem $\Sigma$ of $\Phi$ generated by $\alpha_3+\alpha_4$, $\alpha_2+\alpha_3$ and $\alpha_1$.
Then $\Sigma$ is of type $C_3$ and if we let $H$ be the (Levi) subgroup of $G$ generated by $T$ and the subgroups $U_{\pm\alpha}$ with $\alpha\in\Sigma$ then $\theta$ stabilizes $H$ and is easily seen to be a Coxeter automorphism of $H'$ (since $d\theta(e_\alpha)=\zeta e_\alpha$ for $\alpha=\alpha_3+\alpha_4,\alpha_2+\alpha_3,\alpha_1$).
Thus $t$ is $H$-conjugate to some $n_w\in N_G(T)$, where $w=n_w T$ is of type $C_3$.

Let $\theta=\Int t$ be the Kac automorphism for the Kac diagram $11100$ and consider the root subsystem $\Sigma$ of $\Phi$ generated by $\alpha_1$, $\alpha_2$ and $\alpha_3+\alpha_4$.
Then $\Sigma$ is of type $B_3$ and $d\theta(e_\alpha)=\zeta e_\alpha$ for $\alpha=\alpha_1,\alpha_2,\alpha_3+\alpha_4$.
Thus if we let $H$ be the Levi subgroup of $G$ generated by $T$ and all $\alpha$ with $\alpha\in\Sigma$ then $\theta|_{H'}$ is a Coxeter automorphism.
In particular, $t$ is $H$-conjugate to some $n_w\in N_G(T)$, where $w=n_w T$ is of type $B_3$.

It remains to show that a Kac automorphism of type $00011$ is of zero rank.
Here $G(0)$ is the pseudo-Levi subgroup generated by $T$ and $U_\alpha$ with $\alpha=\pm\alpha_2,\pm\alpha_1,\pm\hat\alpha$.
In particular, $\dim G(0)=16$.
We have $\dim{\mathfrak g}(1)=5$; thus it will suffice to show that there exists $e\in{\mathfrak g}(1)$ such that $\dim Z_{G(0)}(e)=11$.
Let $e=e_{\alpha_3}+e_{\alpha_4}$.
Then ${\mathfrak z}_{{\mathfrak g}(0)}(e)$ is spanned by ${\mathfrak g}_\alpha$ with $\alpha=\pm\hat\alpha,\pm\alpha_1,\pm 1342,-\alpha_2,-(\alpha_1+\alpha_2),1242$ and a two-dimensional subalgebra of ${\mathfrak t}$.
In particular, $\dim Z_{G(0)}(e)\leq 11$, whence $\dim G(0)\cdot e\geq 5$.
It follows that $G(0)\cdot e$ is dense in ${\mathfrak g}(1)$ and therefore $\theta$ is of zero rank.
\end{proof}

\begin{corollary}\label{order3}
There are two conjugacy classes of automorphisms of $F_4$ of order 3 and rank 1.
They have Kac diagrams $11000$ and $10001$ and each element of the first (resp. second) class acts on some maximal torus of $G$ as a Weyl group element of type $A_2$ (resp. $\tilde{A}_2$).
\end{corollary}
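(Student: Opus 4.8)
The plan is to imitate the proof of Lemma~\ref{order6}: I would realize each of the two automorphisms as a Coxeter automorphism of a suitable rank-two subsystem, which simultaneously proves that it has positive rank $1$ and identifies the Carter type of the corresponding Weyl group element.

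First I would assemble what is already available. By Lemma~\ref{0and1} any positive rank order-$3$ automorphism has a Kac diagram with coefficients in $\{0,1\}$, and a direct inspection of the extended diagram $\tilde{F}_4$ (with marks $(a_0,\ldots,a_4)=(1,2,3,4,2)$, so that the order of $\mu$ is $n_0+2n_1+3n_2+4n_3+2n_4$) shows that the only such diagrams of order $3$ are $00100$, $11000$ and $10001$. By Lemma~\ref{coxpowers}(b) the diagram $00100$ already accounts for the rank-$2$ automorphism of Carter type $A_2\times\tilde{A}_2$. On the Weyl group side, Carter's tables list exactly three order-$3$ classes in $W(F_4)$, namely $A_2$, $\tilde{A}_2$ and $A_2\times\tilde{A}_2$, and by Lemma~\ref{orders} a positive rank inner automorphism must have an irreducible factor of order $3$. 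Hence the remaining diagrams $11000$ and $10001$ can only correspond to the classes $A_2$ and $\tilde{A}_2$, and it remains to confirm that both have positive rank and to match them correctly.

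Next I would treat $11000$. Writing $\theta=\Int t$ with $\alpha_1(t)=\zeta$ and $\alpha_2(t)=\alpha_3(t)=\alpha_4(t)=1$, a root $\beta$ satisfies $d\theta(e_\beta)=\zeta e_\beta$ precisely when its $\alpha_1$-coefficient is congruent to $1$ modulo $3$. The key step is to exhibit two \emph{long} roots with this property spanning a subsystem $\Sigma$ of type $A_2$: in the standard coordinates one may take $\beta_1=\alpha_1=e_2-e_3$ and $\beta_2=e_1+e_3=\alpha_1+3\alpha_2+4\alpha_3+2\alpha_4$, which have inner product $-1$ and sum $\hat\alpha=e_1+e_2$. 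Letting $H$ be the subgroup generated by $T$ and the $U_{\pm\alpha}$ with $\alpha\in\Sigma$, the relations $d\theta(e_{\pm\beta_i})=\zeta^{\pm1}e_{\pm\beta_i}$ show (as in Lemma~\ref{order6}, via Lemma~\ref{cox}) that $\theta|_{H'}$ is a Coxeter automorphism of a group of type $A_2$, so $t$ is $H$-conjugate to a representative $n_w$ of the Coxeter element $w$ of $\Sigma$, which has Carter type $A_2$. For $10001$ one argues identically with $\alpha_4(t)=\zeta$ and $\alpha_1(t)=\alpha_2(t)=\alpha_3(t)=1$: here one chooses the two \emph{short} roots $\beta_1=\alpha_4=\tfrac12(e_1-e_2-e_3-e_4)$ and $\beta_2=\alpha_1+2\alpha_2+3\alpha_3+\alpha_4=\tfrac12(e_1+e_2+e_3+e_4)$, which have inner product $-\tfrac12$ and sum $e_1$, so that $\theta$ restricts to a Coxeter automorphism of a short $A_2$ and $w$ is of Carter type $\tilde{A}_2$.

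Finally, in each case the rank is read off as in Lemma~\ref{w0prep}: the $\zeta$-eigenspace of $w$ on $\mathfrak{t}$ coincides with that of the Coxeter element on the two-dimensional Cartan subspace of $\Sigma$, which is one-dimensional since the exponents of $A_2$ are $1$ and $2$. Thus both automorphisms have rank $1$; they are visibly non-conjugate, and together with the rank-$2$ diagram $00100$ they exhaust the order-$3$ classes, which yields the corollary. I expect the only genuine care to be the root-length bookkeeping in the two explicit subsystems — selecting long roots for $11000$ and short roots for $10001$ — since it is exactly this that produces the distinction between the Carter types $A_2$ and $\tilde{A}_2$.
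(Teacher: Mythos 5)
Your proposal is correct, but it takes a genuinely different route from the paper. The paper proves this statement as a true corollary of Lemma~\ref{order6}: it squares the two order-$6$ Kac automorphisms, computes ${\mathfrak g}(0)$ of each square (for $01010$ one gets $\mathfrak{so}(7,k)\oplus k$ with simple roots $\{\alpha_2,1120,\alpha_4\}$, hence diagram $10001$; for $11100$ one gets $\mathfrak{sp}(6)\oplus k$ with simple roots $\{\alpha_4,\alpha_3,1220\}$, hence diagram $11000$), and then uses that the square of a $C_3$-element (resp.\ $B_3$-element) of $W(F_4)$ is of type $\tilde{A}_2$ (resp.\ $A_2$), positivity of rank being inherited from the order-$6$ automorphisms. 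You instead transplant the \emph{method} of Lemma~\ref{order6} to the order-$3$ diagrams directly: your root bookkeeping checks out ($\beta_2=e_1+e_3=1342$ with $\alpha_1$-coefficient $1$ giving a long $A_2$ with sum $\hat\alpha$ for $11000$, and $\beta_2=1231=\tfrac12(e_1+e_2+e_3+e_4)$ with $\alpha_4$-coefficient $1$ giving a short $A_2$ with sum $e_1$ for $10001$), and the plane spanned by each pair contains no further roots of $\Phi$, so $\Sigma$ really is of type $A_2$ resp.\ $\tilde{A}_2$ and the Coxeter-automorphism recognition via Lemma~\ref{cox} goes through exactly as in Lemma~\ref{order6}. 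What each approach buys: the paper's squaring argument is shorter given Lemma~\ref{order6} and makes the dependence on the order-$6$ classification explicit, while yours is self-contained, produces the explicit eigensubsystems, and makes the long-versus-short distinction (hence $A_2$ versus $\tilde{A}_2$) visible at the level of roots rather than via the identity $(\text{Coxeter of }C_3)^2\in\tilde{A}_2$. One small point of care: your eigenspace computation on ${\mathfrak t}$ by itself only bounds the rank from \emph{below} (the $\zeta$-eigenline consists of semisimple elements of ${\mathfrak g}(1)$), since a priori a Cartan subspace need not lie in $\Lie(T)$; the upper bound rank $\leq 1$ comes, as you indicate at the end, from the uniqueness of the rank-$2$ order-$3$ class $00100$ (Lemma~\ref{coxpowers}(b) together with Lemma~\ref{fpfree}) and the fact that distinct Kac diagrams of $\tilde{F}_4$ give non-conjugate automorphisms -- which is also how the paper itself concludes ``$\theta$ must therefore have rank $1$,'' so your elision matches the paper's.
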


\begin{proof}
Apart from the (unique) rank 2 automorphism $00100$, the two Kac automorphisms given in the corollary are the only remaining Kac automorphisms of order 3.
Let $\theta$ be the square of the Kac automorphism with diagram $01010$: then ${\mathfrak g}(0)$ is isomorphic to $\mathfrak{so}(7,k)\oplus k$, with basis of simple roots $\{ \alpha_2,1120,\alpha_4\}$.
Thus $\theta$ is conjugate to a Kac automorphism with diagram $10001$.
The statement in this case now follows since $\theta$ must therefore have rank 1, and the square of an element of type $C_3$ is an element of type $\tilde{A_2}$.
Now let $\theta$ be the square of the Kac automorphism with diagram $11100$.
Then ${\mathfrak g}(0)$ is isomorphic to $\mathfrak{sp}(6)\oplus k$, with basis of simple roots $\{Ê\alpha_4,\alpha_3,1220\}$.
Thus, by the same reasoning, the Kac automorphism with diagram $11000$ has rank 1 and corresponds to a Weyl group element of type $A_2$.
\end{proof}

\begin{lemma}
There is one conjugacy class of automorphisms in type $F_4$ of order 4 and rank 1.
The corresponding Kac diagram is $01001$ and is represented by an element of $W$ of type $B_2$.
\end{lemma}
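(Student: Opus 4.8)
The plan is to enumerate the admissible order-$4$ Kac diagrams and match them against the Weyl types surviving the earlier reductions, exactly as in Lemma~\ref{order6}. By Lemma~\ref{0and1} a positive rank automorphism of order $4$ has all Kac coefficients in $\{0,1\}$; using the marks $(1,2,3,4,2)$ of the nodes $\beta_0,\ldots,\beta_4$ of $\tilde F_4$, the diagrams of order $4$ with coefficients in $\{0,1\}$ are precisely $00010$, $10100$ and $01001$. The diagram $10100$ is the rank-$2$ class $D_4(a_1)$ of Lemma~\ref{coxpowers}, so the order-$4$ rank-$1$ classes are among $00010$ and $01001$. On the Weyl side, Lemmas~\ref{orders} and~\ref{orthog} leave only the irreducible order-$4$ diagrams $A_3$ and $B_2$ (with $D_4(a_1)$ already disposed of), so I must identify which diagram realises $B_2$ and show the remaining one is not of rank $1$.

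First I would realise $B_2$ as the diagram $01001$ by the Coxeter-of-Levi device used in Lemma~\ref{order6}. Writing $\theta=\Int t$ for the Kac automorphism of type $01001$, one has $d\theta(e_\beta)=\zeta^{\,c_1(\beta)+c_4(\beta)}e_\beta$, where $c_i(\beta)$ denotes the coefficient of $\alpha_i$ in $\beta$. Take $\beta_1=\alpha_1+\alpha_2$ and $\beta_2=\alpha_3+\alpha_4$; a short check of Cartan integers shows that the subsystem $\Sigma$ they generate is of type $B_2$ (with $\beta_1$ long and $\beta_2$ short), and both satisfy $c_1+c_4=1$, so $d\theta(e_{\beta_i})=\zeta e_{\beta_i}$. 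Hence $\theta$ stabilises the Levi subgroup $H$ generated by $T$ and the $U_{\pm\alpha}$, $\alpha\in\Sigma$, and restricts to a Coxeter automorphism of $H'$; therefore $\theta$ is $H$-conjugate to $\Int n_w$ with $w$ of type $B_2$. Since $d\theta$ acts on ${\mathfrak t}$ as $w$, the rank of $\theta$ is the multiplicity of $\zeta$ as an eigenvalue of $w$, namely $1$.

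It remains to show that $00010$ is not of rank $1$, and this is the step I expect to be the main obstacle. The direct route, modelled on the treatment of $00011$ in Lemma~\ref{order6}, is to compute the $00010$-grading: here ${\mathfrak g}(0)$ is of type $A_3\times\tilde A_1$ with $\dim{\mathfrak g}(0)=18$, while a count of roots by their $\alpha_3$-coefficient gives $\dim{\mathfrak g}(1)=8$; it then suffices to exhibit $e\in{\mathfrak g}(1)$ with $\dim Z_{G(0)}(e)=10$, so that $G(0)\cdot e$ is dense in ${\mathfrak g}(1)$ and $\theta$ is of zero rank. A slicker alternative, parallel to the elimination of the $D_4$ class after Lemma~\ref{orthog}, would be to observe that an element of Carter type $A_3$ has characteristic polynomial $t^4-1$, shared with $B_2\times\tilde A_1$, and to identify the two as a single class of $W(F_4)$ possessing an alternative admissible diagram with an orthogonal component $\tilde A_1$ of order $2<4$; Lemma~\ref{orthog} would then exclude type $A_3$ outright. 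The delicate point is exactly this $W(F_4)$-identification (for which the explicit density computation is a safe substitute). Granting it, $00010$ has no positive-rank Weyl type and is of zero rank, so the unique order-$4$ rank-$1$ class is $01001$, of type $B_2$, as claimed.
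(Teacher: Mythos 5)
Your main route is the paper's route: by Lemma~\ref{0and1} you enumerate the same three order-4 diagrams $00010$, $10100$, $01001$, dispose of $10100$ as the unique rank-2 class $D_4(a_1)$ via Lemma~\ref{coxpowers}, identify $01001$ with type $B_2$ through the subsystem generated by $1100$ and $0011$, and attack $00010$ by a dense-orbit dimension count. In fact the paper's positive-rank witness $e_{1100}+e_{0011}+f_{1122}$ is precisely the Coxeter-type element of your $B_2$ Levi, so your Coxeter-of-Levi argument (modelled on Lemma~\ref{order6}) is the same device; it is fine, with one local slip: ``the rank of $\theta$ is the multiplicity of $\zeta$ as an eigenvalue of $w$'' is false in general, since a Cartan subspace need not lie in ${\mathfrak t}$ (take $w=1$ and $\theta=\Int t$ of positive rank, e.g.\ $00100$). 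What is true, and what you implicitly use, is: rank $\geq 1$ because the $\zeta$-eigenspace of $d\theta$ on ${\mathfrak t}$ consists of commuting semisimple elements of ${\mathfrak g}(1)$, and rank $\leq 1$ because the unique rank-2 class of order 4 is $10100$, a Kac class distinct from $01001$.

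The genuine gap is in the $00010$ step. Your ``safe'' route states the correct criterion and the correct data (${\mathfrak g}(0)\cong{\mathfrak{sl}}(4,k)\oplus{\mathfrak{sl}}(2,k)$ of dimension 18, $\dim{\mathfrak g}(1)=8$) but never produces the element $e$: exhibiting $e$ with $\dim Z_{G(0)}(e)\leq 10$ is the actual content of the paper's proof, which takes $e=e_{0010}+f_{1231}$ and lists a ten-dimensional spanning set of ${\mathfrak z}_{{\mathfrak g}(0)}(e)$. The omission is routine to repair — the paper observes that ${\mathfrak g}(1)$ is the tensor product of the natural modules, i.e.\ $k^4\otimes k^2$, on which any element of rank 2 (as a $4\times 2$ matrix) visibly has dense $\mathrm{SL}_4\times\mathrm{SL}_2$-orbit — but as written the decisive verification is asserted rather than carried out. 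More seriously, your ``slicker alternative'' is logically insufficient even if one grants the delicate $W(F_4)$-identification of the class $A_3$ with a diagram having an orthogonal $\tilde A_1$ component (a coincidence the paper records for $D_4=B_3\times\tilde A_1$ but pointedly does not invoke for $A_3$). Eliminating the Carter class $A_3$ would show that no positive rank automorphism acts on $T$ as an element of type $A_3$; it would \emph{not} show that the Kac class $00010$ is of zero rank. The assignment $\theta\mapsto w$ need not be injective on Kac classes when $w$ has a positive-dimensional fixed subtorus: for $w$ of type $B_2$ one has $\dim(T^w)^\circ=2$, so the coset $n_wT$ contains representatives giving a priori non-conjugate order-4 automorphisms (compare Lemma~\ref{fpfree}, whose hypothesis fails here), and $00010$ could a priori be one of them — excluding this is exactly the uniqueness the lemma asserts. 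So a direct argument for $00010$ (the paper's computation, or the dense orbit on $k^4\otimes k^2$) is necessary, not merely a ``safe substitute''.
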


\begin{proof}
By Lemma \ref{0and1} the Kac diagram corresponding to a positive rank automorphism can only have 0s and 1s on the nodes.
Moreover, we have already seen in Lemma \ref{coxpowers}(b) that the Kac automorphism with diagram $10100$ is the unique class of automorphisms of order 4 and rank 2.
Hence there are only two other possibilities: $01001$ and $00010$.
It is easy to see that the Kac automorphism with diagram $01001$ is of positive rank since $e_{1100}+e_{0011}+f_{1122}$ is semisimple.
Hence it is of rank 1.
(By considering the root subsystem generated by $1100$ and $0011$, we can see that $\theta$ is conjugate to an automorphism of the form $\Int n_w$, where $w=n_wT$ is an element of $W$ of type $B_2$.)
It remains to prove that the Kac automorphism with diagram $00010$ is of zero rank (first proved by Vinberg \cite[\S 9]{vin}).
In this case ${\mathfrak g}(0)\cong \mathfrak{sl}(4,k)\oplus\mathfrak{sl}(2,k)$ and ${\mathfrak g}(1)$, as a ${\mathfrak g}(0)$-module, is isomorphic to the tensor product of the natural representation for $\mathfrak{sl}(4)$ and the natural representation for $\mathfrak{sl}(2)$.
In particular, $\dim{\mathfrak g}(1)=8$.
If we let $e=e_{0010}+f_{1231}$ then it is easy to check that ${\mathfrak z}_{{\mathfrak g}(0)}(e)$ is spanned by $h_{\alpha_1}$, $e_{\alpha_1}$, $f_{\alpha_1}$, $h_{\alpha_2}+h_{\alpha_3}+h_{\alpha_4}$, $f_{\hat\alpha}$, $f_{1342}$, $f_{1242}+f_{\alpha_4}$, $e_{1242}+e_{\alpha_4}$, $f_{1100}$ and $f_{\alpha_2}$.
Thus $\dim Z_{G(0)}(e)\leq 10$ and therefore $\dim G(0)\cdot e\geq 8$.
Hence $G(0)\cdot e$ is dense in ${\mathfrak g}(1)$, and so ${\mathfrak g}(1)$ is of zero rank.
\end{proof}

This completes the determination of the positive rank automorphisms in type $F_4$.
We will use these results on automorphisms in type $F_4$ to study the positive rank triality automorphisms in type $D_4$.
In \cite{vin} all automorphisms of $\mathfrak{so}(2n,{\mathbb C})$ of the form $x\mapsto gxg^{-1}$, $g\in{\rm O}(2n,k)$ were studied; this was generalized to (odd) positive characteristic in \cite{thetagroups}, and it was proved that all such $\theta$-groups have a KW-section.
If $n\geq 5$ then all automorphisms have this form; in type $D_4$ this leaves only the automorphisms of type $D_4^{(3)}$.
Recall that the set of long roots in a root system of type $F_4$ is a root system of type $D_4$, and correspondingly $W(F_4)$ contains $W(D_4)$ as a normal subgroup of index 6.
Let $\hat{G}$ be a simple algebraic group of type $F_4$, let $T$ be a maximal torus of $\hat{G}$, let $\Phi=\Phi(\hat{G},T)$ and let $\Phi_l$ be the set of long roots in $\Phi$.
If $\{ \alpha_1,\alpha_2,\alpha_3,\alpha_4\}$ is a basis of $\Phi$ then $\{ \alpha_2,\alpha_1,\alpha_2+2\alpha_3,\alpha_2+2\alpha_3+2\alpha_4\}$ is a basis of $\Phi_l$.
Note that $(\alpha_2+2\alpha_3)^\vee=\alpha_2^\vee+\alpha_3^\vee$ and $(\alpha_2+2\alpha_3+2\alpha_4)^\vee=\alpha_2^\vee+\alpha_3^\vee+\alpha_4^\vee$, thus the subgroup of $\hat{G}$ generated by $T$ and all $U_\alpha$ with $\alpha\in\Phi_l$ is isomorphic to $\Spin(8,k)=G$.
The subgroup $H$ of $\hat{G}$ generated by $G$ and $N_{\hat{G}}(T)$ normalizes $G$ and the corresponding map $H\rightarrow\Aut G$ is surjective.
(In fact $H$ is isomorphic to the semidirect product of $G$ by the symmetric group $S_3$.)

\begin{lemma}
All Kac automorphisms of type $D_4^{(3)}$ which have Kac coefficients equal to 0 or 1 are of positive rank, with the exception of the Kac automorphism of order 9.
These diagrams are:

(a) $111$, which has rank 1 and order 12 and acts as a Coxeter element of $F_4$,

(b) $101$, which has rank 2 and order 6 and corresponds to an element of type $F_4(a_1)$,

(c) $001$, which has rank 2 and order 3 and corresponds to an element of type $A_2\times\tilde{A_2}$,

(d) $010$, which has rank 1 and order 6 and corresponds to an element of type $C_3$,

(e) $100$, which has rank 1 and order 3 and corresponds to an element of type $\tilde{A_2}$ (this is the automorphism $\gamma$ constructed in \S 2).
\end{lemma}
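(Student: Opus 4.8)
The plan is to exploit the realization, set up just before the statement, of the triality automorphisms of $G=\Spin(8)$ as \emph{inner} automorphisms of the ambient group $\hat{G}$ of type $F_4$. Given a Kac automorphism $\theta$ of type $D_4^{(3)}$, I would choose (following the set-up of \S4) a $\theta$-stable maximal torus $T$ with $\mathfrak c=\mathfrak t(1)$ a Cartan subspace, and write $\theta=\Int n_w$ with $w=n_wT\in N_{\Aut G}(T)/T$. Since $H=G\rtimes S_3\hookrightarrow\hat{G}$ identifies $N_{\Aut G}(T)/T$ with $W(D_4)\rtimes S_3=W(F_4)$, the element $w$ becomes an element of $W(F_4)$, and because $\theta$ is of type $D_4^{(3)}$ its image in $S_3=W(F_4)/W(D_4)$ is a $3$-cycle. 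I would then use two facts repeatedly: (i) when $\theta$ has positive rank, its order equals the order of $w$, computed from the Kac datum as $m=3(n_0+2n_1+n_2)$; and (ii) the rank of $\theta$ equals $\dim\mathfrak t(1)$ (as $\mathfrak c=\mathfrak t(1)$ by maximality), that is, the multiplicity of a primitive $m$-th root of unity $\zeta$ as an eigenvalue of $w$ on the $4$-dimensional space $\mathfrak t$, which by the lemma identifying the minimal polynomial of $w$ with the reduction of $c_w(t)$ is read off Carter's tables. Everything thus reduces to naming the Carter class of $w$ in each case.

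First I would dispose of order $9$. For the diagrams $110$ and $011$ one has $m=9$, and since $\dim T_1=\dim\mathfrak c\cdot\varphi(m)$ by Lemma \ref{stabtori}(b) while $\varphi(9)=6>4=\rank G$, we are forced to have $\dim\mathfrak c=0$; equivalently, a primitive $9$-th root of unity cannot be an eigenvalue of $w$ on a $4$-dimensional rational representation, since its $\varphi(9)=6$ Galois conjugates would all have to occur. These are exactly the excepted zero-rank automorphisms.

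Next I would treat the ``Coxeter tower'' $111,101,001$. Mimicking Lemma \ref{cox}, the element $c=E_{\beta_0}+E_{\beta_1}+E_{\beta_2}\in\mathfrak g(1)$ for the all-ones diagram $111$ involves the affine (negative-root) node $\beta_0$ and is by the same mechanism non-nilpotent, so $\theta_{111}$ has positive rank; its $w$ then has order $12$, which by Carter's tables is the unique class of that order, the Coxeter class $F_4$, of $\zeta_{12}$-multiplicity $1$. Raising to powers, $\theta_{111}^2$ and $\theta_{111}^4$ are again of type $D_4^{(3)}$ (they project to $\gamma^2,\gamma$) and act as $w^2,w^4$, of Carter types $F_4(a_1)$ and $A_2\times\tilde A_2$ and of ranks $2$ and $2$; once the two rank-$1$ classes below are identified, comparison of orders and ranks shows these powers are precisely $101$ and $001$.

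Finally I would handle the rank-$1$ classes $100$ and $010$ directly. The diagram $100$ is the automorphism $\gamma$, with $G(0)$ of type $G_2$ acting on the $7$-dimensional module $\mathfrak g(1)$; since $k[\mathfrak g(1)]^{G_2}=k[q]$ is a polynomial ring in one variable, $\theta_{100}$ has rank $1$ and its $w$ is an order-$3$ triality element. For $010$ I would exhibit a $\theta$-stable Levi subgroup of type $C_3$ on which $\theta$ restricts to a Coxeter automorphism, exactly as in Lemma \ref{order6}, so that $w$ is a $C_3$-Coxeter element, of rank $1$. Here lies the main obstacle: distinguishing the \emph{short} classes $\tilde A_2$ and $C_3$ from their long-root namesakes $A_2$ and $B_3$. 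These pairs have identical characteristic polynomials, so eigenvalue data alone cannot separate them, and the decisive input is the triality condition. Since an $A_2$ generated by long roots lies in $W(D_4)$ it projects trivially to $S_3$, and since $B_3^2$ is of type $A_2$ (hence in $W(D_4)$) the class $B_3$ projects to a transposition; as neither is a $3$-cycle, the triality classes of orders $3$ and $6$ and rank $1$ must be $\tilde A_2$ and $C_3$. Making this rigorous requires the explicit embedding $W(D_4)\rtimes S_3\hookrightarrow W(F_4)$ and the long-root basis $\{\alpha_2,\alpha_1,\alpha_2+2\alpha_3,\alpha_2+2\alpha_3+2\alpha_4\}$ of $\Phi_l$, in order to verify that the Levi produced genuinely has type $C_3$ and that $\gamma$ acts through short roots.
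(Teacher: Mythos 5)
Your proposal follows the paper's own strategy in its essentials: embed $G=\Spin(8,k)$ as the long-root subgroup of $\hat{G}$ of type $F_4$, reduce via Lemma \ref{0and1} to the $0/1$ diagrams, exclude order $9$, pin down $111$ via the Coxeter class of $W(F_4)$, and obtain the two rank-$1$ classes from the $F_4$ results (Lemma \ref{order6}, Corollary \ref{order3}). The local differences are mostly to your advantage. Your exclusion of order $9$ via $\varphi(9)=6>4=\rank G$ and Lemma \ref{stabtori}(b) replaces the paper's observation that $W(F_4)$ has no element of order $9$; these are equivalent. More substantially, you pin the rank-$1$ diagrams $100$ and $010$ first and then place $\theta^2,\theta^4$ at $101,001$ by elimination using their ranks $2,2$ (forced by $\varphi(6)=\varphi(3)=2$ and the $\zeta$-eigenvalue multiplicities of $w^2,w^4$), whereas the paper computes the fixed-point subalgebras of $\theta^2$ and $\theta^4$ explicitly. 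Your route is arguably more robust here: the printed computation asserts the fixed space of $\theta^4$ has dimension $6$ and diagram $100$, while the spanning set it lists is $8$-dimensional and the lemma's own statement (and Table 3) force $001$ — a slip your elimination sidesteps entirely. Likewise your identification of $100$ via $k[{\mathfrak g}(1)]^{G(0)}\cong k[q]$ for $G(0)$ of type $G_2$ is a clean alternative to the paper's restriction of the $\tilde{A}_2$-automorphism from $\hat{G}$.

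Two steps are glossed in a way that does not follow as literally stated. First, Lemma \ref{cox} is proved only for inner automorphisms, so non-nilpotence of $c=E_{\beta_0}+E_{\beta_1}+E_{\beta_2}$ is not available ``by the same mechanism'': $E_{\beta_0}$ is not the lowest-root vector of ${\mathfrak g}$ but a height $-3$ vector (a $\gamma$-eigenvector with eigenvalue $\sigma\neq 1$ built from $e_{-(\alpha_1+\alpha_2+\alpha_3)}$ and its $\gamma$-conjugates), and the height $-3$ component of a complement ${\mathfrak m}$ is only $2$-dimensional inside the $3$-dimensional height $-3$ space. To repair this you must choose ${\mathfrak m}$ $\gamma$-equivariantly (say ${\mathfrak m}={\mathfrak z}_{\mathfrak g}(f)$ for a $\gamma$-fixed $f$ completing $e=E_{\beta_1}+E_{\beta_2}$ to an ${\mathfrak s}{\mathfrak l}_2$-triple) and observe that $[e,{\mathfrak g}_{-4}]$ is the $\gamma$-invariant line in ${\mathfrak g}_{-3}$, so the eigenvalue-$\sigma$ vector $E_{\beta_0}$ does lie in ${\mathfrak m}$. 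This is fixable in a few lines, but it is cheaper to argue as the paper does: $111$ is the unique $0/1$ diagram of order $12$, a Coxeter representative $n_w\in N_{\hat{G}}(T)$ satisfies $n_w^{12}=1$ (Lemma \ref{cox2}), and $\Int n_w|_G$ has ${\mathfrak t}(1)\neq 0$, hence positive rank — no nilpotency analysis needed. Second, $\Spin(8,k)$ has no Levi subgroup of type $C_3$ (all $D_4$ roots have the same length), so your step for $010$ necessarily takes place inside $\hat{G}$: what you are really doing is the paper's move — realizing the automorphism as $\Int n_w|_G$ with $w$ of type $C_3$ via Lemma \ref{order6} applied to $\hat{G}$ and then restricting — which you partly acknowledge; just make sure the upper bound rank $\leq 1$ for the restriction is imported from the rank-$1$ statement in $\hat{G}$ (a commutative subspace of semisimple elements of ${\mathfrak g}(1)$ is one of $\hat{\mathfrak g}(1)$), since the $C_3$-Levi itself does not meet $G$ in a subgroup of type $C_3$.
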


\begin{proof}
Let $\hat{G}$, $G$ and $T$ be as above.
By assumption on $T$, any automorphism of $G$ is of the form $\Int n_w|_G$ for some $n_w\in N_{\hat{G}}(T)$.
Recall by Lemma \ref{0and1} that a Kac diagram which corresponds to a positive rank automorphism satisfies $h(\beta_i)\in\{ 0,1\}$ for $i=0,1,2$.
Moreover, there are no elements of $W(F_4)$ of order 9 and hence the only Kac diagrams which can be of positive rank are the five diagrams listed in the Lemma.
Clearly there is exactly one such Kac diagram of order 12.
Since $n_w^{12}=1$ if $n_w\in N_{\hat{G}}(T)$ represents a Coxeter element in $W(F_4)$, there exists at least one automorphism of rank 1 and order 12 and hence this automorphism has Kac diagram $111$.
Let us recall our description of the Kac automorphism corresponding to this class.
Let $\zeta$ be a primitive 12-th root of unity.
Then $\theta$ is the automorphism satisfying:
$$e_{\pm\alpha_1}\mapsto \zeta^{\pm 1} e_{\pm\alpha_3},e_{\pm\alpha_2}\mapsto \zeta^{\pm 1} e_{\pm\alpha_2},e_{\pm\alpha_3}\mapsto \zeta^{\pm 1} e_{\pm\alpha_4},e_{\pm\alpha_4}\mapsto \zeta^{\pm 1} e_{\pm\alpha_1}$$
With this clarification, we can determine which Kac diagrams correspond to $\theta^2$ and $\theta^4$.
(We note that $\theta^2$ and $\theta^4$ are necessarily of positive rank, since $\theta$ is.)
Indeed, the fixed point subspace for $\theta^2$ is spanned by $h_2$, $h_1+h_3+h_4$ and $e_{\pm(\alpha_1+\alpha_2)}+\zeta^{\pm 2} e_{\pm(\alpha_2+\alpha_3)}-\zeta^{\pm 4} e_{\pm(\alpha_2+\alpha_4)}$.
Thus $\theta^2$, which corresponds to an element of $W(F_4)$ of type $F_4(a_2)$, has Kac diagram $101$.
Similarly, the fixed point subspace for $\theta^4$ is spanned by $h_2$, $h_1+h_3+h_4$, $e_{\pm\alpha_1}+\zeta^{\pm 4} e_{\pm\alpha_3}+\zeta^{\mp 4}e_{\pm\alpha_4}$, $e_{\pm(\alpha_1+\alpha_2)}+\zeta^{\pm 2} e_{\pm(\alpha_2+\alpha_3)}-\zeta^{\pm 4} e_{\pm (\alpha_2+\alpha_4)}$ and $e_{\pm(\alpha_1+\alpha_2+\alpha_3)}+e_{\pm(\alpha_2+\alpha_3+\alpha_4)}+e_{\pm(\alpha_1+\alpha_2+\alpha_4)}$.
Thus the dimension of the fixed-point space is 6, and hence the corresponding Kac diagram is $100$.

On the other hand, let $n_w\in N_{\hat{G}}(T)$ be such that $w=n_wT$ is an element of type $C_3$ (resp. $\tilde{A_2}$) and $\Int n_w$, as an automorphism of $\hat{G}$, is conjugate to a Kac automorphism with diagram $01010$ (resp. $10001$).
Then $\Int n_w|_G$ has order 6 (resp. 3) and has rank at least 1 (since $\Lie(T)\cap{\mathfrak g}(1)$ is non-trivial); but $\Int n_w$ has rank 1 (as an automorphism of $\Lie(\hat{G})$) by Lemma \ref{order6} and Corollary \ref{order3}.
Thus $\Int n_w|_G$ is a rank one automorphism, which must also be of type $\tilde{D_4}^{(3)}$ since elements of $W(F_4)$ of type $C_3$ and $\tilde{A_2}$ are of order 3 modulo $W(D_4)$.
Hence there exist automorphisms of order 3 and 6 and of rank 1, which must correspond to the remaining two Kac diagrams as indicated in the diagram.
\end{proof}

\section{Calculation of the Weyl group}

It remains to calculate the little Weyl group for each of the automorphisms of the previous section.
To begin we recall the following straightforward observation \cite[Lemma 4.2]{thetagroups}.
We maintain the assumptions on $\theta$, ${\mathfrak c}$, $T$ from the previous section.

\begin{lemma}\label{w1}
Let $\bar{W}=W^\theta/Z_{W^\theta}({\mathfrak c})$.
Then $\bar{W}$ acts on ${\mathfrak c}$ and $W_{\mathfrak c}$ is a subgroup of $\bar{W}$.
\end{lemma}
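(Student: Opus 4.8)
The plan is to first pin down $\mathfrak{c}$ inside $\mathfrak{t}$ and then transport elements of $N_{G(0)}({\mathfrak c})$ into $N_G(T)$ without altering their action on ${\mathfrak c}$. The two assertions — that $\bar{W}$ acts on ${\mathfrak c}$ and that $W_{\mathfrak c}$ embeds in $\bar{W}$ — then fall out by comparing faithful actions on ${\mathfrak c}$.

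First I would identify ${\mathfrak c}$ with ${\mathfrak t}(1)={\mathfrak t}\cap{\mathfrak g}(1)$. By the choice of $T$ we have ${\mathfrak c}\subset\Lie(T_1)\subset{\mathfrak t}$ and ${\mathfrak c}\subset{\mathfrak g}(1)$, so ${\mathfrak c}\subseteq{\mathfrak t}(1)$; since ${\mathfrak t}=\Lie(T)$ is abelian and consists of semisimple elements, ${\mathfrak t}(1)$ is a commutative subspace of ${\mathfrak g}(1)$ consisting of semisimple elements, and maximality of the Cartan subspace ${\mathfrak c}$ forces ${\mathfrak c}={\mathfrak t}(1)$. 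For the action of $\bar{W}$, given $w\in W^\theta$ with representative $n_w\in N_G(T)$, the defining condition $w\in W^\theta$ means $\theta(n_w)\in n_wT$; since $d\theta$ preserves ${\mathfrak t}$ while $T$ acts trivially on ${\mathfrak t}$, a short computation gives $d\theta\circ\Ad(n_w)=\Ad(n_w)\circ d\theta$ on ${\mathfrak t}$. Hence $w$ preserves the $\zeta$-eigenspace ${\mathfrak t}(1)={\mathfrak c}$ of $d\theta$, so $W^\theta$ acts on ${\mathfrak c}$; as the kernel of this action is by definition $Z_{W^\theta}({\mathfrak c})$, the quotient $\bar{W}$ acts faithfully on ${\mathfrak c}$.

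For the inclusion $W_{\mathfrak c}\leq\bar{W}$, I would realize both groups as subgroups of $\GL({\mathfrak c})$ through their faithful actions and show that the image of $W_{\mathfrak c}$ lands inside that of $\bar{W}$. Fix $g\in N_{G(0)}({\mathfrak c})$. It normalizes $Z_{G(0)}({\mathfrak c})^\circ$ and hence carries the maximal torus $T(0)$ to another maximal torus of $Z_{G(0)}({\mathfrak c})^\circ$; by conjugacy of maximal tori there is $h\in Z_{G(0)}({\mathfrak c})^\circ$ with $hg\,T(0)\,(hg)^{-1}=T(0)$. Since $h$ centralizes ${\mathfrak c}$, replacing $g$ by $hg$ does not change $\Ad(g)|_{\mathfrak c}$, so I may assume $g$ normalizes $T(0)$. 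Then $g$ normalizes both $Z_G(T(0))$ (because it normalizes $T(0)$) and $Z_G({\mathfrak c})$ (because $g\in N_{G(0)}({\mathfrak c})$), hence it normalizes $T=Z_G(T(0))\cap Z_G({\mathfrak c})$. Thus $g\in N_G(T)\cap G(0)$, and its class $w=gT$ lies in $W^\theta$ (as $\theta(g)=g$ and $T$ is $\theta$-stable), with action on ${\mathfrak c}={\mathfrak t}(1)$ equal to $\Ad(g)|_{\mathfrak c}$. This exhibits every element of the image of $W_{\mathfrak c}$ in $\GL({\mathfrak c})$ as the action of an element of $\bar{W}$, giving the embedding.

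I expect the only real obstacle to be the bookkeeping that straightens $g$ into $N_G(T)$ without disturbing its action on ${\mathfrak c}$: namely, verifying that $g$ normalizes the intersection $Z_G(T(0))\cap Z_G({\mathfrak c})$ and that the correcting element $h$ can be chosen inside $Z_{G(0)}({\mathfrak c})^\circ$, so that it acts trivially on ${\mathfrak c}$. The commuting computation and the identification ${\mathfrak c}={\mathfrak t}(1)$ are routine; the conjugacy-of-maximal-tori step inside the reductive group $Z_{G(0)}({\mathfrak c})^\circ$ is where the argument has content, and it relies essentially on $T(0)$ having been chosen as a maximal torus of $Z_{G(0)}({\mathfrak c})^\circ$.
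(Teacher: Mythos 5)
Your proof is correct and is essentially the argument the paper relies on: the paper gives no inline proof, citing \cite[Lemma 4.2]{thetagroups}, and your steps --- identifying ${\mathfrak c}={\mathfrak t}(1)$ by maximality of the Cartan subspace, checking $d\theta\circ\Ad(n_w)=\Ad(n_w)\circ d\theta$ on ${\mathfrak t}$ for $w\in W^\theta$ so that the $\zeta$-eigenspace ${\mathfrak c}$ is preserved, and straightening $g\in N_{G(0)}({\mathfrak c})$ into $N_G(T)$ by a correcting element $h\in Z_{G(0)}({\mathfrak c})^\circ$ that acts trivially on ${\mathfrak c}$ --- are exactly the standard route, correctly exploiting that $T(0)$ is a maximal torus of $Z_{G(0)}({\mathfrak c})^\circ$ and that $T=Z_G(T(0))\cap Z_G({\mathfrak c})$ is then normalized. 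All the verifications check out, including the faithfulness of both actions on ${\mathfrak c}$ that turns the image containment in $\GL({\mathfrak c})$ into the asserted inclusion $W_{\mathfrak c}\leq\bar{W}$.
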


We will see that for all $\theta$-groups of type $G_2$, $F_4$ and $D_4^{(3)}$, $\bar{W}=W_{\mathfrak c}$.
This is not true in general, see Rk. \ref{finalremark}(c).

\begin{lemma}\label{ord}
Let $\Gamma$ be a product of irreducible admissible diagrams of order $m$ and let $w\in W$ be an element of type $\Gamma$.
Let $\Phi_1$ be the smallest root subsystem of $G$ containing all roots in $\Gamma$ and let $\Phi_2$ be the set of roots in $\Phi$ which are orthogonal to $\Phi_1$.
Let ${\mathfrak t}(1) =\{ t\in{\mathfrak t}\, |\,w(t)=\zeta t\}$ and let $W_0=\{ w\in Z_W(w)\, |\, w|_{{\mathfrak t}(1)}=1_{{\mathfrak t}(1)}\}$.
Then $W_0$ contains $W(\Phi_2)$.
\end{lemma}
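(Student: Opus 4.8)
The plan is to verify the two defining conditions of $W_0$ separately for an arbitrary element of $W(\Phi_2)$: that it lies in $Z_W(w)$, and that it restricts to the identity on ${\mathfrak t}(1)$. Since $W(\Phi_2)$ is generated by the reflections $s_\beta$ with $\beta\in\Phi_2$, it suffices to check the centralizer condition on these generators, while the triviality condition will be supplied by a result already in hand.

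For the centralizer condition, I would first record that $w$ lies in $W(\Phi_1)$: following the construction recalled from Carter, $w$ is a product of reflections $s_\alpha$ with $\alpha$ ranging over the roots labelling the nodes of $\Gamma$, and these roots all lie in $\Phi_1$ by the very definition of $\Phi_1$ as the smallest subsystem containing them. Now fix $\beta\in\Phi_2$. By definition $\beta$ is orthogonal to every $\alpha\in\Phi_1$, so $s_\beta(\alpha)=\alpha$ and hence $s_\beta s_\alpha s_\beta^{-1}=s_{s_\beta(\alpha)}=s_\alpha$ for each such $\alpha$. Writing $w$ as a product of reflections $s_\alpha$ with $\alpha\in\Phi_1$, it follows that $s_\beta$ commutes with $w$. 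As $\beta$ was arbitrary and such reflections generate $W(\Phi_2)$, we obtain $W(\Phi_2)\subseteq Z_W(w)$; this is just the standard fact that the Weyl groups of two mutually orthogonal subsystems commute elementwise.

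For the triviality on ${\mathfrak t}(1)$, I would observe that ${\mathfrak t}(1)$ is precisely the space $U=\{t\in{\mathfrak t}\,|\,w(t)=\zeta t\}$ of Lemma \ref{w0prep}, and that $w$ has order $m$: writing $\Gamma=\Gamma^{(1)}\times\cdots\times\Gamma^{(l)}$ and $w=w^{(1)}\cdots w^{(l)}$ accordingly, the factors commute and each has order $m$, so $w$ has order $\mathrm{lcm}(m,\ldots,m)=m$. Lemma \ref{w0prep} then applies directly and gives that every element of $W(\Phi_2)$ acts trivially on $U={\mathfrak t}(1)$. Combining this with the preceding paragraph yields $W(\Phi_2)\subseteq W_0$.

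Because the substantive input is Lemma \ref{w0prep}, which is already established, there is essentially no serious obstacle here; the only points requiring a moment's care are the two reductions used to apply that lemma, namely that $w\in W(\Phi_1)$ (needed for the commuting argument) and that $w$ has order exactly $m$ (needed to invoke \ref{w0prep} with the primitive $m$-th root $\zeta$). Both are immediate from the hypothesis that $\Gamma$ is a product of irreducible admissible diagrams of order $m$, together with the orthogonality of the constituent subsystems.
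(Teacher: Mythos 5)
Your proof is correct and takes exactly the paper's route: the paper's entire proof of this lemma reads ``This is clear from Lemma \ref{w0prep},'' and your argument simply spells out the routine details left implicit there, namely that $W(\Phi_2)$ centralizes $w\in W(\Phi_1)$ by orthogonality of the two subsystems, and that $w$ has order exactly $m$ so that Lemma \ref{w0prep} applies with ${\mathfrak t}(1)$ as the eigenspace $U$.
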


\begin{proof}
This is clear from Lemma \ref{w0prep}.
\end{proof}

From now on, fix $w$ and let $W_1$ (resp. $W_2$) denote the subgroup of $W$ generated by all $s_\alpha$ with $\alpha\in\Phi_1$ (resp. $\Phi_2$).
Then Lemma \ref{ord} shows that the order of $\bar{W}$ (and hence of $W_{\mathfrak c}$) divides $\#(Z_W(w))/\#(W_2)$.
Use of this straightforward observation will allow us to identify $W_{\mathfrak c}$ for all the cases which concern us.
Let $T_i$, $i|m$ be the subtori of $T$ defined in Lemma \ref{stabletori}.
We will need the following lemma (\cite[Lemma 4.3]{thetagroups}).

\begin{lemma}\label{criterion}
Let $T'_m=\prod_{i\neq m}T_i=\{ t^{-1}\theta(t)\, |\, t\in T\}$.
Suppose $\{ t\in T_m=T(0)\, |\, t^m=1\}\subset T'_m$.
Assuming $G$ is simply-connected, $W_{\mathfrak c}=\bar{W}$.
\end{lemma}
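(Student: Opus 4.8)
The plan is to establish the reverse of the inclusion furnished by Lemma \ref{w1}, that is, $\bar W\subseteq W_{\mathfrak c}$. First recall that ${\mathfrak c}\subseteq\Lie(T_1)\cap{\mathfrak g}(1)={\mathfrak t}(1)$, and that $\dim{\mathfrak t}(1)=\dim{\mathfrak c}$ by Lemma \ref{stabtori}(b), so in fact ${\mathfrak c}={\mathfrak t}(1)$. Since any $w\in W^\theta$ commutes with $d\theta$ on ${\mathfrak t}$, it preserves the eigenspace ${\mathfrak t}(1)={\mathfrak c}$, and this is the action underlying $\bar W$. The goal is therefore to show that the automorphism of ${\mathfrak c}$ induced by each $w\in W^\theta$ is already induced by an element of $N_{G(0)}({\mathfrak c})$. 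A crucial auxiliary input is that, because $G$ is simply-connected and $\theta$ is semisimple, $G^\theta=(G^\theta)^\circ=G(0)$ is connected (Steinberg); this is what allows a $\theta$-fixed element to be used directly in $W_{\mathfrak c}$.

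Fix $w\in W^\theta$ and choose $n\in N_G(T)$ with $nT=w$; since $w$ is $\theta$-fixed we have $t:=n^{-1}\theta(n)\in T$. Replacing $n$ by $ns_0$ (for $s_0\in T$) leaves $w$, and hence the action on ${\mathfrak c}$, unchanged, while it replaces $t$ by $t\cdot s_0^{-1}\theta(s_0)$. As $\{s_0^{-1}\theta(s_0):s_0\in T\}$ is precisely the image $T'_m$ of $\overline{p_1}(\theta)$, the obstruction to choosing $ns_0$ to be $\theta$-fixed is exactly the class of $t$ in $T/T'_m$. Thus it suffices to prove $t\in T'_m$: granting this, a suitable $ns_0$ satisfies $\theta(ns_0)=ns_0$, so it lies in $G^\theta=G(0)$, normalizes $T$ and hence ${\mathfrak c}={\mathfrak t}(1)$, and induces $w|_{\mathfrak c}$; this gives $w|_{\mathfrak c}\in W_{\mathfrak c}$ and therefore $\bar W\subseteq W_{\mathfrak c}$.

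The heart of the matter — and the step I expect to be the main obstacle — is to deduce $t\in T'_m$ from the hypothesis. From $\theta^m=\Id$ and the identity $\theta^k(n)=n\cdot t\,\theta(t)\cdots\theta^{k-1}(t)$ one obtains $\overline{N}(\theta)(t)=e$, where $N(x)=1+x+\cdots+x^{m-1}$ satisfies $N(x)(x-1)=x^m-1$. Write $t=t_m t'$ according to the almost direct product $T=T_m\cdot T'_m$ of Lemma \ref{stabletori}, with $t_m\in T_m=T(0)$ and $t'\in T'_m$. Since $f\mapsto\overline{f}(\theta)$ is a ring homomorphism and $t'=\overline{p_1}(\theta)(s)$ for some $s$, we get $\overline{N}(\theta)(t')=\overline{(x^m-1)}(\theta)(s)=e$; and because $\theta$ fixes $T_m$ pointwise, $\overline{N}(\theta)(t_m)=t_m^m$. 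Hence $t_m^m=e$, i.e. $t_m\in\{s\in T(0):s^m=1\}$. By hypothesis this set is contained in $T'_m$, so $t_m\in T'_m$ and therefore $t=t_m t'\in T'_m$, as required.

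The subtlety being exploited is precisely that $\ker\overline{N}(\theta)$ may be strictly larger than its identity component $T'_m$; the element $t$ is forced to lie in $\ker\overline{N}(\theta)$, and the only obstruction to lifting $w$ into $G(0)$ is the image of $t$ in the component group $\ker\overline{N}(\theta)/T'_m$. The computation above pins this obstruction down to the torsion element $t_m\in T(0)$ with $t_m^m=e$, and the stated inclusion $\{s\in T(0):s^m=1\}\subset T'_m$ is exactly the condition guaranteeing that it vanishes. Combined with Lemma \ref{w1}, this yields $W_{\mathfrak c}=\bar W$.
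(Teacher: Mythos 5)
Your proof is correct and follows the same route as the source the paper defers to: the paper states this lemma without proof, citing \cite[Lemma 4.3]{thetagroups}, and your cocycle/norm-map computation is exactly that argument --- the obstruction $t=n^{-1}\theta(n)\in T$ to choosing a $\theta$-fixed representative of $w\in W^\theta$ lies in $\ker\overline{N}(\theta)$ for $N(x)=1+x+\cdots+x^{m-1}$, the norm vanishes on $T'_m$ and acts as $t_m\mapsto t_m^m$ on $T_m=T(0)$, so the hypothesis kills the obstruction, and Steinberg's connectedness of $G^\theta$ for simply-connected $G$ places the corrected representative in $G(0)=G^\theta$, giving $\bar{W}\subseteq W_{\mathfrak c}$ and hence equality by Lemma \ref{w1}. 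One small remark: the identification ${\mathfrak c}={\mathfrak t}(1)$ is immediate from maximality of the Cartan subspace, since ${\mathfrak t}(1)$ is a commutative subspace of ${\mathfrak g}(1)$ consisting of semisimple elements and containing ${\mathfrak c}$; this avoids the extra step your appeal to Lemma \ref{stabtori}(b) tacitly needs, namely that the $\theta$-split part of $T$ coincides with the unique maximal $\theta$-split torus $T_1$ containing ${\mathfrak c}$ in its Lie algebra.
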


Our first result is for maximal rank automorphisms.

\begin{lemma}\label{maxrank}
Suppose $\theta$ is a maximal rank automorphism.
Then $W_0$ is trivial.
Thus $\bar{W}=W_{\mathfrak c}=W^\theta$.
\end{lemma}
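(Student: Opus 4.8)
The plan is to deduce the whole statement from the single fact that the centralizer $Z_W(w)$ acts faithfully on the eigenspace ${\mathfrak t}(1)=\{t\in{\mathfrak t}\,|\,w(t)=\zeta t\}$, and to extract the geometric content of maximal rank first. Writing $\theta=\Int n_w$ with $w=n_wT$, the automorphism $\theta$ acts on $W=N_G(T)/T$ by conjugation by $w$, so $W^\theta=Z_W(w)$. By Lemma~\ref{stabtori}(b) we have $\dim T_1=\dim{\mathfrak c}\cdot\varphi(m)=\rank G$, so the $\theta$-split torus $T_1$ is maximal; since $T_1$ is central in $Z_G(T_1)=Z_G({\mathfrak c})^\circ$, it centralizes both ${\mathfrak c}$ and the maximal torus $T(0)$ of $Z_{G(0)}({\mathfrak c})^\circ$, whence $T_1\subseteq Z_G(T(0))\cap Z_G({\mathfrak c})=T$ and therefore $T=T_1$. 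Consequently ${\mathfrak t}(1)=\Lie(T)\cap{\mathfrak g}(1)=\Lie(T_1)\cap{\mathfrak g}(1)$ has dimension $\dim{\mathfrak c}$ and contains ${\mathfrak c}$, so ${\mathfrak t}(1)={\mathfrak c}$. In particular $W_0=\{w'\in Z_W(w)\,|\,w'|_{{\mathfrak t}(1)}=1\}=Z_{W^\theta}({\mathfrak c})$.

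Granting $W_0=1$ for the moment, the two remaining equalities follow at once. First, $\bar W=W^\theta/Z_{W^\theta}({\mathfrak c})=W^\theta$ by Lemma~\ref{w1}. Second, since $T=T_1$ the torus $T(0)=T_m$ is finite and connected, hence trivial; thus the inclusion $\{t\in T(0)\,|\,t^m=1\}\subseteq T'_m$ of Lemma~\ref{criterion} holds vacuously and gives $W_{\mathfrak c}=\bar W$. So everything reduces to proving that $w'\in Z_W(w)$ with $w'|_{{\mathfrak t}(1)}=1$ is trivial.

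Because $\theta$ has positive rank, $w$ has order exactly $m$, and maximal rank forces the characteristic polynomial of $w$ on ${\mathfrak t}$ to be $p_m^{\dim{\mathfrak c}}$: the $\zeta$-eigenspace ${\mathfrak t}(1)$ together with its $\varphi(m)$ Galois conjugates already fills up ${\mathfrak t}$, so every eigenvalue of $w$ is a primitive $m$-th root of unity and $p_m(w)=0$. As $p>3$ and the relevant Weyl groups have orders divisible only by $2$ and $3$, we have $p\nmid|W|$; hence the reflection representation and the action on it of the finite abelian group $\langle w,w'\rangle$ are the reduction modulo $p$ of their characteristic-zero counterparts, with all joint eigenspace dimensions preserved. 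It therefore suffices to show $w'=1$ in characteristic zero. There, $p_m(w)=0$ makes $V:=Y(T)\otimes\mathbb{Q}$ a vector space over the cyclotomic field $\mathbb{Q}(\zeta_m)\cong\mathbb{Q}[x]/(p_m)$ on which $w$ acts as $\zeta_m$, and $w'$, commuting with $w$, is $\mathbb{Q}(\zeta_m)$-linear. Fixing an embedding $\iota:\mathbb{Q}(\zeta_m)\hookrightarrow\mathbb{C}$ with $\zeta_m\mapsto\zeta$, the $\zeta$-eigenspace of $w$ on $V\otimes_{\mathbb{Q}}\mathbb{C}$ is $V\otimes_{\mathbb{Q}(\zeta_m),\iota}\mathbb{C}$, on which $w'$ acts as $w'\otimes 1$; if this is the identity then, $\iota$ being an embedding of fields, $w'=1$ on $V$ and hence $w'=1$ in $W$. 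This proves $W_0=1$.

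The crux --- and the only real obstacle --- is this faithfulness. Over $k$ a naive argument fails: Frobenius only relates ${\mathfrak t}(1)$ to the eigenspaces indexed by the subgroup $\langle p\rangle\le(\mathbb{Z}/m)^\times$, which is in general proper, so triviality of $w'$ on ${\mathfrak t}(1)$ does not visibly propagate to all of ${\mathfrak t}$. The point of passing to characteristic zero is precisely that there the full Galois group $(\mathbb{Z}/m)^\times$, encoded in the cyclotomic field structure, links all the eigenspaces at once. Everything else --- the identifications $T=T_1$, ${\mathfrak t}(1)={\mathfrak c}$, and the triviality of $T(0)$ --- is routine bookkeeping given Lemmas~\ref{stabtori} and~\ref{criterion}.
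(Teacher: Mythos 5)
Your proposal is correct, but the heart of it --- faithfulness of $Z_W(w)$ on ${\mathfrak t}(1)$ --- is proved by a genuinely different mechanism from the paper's. The paper stays inside $G$: since $G$ is simply connected, $Z_G({\mathfrak c})$ is connected and equals $Z_G(T_1)$ (recorded at the start of Sect.~4), and in the maximal rank case $T_1$ is a maximal torus, so $Z_G({\mathfrak c})=Z_G(T_1)=T_1=T$; hence $Z_{N_G(T)}({\mathfrak c})=T$ and $W_0$ is trivial in one line --- in fact the \emph{whole} pointwise stabilizer of ${\mathfrak c}$ in $W$ is trivial, not merely its intersection with $Z_W(w)$. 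The final step $W_{\mathfrak c}=\bar W$ via Lemma~\ref{criterion} is the same in both (the paper says $T'_m=T$; your ``$T_m=T(0)$ is finite and connected, hence trivial'' is the same point). Your substitute is a Springer-style regularity argument: maximal rank forces the characteristic polynomial of $w$ on ${\mathfrak t}$ to be $p_m^{\dim{\mathfrak c}}$ (justified, as you implicitly do, via $T=T_1$ and Lemma~\ref{stabletori} giving ${\mathfrak t}=\bigoplus_{(i,m)=1}{\mathfrak t}(i)$, plus the paper's lemma that the characteristic polynomial of $w$ on ${\mathfrak t}$ is the reduction of $c_w$), so $Y(T)\otimes{\mathbb Q}$ is a ${\mathbb Q}(\zeta_m)$-vector space and a commuting $w'$ trivial on one complex eigenspace is trivial everywhere. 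This buys independence from Steinberg's connectedness theorem and works verbatim for the extended Weyl group in the outer $D_4^{(3)}$ case; what it costs is the reduction-to-characteristic-zero step, where you invoke $p\nmid|W|$ --- harmless here since $|W(G_2)|$, $|W(F_4)|$, $|W(D_4)|$ are of the form $2^a3^b$ and $p>3$ is assumed, but the argument would need extra care in type $E$ (e.g.\ $p=7$ is good for $E_7$ yet divides $|W(E_7)|$), a sensitivity the paper's proof does not have.
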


\begin{proof}
By assumption, $G$ is simply-connected.
But now $Z_G({\mathfrak c})$ is connected, and therefore equals $T$.
Thus $Z_{N_G(T)}({\mathfrak c})=Z_G({\mathfrak c})=T$.
This shows that $W_0$ is trivial.
Furthermore, $T'_m=T$ by assumption on $\theta$, thus any element of $\bar{W}$ has a representative in $G(0)$ by Lemma \ref{criterion}.
\end{proof}

Finally, we have the following preparatory lemma, which appeared in \cite{panslice} in characteristic zero, and was generalised to positive characteristic in \cite[Prop. 5.3]{thetagroups}.
Recall that a {\it KW-section} for $\theta$ is an affine linear subvariety ${\mathfrak v}\subset{\mathfrak g}(1)$ such that the restriction of the categorical quotient $\pi:{\mathfrak g}(1)\rightarrow{\mathfrak g}(1)\quot G(0)$ to ${\mathfrak v}$ is an isomorphism.

\begin{lemma}\label{Nreg}
Suppose $\theta$ is an N-regular automorphism.
Then $k[{\mathfrak t}]^W\rightarrow k[{\mathfrak c}]^{W_{\mathfrak c}}$ is surjective.
In particular, if $\theta$ is inner then the degrees of the generators of $k[{\mathfrak c}]^{W_{\mathfrak c}}$ are simply those degrees of the invariants of ${\mathfrak g}$ which are divisible by $m$.

Moreover, $\theta$ admits a KW-section.
\end{lemma}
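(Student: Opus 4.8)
The plan is to build an explicit KW-section out of a Kostant–Slodowy slice adapted to the grading, and to extract the first assertion as a by-product. Since $\theta$ is N-regular there is a regular nilpotent $e\in{\mathfrak g}(1)$, and the first step is to complete $e$ to an $\mathfrak{sl}_2$-triple $(e,h,f)$ compatible with the grading, i.e. with $h\in{\mathfrak g}(0)$ and $f\in{\mathfrak g}(m-1)$. Such a triple exists because, under the standard hypotheses, Jacobson--Morozov applies and the triples completing $e$ are permuted transitively by a connected $\theta$-stable unipotent subgroup of $Z_G(e)$; a fixed-point argument using $p\nmid m$ then produces a $\theta$-stable choice (this is the point where the characteristic hypothesis enters). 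Because $f\in{\mathfrak g}(m-1)$, the centraliser ${\mathfrak g}^f=\ker(\ad f)$ inherits the grading, and I set ${\mathfrak v}=(e+{\mathfrak g}^f)\cap{\mathfrak g}(1)=e+({\mathfrak g}^f\cap{\mathfrak g}(1))$, an affine-linear subspace of ${\mathfrak g}(1)$.

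Next I would invoke Kostant's section theorem in the form valid under the standard hypotheses: restriction $k[{\mathfrak g}]^G\rightarrow k[S]$ along $S=e+{\mathfrak g}^f$ is an isomorphism. Restricting further to the coordinate subspace ${\mathfrak v}\subset S$ shows $k[{\mathfrak g}]^G\rightarrow k[{\mathfrak v}]$ is surjective; since every $G$-invariant restricts on ${\mathfrak g}(1)$ to a $G(0)$-invariant, this factors as $k[{\mathfrak g}]^G\rightarrow k[{\mathfrak g}(1)]^{G(0)}\xrightarrow{\pi|_{\mathfrak v}^*}k[{\mathfrak v}]$, so $\pi|_{\mathfrak v}^*$ is surjective and $\pi|_{\mathfrak v}\colon{\mathfrak v}\rightarrow{\mathfrak g}(1)\quot G(0)$ is a closed immersion. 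Injectivity of $\pi|_{\mathfrak v}$ on points is equally clean: if $v_1,v_2\in{\mathfrak v}$ satisfy $\pi(v_1)=\pi(v_2)$ then they have the same image in ${\mathfrak g}\quot G$, and Kostant's theorem forces $v_1=v_2$. Feeding the surjectivity of $\pi|_{\mathfrak v}^*$ through the commuting square that relates the Chevalley restriction isomorphism $k[{\mathfrak g}]^G\cong k[{\mathfrak t}]^W$ to the isomorphism $k[{\mathfrak g}(1)]^{G(0)}\cong k[{\mathfrak c}]^{W_{\mathfrak c}}$ of Lemma \ref{stabtori}(c) yields the surjectivity of $k[{\mathfrak t}]^W\rightarrow k[{\mathfrak c}]^{W_{\mathfrak c}}$ asserted first.

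It then remains to upgrade the closed immersion $\pi|_{\mathfrak v}$ to an isomorphism, and this I expect to be the main obstacle: I must show $\pi|_{\mathfrak v}$ is dominant, equivalently $\dim{\mathfrak v}=\dim{\mathfrak c}$. The device is a weight count for the contracting action $\rho(t)=t^2\Ad\lambda(t)^{-1}$, where $\lambda$ is the cocharacter with $d\lambda(1)=h$; then $\lambda$ lands in $G(0)$, so $\rho$ preserves ${\mathfrak g}(1)$, and $\rho$ fixes $e$ while contracting ${\mathfrak v}$ to $e$. The lowest-weight vectors of the $\mathfrak{sl}_2$-isotypic components span ${\mathfrak g}^f$ and carry $\ad h$-weights $-2m_i$, where $m_1,\dots,m_{\rank G}$ are the exponents. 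In the inner case $e=\sum_{\alpha\in\Delta}c_\alpha e_\alpha$ forces $d\theta=\Ad\lambda(c_0)$ with $c_0^2=\zeta$, so such a vector lies in ${\mathfrak g}(1)$ exactly when $d_i=m_i+1\equiv 0\pmod m$. This identifies $\dim{\mathfrak v}=\#\{i:m\mid d_i\}$ and, via the surjection above, pins down the generating degrees of $k[{\mathfrak c}]^{W_{\mathfrak c}}$ as precisely the degrees of the ${\mathfrak g}$-invariants divisible by $m$ — the ``in particular'' clause. To match this count with $\dim{\mathfrak c}$ I would either quote Vinberg's dimension formula for N-regular gradings, or argue directly that the orbit map $G(0)\times{\mathfrak v}\rightarrow{\mathfrak g}(1)$ is dominant, using that a generic point of ${\mathfrak v}\subset S$ is regular semisimple in ${\mathfrak g}$ together with the injectivity already proved.

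Once $\pi|_{\mathfrak v}$ is established as a bijective closed immersion onto the (normal, equidimensional) variety ${\mathfrak g}(1)\quot G(0)$, it is an isomorphism, so ${\mathfrak v}$ is the required KW-section. The only genuinely new ingredient over characteristic zero is the validity of Jacobson--Morozov, of Kostant's section, and of the separability needed to make the triple $\theta$-stable, all supplied by the standard hypotheses; this is exactly the content of the generalisation \cite[Prop. 5.3]{thetagroups} of Panyushev's argument in \cite{panslice}.
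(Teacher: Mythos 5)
Your proposal reconstructs exactly the argument that the paper itself does not reproduce but quotes from the literature: Lemma \ref{Nreg} is cited there from Panyushev \cite{panslice} (characteristic zero) and its positive-characteristic generalisation \cite[Prop.~5.3]{thetagroups}, and your route — graded $\mathfrak{sl}_2$-triple $(e,h,f)$ with $f\in{\mathfrak g}(m-1)$, the slice ${\mathfrak v}=e+({\mathfrak g}^f\cap{\mathfrak g}(1))$ inside the Kostant section, the contracting action $t^2\Ad\lambda(t)^{-1}$, and the weight count showing $\dim{\mathfrak v}=\#\{i:\,m\mid d_i\}$ — is precisely that slice argument, so the approach is correct and essentially the same. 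One reordering is needed in your write-up: surjectivity of $\pi|_{\mathfrak v}^*$ alone does not give surjectivity of $k[{\mathfrak t}]^W\rightarrow k[{\mathfrak c}]^{W_{\mathfrak c}}$ via the commuting square (that map corresponds to the \emph{first} factor $k[{\mathfrak g}]^G\rightarrow k[{\mathfrak g}(1)]^{G(0)}$, not the second), so you must first finish the dimension count making $\pi|_{\mathfrak v}^*$ injective as well, and only then transfer the surjectivity of the composite $k[{\mathfrak g}]^G\rightarrow k[{\mathfrak g}(1)]^{G(0)}\rightarrow k[{\mathfrak v}]$ to the first factor to obtain the opening assertion.
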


We have the following application of Lemma \ref{maxrank}.
We identify automorphisms by Weyl group elements $w$: that is, $w$, where $\theta$ is conjugate to $\Int n_w$ as described in Sect. 4.
Let $G_t$ for the $t$-th group in the Shephard-Todd classification.

\begin{lemma}\label{maxrankweyl}
For the maximal rank automorphisms, the Weyl group is as described in the following list:

(a) {\it Type $G_2$.} Coxeter element: $W_{\mathfrak c}=\mu_6$; $A_2$: $W_{\mathfrak c}=\mu_6$;  $A_1\times \tilde{A_1}$: $W_{\mathfrak c}=W(G_2)$.

(b) {\it Type $F_4$.} Coxeter element: $W_{\mathfrak c}=\mu_{12}$; $B_4$: $W_{\mathfrak c}=\mu_{8}$; $F_4(a_1)$ or $A_2\times\tilde{A_2}$: $W_{\mathfrak c}=G_5$; $D_4(a_1)$: $W_{\mathfrak c}=G_8$; $A_1^4$: $W_{\mathfrak c}=W(F_4)$.

(c) {\it Type ${D}_4^{(3)}$.} Type $F_4$: $W_{\mathfrak c}=\mu_{4}$; $F_4(a_1)$ or $A_2\times\tilde{A_2}$: $W_{\mathfrak c}=G_4$.
\end{lemma}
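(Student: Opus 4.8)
The goal is to identify the little Weyl group $W_{\mathfrak c}$ for each maximal rank automorphism listed in parts (a), (b) and (c). By Lemma~\ref{maxrank}, in every maximal rank case we already know that $W_{\mathfrak c}=\bar W=W^\theta$, so the entire problem reduces to computing the fixed-point subgroup $W^\theta$ and recognising it in the Shephard--Todd list. Thus the plan is purely computational: for each Weyl group element $w$ (which represents $\theta$ via $\theta=\Int n_w$, as set up in Sect.~4), I would compute $Z_W(w)=W^\theta$ explicitly using Carter's tables \cite{carter}, then identify the resulting group up to isomorphism and verify it acts on ${\mathfrak c}$ as the claimed complex reflection group.

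\textbf{The inner cases (types $G_2$ and $F_4$).}
For these, $W^\theta=Z_W(w)$ is read off directly from the centralizer orders given in \cite[Tables~8--12]{carter}. First I would treat the Coxeter-element cases: when $w$ is a Coxeter element, $Z_W(w)$ is cyclic of order $h$, generated by $w$ itself, giving $W_{\mathfrak c}=\mu_h$ (so $\mu_6$ in $G_2$, $\mu_{12}$ in $F_4$, $\mu_8$ for the $B_4$ class which has order $8$). For the powers of the Coxeter element and the remaining regular classes, I would compute $\#(Z_W(w))$ from Carter's tables and then match the order and the structure of the action on ${\mathfrak c}$ (of dimension $\rank G/\varphi(m)$, by Lemma~\ref{stabtori}(b)) against the Shephard--Todd groups. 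For the rank-one cases ($A_2$ in $G_2$; the several rank-one $F_4$ classes) the space ${\mathfrak c}$ is a line and the group is forced to be cyclic $\mu_N$ for the appropriate $N$. For $A_1\times\tilde A_1$ in $G_2$ and $A_1^4$ in $F_4$, the automorphism $\theta$ is an involution of maximal rank, so ${\mathfrak c}=\Lie(T)\cap{\mathfrak g}(1)$ has full rank and $W^\theta=W$ itself, yielding $W(G_2)$ and $W(F_4)$ respectively. The genuinely interesting identifications are the rank-two classes $F_4(a_1)$, $A_2\times\tilde A_2$ (both claimed to give $G_5$) and $D_4(a_1)$ (claimed $G_8$): here I would compute the order of $Z_W(w)$ (one checks $|G_5|=72$ and $|G_8|=96$) and confirm that the generators act as pseudoreflections of the correct orders on the two-dimensional space ${\mathfrak c}$, so that the group is pinned down uniquely in the Shephard--Todd classification.

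\textbf{The triality cases (type $D_4^{(3)}$).}
For part (c) the group $G$ is of type $D_4$ and $\theta$ is an outer (triality) automorphism, so I would exploit the embedding of Sect.~4: $D_4=\Spin(8,k)$ sits inside $\hat G$ of type $F_4$ as the long-root subgroup, with $W(D_4)\trianglelefteq W(F_4)$ of index $6$, and triality classes of $D_4$ correspond to $F_4$-classes whose image in $W(F_4)/W(D_4)$ has order $3$. The $F_4$-type class (the analogue of the $F_4$ Coxeter element) has $\mu_4$ because its relevant centralizer data forces a cyclic group of order~$4$; and the classes $F_4(a_1)$ and $A_2\times\tilde A_2$ again give a two-dimensional ${\mathfrak c}$, this time with $W_{\mathfrak c}=G_4$ (of order $24$), which I would identify by the same order-and-pseudoreflection argument as in the inner rank-two cases.

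\textbf{The main obstacle.}
The routine part is extracting centralizer orders from Carter; the real work is the precise identification of the two-dimensional cases $G_5$, $G_8$ and $G_4$. Merely matching the group order does not distinguish, say, $G_5$ from other order-$72$ reflection groups on $\mathbb{C}^2$, so I expect the crux to be verifying the \emph{degrees} of the reflections and the \emph{degrees of the invariants}. For the N-regular automorphisms among these (which by Lemma~\ref{coxpowers} includes the maximal rank inner cases), Lemma~\ref{Nreg} is the key tool: it gives that $k[{\mathfrak t}]^W\to k[{\mathfrak c}]^{W_{\mathfrak c}}$ is surjective and pins down the degrees of the generators of $k[{\mathfrak c}]^{W_{\mathfrak c}}$ as exactly those fundamental degrees of $W$ that are divisible by $m$. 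Computing those degrees (e.g. the $F_4$ degrees $2,6,8,12$, selecting those divisible by $m$) and comparing with the known invariant degrees of the Shephard--Todd groups is what uniquely determines $G_5$, $G_8$, $G_4$, and this degree bookkeeping is where I would concentrate the care.
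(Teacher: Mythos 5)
Your reduction via Lemma \ref{maxrank} to computing $Z_W(w)$, the treatment of the rank-one and involution cases, and the use of Carter's centralizer orders all match the paper. But your proposed method for the crucial rank-two identifications has a genuine gap: the combination of group order and invariant degrees does \emph{not} determine the group in the Shephard--Todd classification, so the ``degree bookkeeping'' you propose as the crux cannot finish the proof. Concretely, $G_5$, $G(6,1,2)$ and $G(12,4,2)$ all have order $72$ and degrees $6,12$; the groups $G_8$, $G(12,3,2)$, $G_{13}$ and $\mu_8\times\mu_{12}$ all have order $96$ and degrees $8,12$; and $G(6,3,2)$ shares order $24$ and degrees $4,6$ with $G_4$. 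The paper uses Lemma \ref{Nreg} exactly as you suggest, but only to cut the candidate list down via the Clark--Ewing classification of rank-two pseudoreflection groups; it then needs genuinely finer structural input to select the right group from each list. Moreover, the degree statement in Lemma \ref{Nreg} is formulated only for \emph{inner} $\theta$, so for the triality cases in (c) even that first step is not available in the form you invoke it.

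What the paper actually does, and what is missing from your plan, is the following. It first handles the $D_4^{(3)}$ rank-two cases: writing $w=w_1w_2$ with $w_1$ of type $A_2$ in $W_l=W(D_4)$, it exhibits an element of $Z_{W_l}(w)$ acting on ${\mathfrak c}$ with characteristic polynomial $(t-\zeta)(t-1)$, i.e.\ a reflection of order $3$; since $G(6,3,2)$ and $G(12,12,2)$ contain no such reflection, and commutativity is excluded because $w_0^3$ (for $w=w_0^4$, $w_0$ a Coxeter element) lies non-centrally in $Z_{W_l}(w)$, this forces $G_4$. It then leverages the normal inclusion $Z_{W_l}(w)\trianglelefteq Z_W(w)$ of index $3$: a rank-two non-commutative reflection group of order $72$ with degrees $6,12$ containing $G_4$ as a normal subgroup must be $G_5$ (neither $G(6,1,2)$ nor $G(12,4,2)$ qualifies). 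For $D_4(a_1)$, it uses that $\bar W$ contains a normal copy of $G(4,2,2)$, an explicit matrix computation showing $G(4,2,2)$ is not normal in $G(12,3,2)$ (which also rules out $\mu_8\times\mu_{12}$), and Cohen's fact that $G_{13}$ has no reflections of order $4$, leaving $G_8$. Note also the order of argument matters: the outer $D_4^{(3)}$ computation is a stepping stone for the inner $F_4$ case $G_5$, whereas your plan treats (b) before (c) and would have no access to the normal-subgroup criterion. Without arguments of this kind --- reflection orders realized in $W_{\mathfrak c}$, characteristic polynomials of specific elements, and normal subgroups coming from $W_l\trianglelefteq W$ --- your proof stalls precisely at the identifications of $G_5$, $G_8$ and $G_4$ that the lemma asserts.
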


\begin{proof}
For the rank 1 inner automorphisms, this follows on reading off the orders of conjugacy classes (and hence centralizers) in \cite{carter}.
Moreover, since the maximal rank involutions in type $G_2$ and $F_4$ lie in the centre of the Weyl group, it remains only to check type ${D}_4^{(3)}$ and classes $F_4(a_1)$, $A_2\times\tilde{A_2}$ and $D_4(a_1)$ in type $F_4$.

Let $W$ be a Weyl group of type $F_4$ and let $W_l$ be the subgroup of $W$ generated by all $s_\alpha$ with $\alpha$ a long root.
Then $W_l$ is isomorphic to the Weyl group of type $D_4$, and is a normal subgroup of $W$ of index 6.
For an element in class $F_4(a_1)$ or $A_2\times\tilde{A_2}$, the centralizer in $W$ has order 72 by \cite{carter}.
Moreover, the square of an element in class $F_4(a_1)$ is an element in class $A_2\times\tilde{A_2}$, thus the little Weyl group for these two cases is equal.
There are six cosets of $W_l$ in $W$ and the factor group is isomorphic to $S_3$.
It follows that if $w$ is an element of $W$ which has order 3 modulo $W_l$ then $Z_{W_l}(w)$ is a normal subgroup of $Z_W(w)$ of index 3.
In particular, $Z_{W_l}(w)$ has order 4 (resp. 24) if $w$ is of Coxeter type (resp. of type $F_4(a_1)$ or $A_2\times\tilde{A_2}$).
Thus it is clear that $Z_{W_l}(w)\cong\mu_4$ in the case where $w$ is a Coxeter element of type $F_4$.

We claim that if $w$ is of type $A_2\times \tilde{A_2}$ then $Z_{W_l}(w)$ is isomorphic to $G_4$.
Indeed, $\#(Z_{W_l}(w))=24$, and thus the only other possibilities are $G(6,3,2)$, $G(12,12,2)$ or a product of two cyclic groups \cite{clark-ewing}.
(We require coprimeness of the characteristic here, which is automatic since we assume $\charac k=0$ or $\charac k>3$.)
But if $w$ is the fourth power of a Coxeter element $w_0$ then $w_0^3\in Z_{W_l}(w)$ is non-central and thus $Z_{W_l}(w)$ cannot be commutative.
If we write $w$ as $w_1w_2=w_2w_1$, where $w_1$ is an element of type $A_2$ and $w_2$ is an element of type $\tilde{A_2}$ then we can construct a basis $\{Êc_1,c_2\}$ for ${\mathfrak c}$ such that $w_i(c_j)=\zeta^{\delta_{ij}}c_j$.
Moreover, $w_1\in W_l$ and hence there exists an element of $Z_{W_l}(w)$ with characteristic polynomial $(t-\zeta)(t-1)$ as an automorphism of ${\mathfrak c}$.
Since there is no such element of $G(6,3,2)$ or $G(12,12,2)$, $Z_{W_l}(w)$ is isomorphic to $G_4$.
This proves the remaining cases in type $D_4^{(3)}$.
But now $Z_W(w)$ is a non-commutative pseudoreflection group of rank 2 which has polynomial generators of degree 6 and 12 by Lemma \ref{Nreg}, and thus is either $G(6,1,2)$, $G(12,4,2)$ or $G_5$.
Since $Z_W(w)$ contains $G_4$ as a normal subgroup, it follows that it is isomorphic to $G_5$.

For an element of type $D_4(a_1)$, the centralizer in $W$ has order 96 and has degrees 8 and 12 by Lemma \ref{Nreg}.
Thus the only possibilities for $Z_W(w)$ are $G(12,3,2)$, $G_8$, $G_{13}$ or $\mu_8\times\mu_{12}$ \cite{clark-ewing}.
Since $W_l$ is a normal subgroup of $W$, $\bar{W}$ contains a normal subgroup which is isomorphic to $G(4,2,2)$.
This rules out $\mu_8\times\mu_{12}$ and $G(12,3,2)$ since for example if $\xi$ is a primitive 12-th root of unity $$\begin{pmatrix}Ê\xi^2 & 0 \\Ê0 & \xi \end{pmatrix}\begin{pmatrix} 0 & 1 \\Ê1 & 0 \end{pmatrix} \begin{pmatrix}Ê\xi^{-2} & 0 \\Ê0 & \xi^{-1}Ê\end{pmatrix}=\begin{pmatrix} \xi & 0 \\Ê0 & \xi^{-1}Ê\end{pmatrix}\begin{pmatrix}Ê0 & 1 \\ 1 & 0 \end{pmatrix}$$ and hence $G(4,2,2)$ is not normal in $G(12,3,2)$.
Moreover, by \cite[p. 395]{cohen}, $G_{13}$ contains no reflections of order 4.
Thus $\bar{W}$ is equal to $G_8$.
\end{proof}

%In the following two lemmas we identify the remaining (positive rank) automorphisms in type $F_4$ by the action on a maximal torus whose Lie algebra contains ${\mathfrak c}$.
%Thus when we say for example that $w$ is of type $C_3$, we mean that $\theta$ is an automorphism with Kac diagram $01010$ (see Lemma \ref{coxpowers}).
For $w\in W$, let $\Phi_1$ be the smallest root subsystem of $\Phi$ containing all roots corresponding to vertices of the admissible diagram for $w$, let $\Phi_2$ be the set of roots in $\Phi$ which are orthogonal to all elements of $\Phi_1$ and let $W_i$ be the subgroup of $W$ generated by all $s_\alpha$ with $\alpha\in \Phi_i$ ($i=1,2$).
Let $L_1$ be the Levi subgroup of $G$ generated by $T$ and all $U_\alpha$ with $\alpha\in\Phi_1$.
(See the discussion in the paragraph preceding Lemma \ref{orthog}.)
Then $L_1$ is $\theta$-stable and ${\mathfrak c}\subset\Lie(L'_1)$, hence one can also consider the little Weyl group in $L_1$, which is naturally a subgroup of $W_{\mathfrak c}$.

\begin{lemma}
For the following automorphisms in type $F_4$, $\bar{W}=W_{\mathfrak c}$ is equal to the little Weyl group one obtains on restricting to the subgroup $L_1$.
We have:

(a) $W_{\mathfrak c}=\mu_6$ if $w$ is of type $C_3$ or $B_3$, 

(b) $W_{\mathfrak c}=\mu_4$ if $w$ is of type $B_2$.

(c) $W_{\mathfrak c}=\mu_2$ if $w$ is of type $\tilde{A}_1$.
\end{lemma}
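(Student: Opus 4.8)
The plan is to sandwich $W_{\mathfrak c}$ between a cyclic group coming from restriction to $L_1$ (the lower bound) and Carter's centralizer data (the upper bound), and to check that the two bounds coincide. First I would identify, in each of the four cases, the subsystem $\Phi_1$ spanned by the admissible diagram: here $L_1'$ is simple of type $C_3$, $B_3$, $B_2$ and $\tilde{A}_1$ (a short-root $A_1$) respectively, and $w'=w|_{\Phi_1}$ is a Coxeter element of $W(\Phi_1)$. As established in the analysis of these classes in the previous section (for instance Lemma \ref{order6} for the $C_3$ and $B_3$ cases, and the corresponding order-$4$ and order-$2$ cases for $B_2$ and $\tilde{A}_1$), the restriction $\theta|_{L_1'}$ is a Coxeter automorphism of this (classical, or rank-one) group. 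By the determination of the little Weyl group of a Coxeter automorphism of a simple group in \cite{thetagroups} (it is cyclic of order equal to the Coxeter number $h(\Phi_1)$; compare Lemma \ref{maxrankweyl}), the little Weyl group of the $\theta$-group on $L_1$ is $\mu_6$, $\mu_6$, $\mu_4$, $\mu_2$ respectively. Since this little Weyl group embeds into $W_{\mathfrak c}$ (as noted in the paragraph preceding the lemma) and $W_{\mathfrak c}\subseteq\bar{W}$ by Lemma \ref{w1}, I obtain $h(\Phi_1)\leq |W_{\mathfrak c}|\leq |\bar{W}|$.

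Next I would supply the matching upper bound through Lemma \ref{ord}. For this I record $\Phi_2$, the roots orthogonal to $\Phi_1$: by inspection of the $F_4$ root system it is of type $A_1$, $\tilde{A}_1$, $B_2$ and $B_3$ in the four cases, so $\#W_2=2,2,8,48$. (In the $B_2$ case the orthogonal complement of the $B_2$ in the $e_1,e_2$-plane is the $B_2$ in the $e_3,e_4$-plane; in the $\tilde{A}_1$ case the orthogonal complement of a short root is a $B_3$.) Reading the centralizer orders from Carter's tables \cite[Table 8]{carter} gives $\#Z_W(w)=12,12,32,96$, so by Lemma \ref{ord} the order of $\bar{W}$ divides $\#Z_W(w)/\#W_2=6,6,4,2$; that is, $|\bar{W}|\leq h(\Phi_1)$ in each case.

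Combining the two bounds forces $h(\Phi_1)=|W_{\mathfrak c}|=|\bar{W}|$, and since the cyclic group $\mu_{h(\Phi_1)}$ from $L_1$ sits inside $W_{\mathfrak c}\subseteq\bar{W}$, all three groups coincide; this simultaneously yields $\bar{W}=W_{\mathfrak c}$ and its identification with the little Weyl group of $L_1$, so that Lemma \ref{criterion} is not even needed here. The one genuinely delicate point is that the upper bound must be \emph{tight}: one has to verify that $\#Z_W(w)/\#W_2$ equals exactly the Coxeter number $h(\Phi_1)$ in every case, which rests on the correct reading of the centralizer orders together with the correct identification of $\Phi_2$ (in particular, in the $\tilde{A}_1$ case, that the stabilizer in $W$ of the short root $\beta$ is precisely $W(\Phi_2)=W(B_3)$, equivalently that $W$ acts transitively on the $24$ short roots and $\#W/24=48$). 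Once these numbers are pinned down the result is immediate, and the remainder is routine bookkeeping matching them against $h(\Phi_1)$.
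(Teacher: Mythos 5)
Your proposal is correct and follows essentially the same route as the paper: the printed proof consists precisely of the upper bound via Lemma \ref{ord} and Carter's Table 8 (centralizer orders $12,12,32,96$ divided by $\#W_2=2,2,8,48$), with the matching lower bound left implicit in the preceding paragraph (the little Weyl group of $\theta|_{L_1}$, a Coxeter automorphism of $L_1'$, embeds in $W_{\mathfrak c}\subseteq\bar{W}$). You merely make that lower bound and the identification of $\Phi_2$ explicit, which is a faithful elaboration rather than a different argument.
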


\begin{proof}
This is a straightforward observation of the orders of the centralizer and the subgroup $W_2$ (see \cite[Table 8]{carter}).
If $w$ is of type $C_3$ or $B_3$ then $\Phi_2$ has a basis consisting of one element and hence $W_2=W(\Phi_2)$ has order 2.
But the centralizer of $w$ has order 12, thus the order of $\bar{W}$ divides 6.
If $w$ is of type $B_2$ then $\Phi_2$ is isomorphic to $B_2$ (hence $W_2$ has order 8) and the centralizer of $w$ has order 32, thus the order of $\bar{W}$ divides 4.
Finally, if $w$ is of type $\tilde{A}_1$ then $\Phi_2=B_3$ and hence the order of $W_2$ is 48.
Since the order of the centralizer is 96 by \cite{carter}, this shows that $\bar{W}=\mu_2$.
\end{proof}

We remark that in other types, $W_{\mathfrak c}$ may not be equal to the group one obtains on restricting to $L_1$.

\begin{lemma}\label{lastf4}
If $\theta=\Int n_w$ is an automorphism in type $F_4$ such that $w$ is of type $A_2$ or $\tilde{A_2}$ then $\bar{W}=W_{\mathfrak c}=\mu_6$.
In fact, there exists a $\theta$-stable semisimple subgroup $L$ of $G$ of type $B_3$ (if $w$ is of type $A_2$) or type $C_3$ (if $w$ is of type $\tilde{A}_2$) such that ${\mathfrak c}\subset\Lie(L)$, each element of $W_{\mathfrak c}$ has a representative in $L(0)$, and $\theta|_L$ is N-regular.
\end{lemma}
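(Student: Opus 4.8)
The plan is to realise $\theta|_L$ as the square of a Coxeter automorphism of a rank-$3$ subgroup $L$, compute the little Weyl group there to get a lower bound on $W_{\mathfrak c}$, and then bound $\bar W$ from above to force equality.

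First I would construct $L$. By Corollary \ref{order3} (and its proof) an automorphism of type $A_2$ (resp. $\tilde A_2$) is conjugate to the square of the Kac automorphism with diagram $11100$ (resp. $01010$), and by Lemma \ref{order6} the latter restricts to a Coxeter automorphism of the derived group $H'$ of the Levi subgroup $H$ of type $B_3$ (resp. $C_3$) exhibited there. Conjugating, I set $L$ to be the corresponding $\theta$-stable image of $H'$; then $\theta|_L$ is exactly the square of a Coxeter automorphism $\psi$ of $L$, and in particular is inner (it is $\Int n_{w_L}^2$ with $n_{w_L}\in L$). To see that $\theta|_L$ is N-regular, recall from Lemma \ref{cox} and the proof of Lemma \ref{coxpowers} that $\psi$ sends $e_\beta\mapsto \eta e_\beta$ for each simple root $\beta$ of $L$, where $\eta$ is a primitive $6$-th root of unity ($6$ being the Coxeter number of both $B_3$ and $C_3$). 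Hence the regular nilpotent $e=\sum_\beta e_\beta$ of $\Lie(L)$ satisfies $d(\psi^2)(e)=\eta^2 e=\zeta e$ with $\zeta=\eta^2$ a primitive cube root of unity, so $e\in\Lie(L)(1)$ and $\theta|_L$ is N-regular.

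Next I would pin down the rank and produce the lower bound. The eigenvalues of $\psi$ on $\Lie(T\cap L)$ are $\eta^{1},\eta^{3},\eta^{5}$ (the exponents of $B_3$ and $C_3$ being $1,3,5$), so $\psi^2$ has a one-dimensional $\zeta$-eigenspace and $\theta|_L$ has rank $1$, matching $\rank\theta=1$ from Corollary \ref{order3}. A Cartan subspace of $\theta|_L$ is commutative, consists of elements semisimple in $\Lie(L)$ and hence in ${\mathfrak g}$, and therefore lies in a Cartan subspace of $\theta$; comparing dimensions it \emph{equals} one, so after conjugating (Lemma \ref{stabtori}(a)) I may take ${\mathfrak c}\subset\Lie(L)$. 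Since $\theta|_L$ is N-regular and inner, Lemma \ref{Nreg} shows the generators of $k[{\mathfrak c}]$ invariant under the little Weyl group $W_{\mathfrak c}^L$ of $\theta|_L$ have degrees equal to the invariant degrees of $L$ divisible by $m=3$; as those degrees are $2,4,6$, only $6$ survives, and because $\dim{\mathfrak c}=1$ we get $W_{\mathfrak c}^L=\mu_6$. As $W_{\mathfrak c}^L$ is naturally a subgroup of $W_{\mathfrak c}$, this yields $\mu_6\subseteq W_{\mathfrak c}\subseteq\bar W$.

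For the reverse inclusion I would apply Lemma \ref{ord}: with $\Phi_1$ the subsystem generated by the roots of the admissible diagram and $\Phi_2$ its orthogonal complement, the order of $\bar W$ divides $\#(Z_W(w))/\#(W(\Phi_2))$. A direct computation in $F_4$-coordinates shows that the roots orthogonal to a long (resp. short) $A_2$-system form a short (resp. long) $A_2$-system, so $\#(W(\Phi_2))=6$; since $\#(Z_W(w))=36$ for both classes by \cite[Table 8]{carter}, the order of $\bar W$ divides $6$. Combining the two bounds gives $W_{\mathfrak c}=\bar W=\mu_6$, and as this coincides with $W_{\mathfrak c}^L=N_{L(0)}({\mathfrak c})/Z_{L(0)}({\mathfrak c})$, every element of $W_{\mathfrak c}$ has a representative in $L(0)$. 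The main obstacle is the bookkeeping in the first step: one must check that the conjugation supplied by Corollary \ref{order3} genuinely carries $\theta|_L$ to the \emph{square of a Coxeter automorphism} of $L$, and not merely to some order-$3$ automorphism of the correct type, since it is exactly this identification that simultaneously furnishes the N-regular nilpotent and the correct invariant degree $6$.
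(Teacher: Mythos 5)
Your proposal is correct and follows essentially the same route as the paper: its one-line proof ("$\theta$ is the square of an automorphism corresponding to a Weyl group element of type $B_3$ (resp.\ $C_3$)") is exactly your core step, and your supporting details (Coxeter squares on the $B_3$/$C_3$ Levi from Lemmas \ref{order6} and \ref{order3}, the degree count via Lemma \ref{Nreg}, and the divisibility bound $\#(Z_W(w))/\#(W(\Phi_2))=36/6$ from Lemma \ref{ord}) are precisely the machinery the paper has already set up and invokes implicitly. Your verifications, including that the roots orthogonal to a long (resp.\ short) $A_2$ in $F_4$ form a short (resp.\ long) $A_2$, check out.
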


\begin{proof}
This is immediate since if $w$ is of type $A_2$ (resp. $\tilde{A_2}$) then $\theta$ is the square of an automorphism corresponding to a Weyl group element of type $B_3$ (resp. $C_3$).
\end{proof}

\begin{lemma}\label{dweyl}
Let $\theta$ be an automorphism of type $D_4^{(3)}$ with Kac diagram $010$ or $100$.
Then $W_{\mathfrak c}=\bar{W}=\mu_2$.
\end{lemma}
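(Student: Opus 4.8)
The plan is to transport everything to the type $F_4$ computation already carried out. By the classification of positive-rank type $D_4^{(3)}$ automorphisms obtained above, the diagram $010$ (resp. $100$) is realised as $\theta=\hat\theta|_G$, where $G=\Spin(8)$ sits inside a group $\hat G$ of type $F_4$ as the subgroup generated by $T$ and the long root subgroups, $\hat\theta=\Int n_w$, and $w=n_wT\in W(F_4)$ is of type $C_3$ (resp. $\tilde A_2$). The Cartan subspace is the common line ${\mathfrak c}=\{t\in{\mathfrak t}:w(t)=\zeta t\}$, so both $W_{\mathfrak c}$ and $\bar W$ are cyclic subgroups of $\GL({\mathfrak c})=k^\times$. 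Writing $\phi$ for the action on ${\mathfrak c}$, the inclusion $G(0)\subseteq(\hat G^{\hat\theta})^\circ$ gives $W_{\mathfrak c}\subseteq\phi(Z_{W(F_4)}(w))$, and the latter is the $F_4$ little Weyl group, equal to $\mu_6$ by Lemma \ref{lastf4} (type $\tilde A_2$) and by the lemma establishing $W_{\mathfrak c}=\mu_6$ for $w$ of type $C_3$. Since $\theta$ acts on $W(D_4)$ by conjugation by $w$, the same restriction identifies $\bar W$ with $\phi\bigl(Z_{W(D_4)}(w)\bigr)\subseteq\mu_6$.

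First I would compute $\bar W$. Recall $W(D_4)=W_l$ is normal of index $6$ in $W(F_4)$ with quotient $S_3$; let $\pi\colon W(F_4)\to S_3$ be the projection, so that $\pi(w)$ is a $3$-cycle ($w$ having order $3$ modulo $W(D_4)$). For the lower bound, the longest element $-1$ lies in $W(D_4)$ (as $D_4$ has even rank), is central in $W(F_4)$, hence lies in $Z_{W(D_4)}(w)$ and acts as $-1$ on ${\mathfrak c}$; thus $\mu_2\subseteq\bar W$. For the upper bound I would show $\bar W$ has no element of order $3$. By Lemma \ref{w0prep} and Lemma \ref{ord}, together with $\#Z_{W(F_4)}(w)$ from \cite{carter}, the kernel of $\phi$ on $Z_{W(F_4)}(w)$ is exactly $W(\Phi_2)$; hence any $n\in Z_{W(D_4)}(w)$ with $\phi(n)$ of order $3$ would lie in a coset $w^{\pm k}W(\Phi_2)$ with $3\nmid k$. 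But $\pi(w^{\pm k})$ is a $3$-cycle while $\pi(W(\Phi_2))$ contains no $3$-cycle (it is trivial in type $\tilde A_2$, where $\Phi_2$ consists of long roots, and of order $\le 2$ in type $C_3$), so $\pi(n)\neq 1$, contradicting $n\in\ker\pi=W(D_4)$. As the only subgroups of $\mu_6$ containing $\mu_2$ are $\mu_2$ and $\mu_6$ itself, this forces $\bar W=\mu_2$.

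It remains to prove $W_{\mathfrak c}=\bar W$; since $W_{\mathfrak c}\subseteq\bar W=\mu_2$, this is exactly the assertion $W_{\mathfrak c}\neq 1$, i.e. that the involution of $\bar W$ is realised inside $G(0)$. As $G=\Spin(8)$ is simply connected, I would deduce this from Lemma \ref{criterion}, checking its hypothesis $\{t\in T(0):t^m=1\}\subseteq T'_m=(1-\theta)(T)$ by an explicit computation with the action of $w$ on the cocharacter lattice $Y(T)$. As an independent confirmation in the order-$3$ case $100=\gamma$, one has $G(0)=G_2$ acting on the $7$-dimensional module ${\mathfrak g}(1)$; this module carries no invariant linear form, so under the Chevalley isomorphism $k[{\mathfrak g}(1)]^{G(0)}\cong k[{\mathfrak c}]^{W_{\mathfrak c}}$ of Lemma \ref{stabtori}(c) the single basic invariant has degree $2$, whence $W_{\mathfrak c}=\mu_2$.

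I expect the genuinely delicate step to be the last one, $W_{\mathfrak c}=\bar W$: producing an element of $G(0)$ that inverts ${\mathfrak c}$ is precisely the content of the torsion criterion in Lemma \ref{criterion}, and verifying its hypothesis requires a concrete calculation inside the simply connected torus of $\Spin(8)$. The centralizer bookkeeping in the first step, by contrast, is routine once $Z_{W(F_4)}(w)$ and $\ker\phi=W(\Phi_2)$ are read off from Carter's tables and the triality quotient $W(F_4)/W(D_4)\cong S_3$ is used to separate $W(D_4)$ from the cosets carrying the order-$3$ eigenvalues on ${\mathfrak c}$.
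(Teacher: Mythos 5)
Your computation of $\bar{W}$ is correct, and in fact slightly more self-contained than the paper's: the paper deduces $\bar{W}\cong\mu_2$ from the index-$3$ counting $[Z_{W(F_4)}(w):Z_{W(D_4)}(w)]=3$ established in the proof of Lemma \ref{maxrankweyl}, whereas you get the lower bound from $-1\in W(D_4)$ (central, acting as $-1$ on ${\mathfrak c}$) and the upper bound by excluding order-$3$ elements via the quotient $W(F_4)/W(D_4)\cong S_3$; both arguments rest on the same inputs (Carter's centralizer orders, $\ker\phi=W(\Phi_2)$ from Lemmas \ref{w0prep} and \ref{ord} plus the $F_4$ result $\bar{W}_{F_4}=\mu_6$), and both are fine. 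Your invariant-theoretic confirmation for the diagram $100$ is also a complete and correct alternative argument for that case: ${\mathfrak g}(1)$ is the irreducible $7$-dimensional $G_2$-module (irreducible since $p>3$), it carries an invariant quadratic form but no invariant linear form, and since restriction $k[{\mathfrak g}(1)]^{G(0)}\rightarrow k[{\mathfrak c}]^{W_{\mathfrak c}}$ is an isomorphism (Lemma \ref{stabtori}(c)) with $\dim{\mathfrak c}=1$, the generator degree is $2$, i.e. $W_{\mathfrak c}=\mu_2$.

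The genuine gap is the case of the diagram $010$ (order $6$, $w$ of type $C_3$), where the crucial step $W_{\mathfrak c}=\bar{W}$ --- producing an element of $G(0)$ acting as $-1$ on ${\mathfrak c}$ --- is only announced, not proved: you say you ``would'' verify the hypothesis $\{t\in T(0)\,|\,t^m=1\}\subseteq T'_m$ of Lemma \ref{criterion} by a cocharacter-lattice computation in $\Spin(8)$, but that computation is exactly the nontrivial content of the lemma and is nowhere carried out. Note also that Lemma \ref{criterion} is only a sufficient condition, so if the lattice check failed you would be left with no conclusion at all; as it stands the order-$6$ case is unproven. The paper sidesteps this entirely by a structural reduction: $(T^w)^\circ=\beta^\vee(k^\times)$ for the long root $\beta=\hat\alpha$, the group $L=Z_G(\beta^\vee(k^\times))'$ is of type $A_1\times A_1\times A_1$ with the three factors permuted cyclically by $w$, one checks ${\mathfrak c}\subset\Lie(L)$ and that $\theta|_L$ is the triality shift $\tau:(g_1,g_2,g_3)\mapsto({}^tg_3^{-1},g_1,g_2)$ (resp. $\tau^2$ for $100$), and then $N_{L(0)}({\mathfrak c})/Z_{L(0)}({\mathfrak c})\cong\mu_2$ is verified directly inside $\SL(2,k)^3$; since $L(0)\subseteq G(0)$ this realizes the involution. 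This reduction is moreover not incidental bookkeeping: the proof of Theorem \ref{main} reuses precisely this subgroup $L$ and the N-regularity of $\theta|_L$ to build the KW-section, so even a completed version of your criterion-based route would leave that later argument needing a replacement.
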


\begin{proof}
We noted in the proof of Lemma \ref{maxrankweyl} that if $w\in W=W(F_4)$ has order 3 modulo $W_l=W(D_4)$ then $Z_{W_l}(w)$ has index 3 in $Z_W(w)$.
Thus in both cases here the group $\bar{W}$ of Lemma \ref{w1} is isomorphic to $\mu_2$.
It therefore remains only to prove that the non-trivial element of $\bar{W}$ has a representative in $G(0)$.

Let $w$ be an element of $W$ of type $C_3$.
Then $(T^w)^\circ$ is of dimension 1 and is generated by the coroot of a long root element $\beta$.
The idea here is that we can centralize by $\beta^\vee(k^\times)$ and obtain a Levi subgroup of $G$ whose Lie algebra contains ${\mathfrak c}$, and which has a little Weyl group isomorphic to $\mu_2$.
It is easy to see that $Z_G(\beta^\vee(k^\times))$ is of type $A_1\times A_1\times A_1$.
Moreover, $w$ and $w^2$ permute the 3 subgroups of type $A_1$ transitively.
(If, for example, we take $w=s_2s_3s_4$ in $W(F_4)$, then $\Phi_l$ has basis $\{\alpha_2,\alpha_1,\alpha_2+2\alpha_3,\alpha_2+2\alpha_3+2\alpha_4\}$; $\beta=\hat\alpha$, the longest root in $F_4$, and hence the roots in $\Phi_l$ which are orthogonal to $\beta$ are $\pm\alpha_2$, $\pm(\alpha_2+2\alpha_3)$ and $\pm(\alpha_2+2\alpha_3+2\alpha_4)$; furthermore, $w(\alpha_2)=\alpha_2+2\alpha_3$, $w(\alpha_2+2\alpha_3)=\alpha_2+2\alpha_3+2\alpha_4$ and $w(\alpha_2+2\alpha_3+2\alpha_4)=-\alpha_2$.)
%Moreover, if $w$ is an element of $W(F_4)$ of type $C_3$ then $(T^w)^\circ$ is generated by the coroot of a long root element $\beta$, $Z_G(\beta^\vee(k^\times))$ is of type $A_1\times A_1\times A_1$ and $w$ and $w^2$ permute the 3 subgroups of type $A_1$ transitively.
Setting $L=Z_G(\beta^\vee(k^\times))'$, it is thus easy to see that ${\mathfrak c}\subset\Lie(L)$ and that $N_{L(0)}({\mathfrak c})/Z_{L(0)}({\mathfrak c})\cong\mu_2$.
(Here the Kac automorphism with diagram $010$ restricts to an automorphism of $L$ of the form $(g_1,g_2,g_3)\mapsto ({^t}g_3^{-1},g_1,g_2)$; the Kac automorphism with diagram $100$ is conjugate to the square of the Kac automorphism with diagram $010$.)
Thus $\bar{W}=W_{\mathfrak c}=\mu_2$ in either case.
\end{proof}

\begin{rk}\label{finalremark}
(a) In the case where the ground field has characteristic zero and $G(0)$ is semisimple (arbitrary $G$), the rank and little Weyl group were determined by Vinberg in \cite{vin}.
It was shown in \cite[Prop. 18]{vin} that $G(0)$ is semisimple if and only if the corresponding Kac diagram has exactly one non-zero entry, which is equal to 1.
Our calculations for $W_{\mathfrak c}$ agree with \cite{vin} in these cases.

(b) Let $G_Z^\theta=\{Êg\in G\, |\, g^{-1}\theta(g)\in Z(G)\}$ and let $W^Z_{\mathfrak c}=N_{G_Z^\theta}({\mathfrak c})/Z_{G_Z^\theta}({\mathfrak c})$.
Then $W^Z_{\mathfrak c}$ is a subgroup of $\bar{W}$.
Recall that $\theta$ is {\it saturated} if $W_{\mathfrak c}=W^Z_{\mathfrak c}$.
It is immediate that all automorphisms in type $F_4$ and $G_2$ are saturated since in both cases the centre is trivial.
Moreover, it is not difficult to show without using our classification that any automorphism of type $\tilde{D}_4^{(3)}$ is saturated.

(c) There is a strong relationship between Vinberg's construction of $W_{\mathfrak c}$ and work of Brou\'e and Malle constructing certain pseudo-reflection groups in finite groups of (exceptional) Lie type \cite{broue-malle}.
In general, the group constructed by Brou\'e and Malle corresponds to our $\bar{W}$.
It is possible for $W_{\mathfrak c}$ to be a proper subgroup of $\bar{W}$.
For example, if $\theta=\Int n_w$ in type $E_6$, where $w$ is an element of type $D_4(a_1)$, then $W_{\mathfrak c}$ is either $G_8$ or $G(4,1,2)$.

(d) In Table 3 we have indicated the action of $\theta|_L$ for the cases above using the automorphism $\tau:\SL(2,k)^3\rightarrow\SL(2,k)^3$, $(g_1,g_2,g_3)\mapsto ({^t}(g_3)^{-1},g_1,g_2)$.
\end{rk}

\begin{theorem}\label{main}
Any $\theta$-group of type $G_2$, $F_4$ or $D_4^{(3)}$ has a KW-section.
\end{theorem}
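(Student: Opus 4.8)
The plan is to reduce every positive rank case to Panyushev's N-regular criterion, which is available here as Lemma~\ref{Nreg}. First I would dispose of the trivial case: if $\theta$ has rank zero then $k[{\mathfrak g}(1)]^{G(0)}=k$ and ${\mathfrak v}=\{0\}$ is a KW-section, so by Lemma~\ref{stabtori}(a) it suffices to treat the positive rank automorphisms classified in Sections~4 and 5. The engine of the argument is the following restriction principle: if $L$ is a $\theta$-stable reductive subgroup of $G$ with ${\mathfrak c}\subset\Lie(L)$ and such that the little Weyl group $W^L_{\mathfrak c}=N_{L(0)}({\mathfrak c})/Z_{L(0)}({\mathfrak c})$ of $\theta|_L$ is equal to $W_{\mathfrak c}$ (and not merely contained in it), then any KW-section ${\mathfrak v}\subset{\mathfrak l}(1)$ for $\theta|_L$ is automatically a KW-section for $\theta$.

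This principle is immediate from the Chevalley restriction theorem (Lemma~\ref{stabtori}(c)). Since ${\mathfrak c}\subset\Lie(L)$ and ${\mathfrak c}$ is a Cartan subspace of ${\mathfrak l}(1)$ (its maximality in ${\mathfrak l}(1)$ being clear, as any commutative subspace of ${\mathfrak l}(1)$ consisting of semisimple elements is also one in ${\mathfrak g}(1)$), restriction along ${\mathfrak c}\hookrightarrow{\mathfrak l}(1)\hookrightarrow{\mathfrak g}(1)$ identifies both $k[{\mathfrak g}(1)]^{G(0)}$ and $k[{\mathfrak l}(1)]^{L(0)}$ with $k[{\mathfrak c}]^{W_{\mathfrak c}}$ compatibly. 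Hence the restriction map $k[{\mathfrak g}(1)]^{G(0)}\to k[{\mathfrak l}(1)]^{L(0)}$ is an isomorphism, and composing it with the isomorphism $k[{\mathfrak l}(1)]^{L(0)}\to k[{\mathfrak v}]$ furnished by the KW-section for $\theta|_L$ shows that restriction to ${\mathfrak v}$ induces an isomorphism on $k[{\mathfrak g}(1)]^{G(0)}$.

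With this in hand I would run through the classification. The maximal rank automorphisms that are powers of a Coxeter element are N-regular by Lemma~\ref{coxpowers}, so Lemma~\ref{Nreg} applies with $L=G$; this covers all of type $G_2$ and the classes $F_4$, $F_4(a_1)$, $D_4(a_1)$, $A_2\times\tilde{A}_2$, $A_1^4$ in type $F_4$. For type $D_4^{(3)}$ the three maximal rank classes $111,101,001$ are also N-regular with $L=G$: writing $\theta_{111}$ in the explicit triality-twisted form of Section~4, the regular nilpotent $e=e_{\alpha_1}+e_{\alpha_2}+e_{\alpha_3}+e_{\alpha_4}$ of $\mathfrak{so}(8,k)$ satisfies $\theta_{111}(e)=\zeta e$, so $e$ also lies in ${\mathfrak g}(1)$ for $\theta_{111}^2$ and $\theta_{111}^4$, whence all three gradings are N-regular. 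For the remaining classes I would invoke the restriction principle: the class $B_4$ in $F_4$ restricts to a Coxeter (hence N-regular) automorphism of the $\theta$-stable $B_4$-subsystem subgroup $L$, with $W^L_{\mathfrak c}=\mu_8=W_{\mathfrak c}$; the classes $C_3$, $B_3$, $B_2$ and $\tilde{A}_1$ restrict to Coxeter automorphisms of the Levi subgroups exhibited in Lemma~\ref{order6} and the corresponding order~4 and order~2 constructions, with $W^L_{\mathfrak c}$ equal to $\mu_6$, $\mu_6$, $\mu_4$, $\mu_2$ respectively; the classes $A_2$, $\tilde{A}_2$ in $F_4$ are handled by the subgroups of type $B_3$, $C_3$ of Lemma~\ref{lastf4}, on which $\theta$ is N-regular; and the classes $010$, $100$ in $D_4^{(3)}$ are handled by the subgroup $L$ of type $A_1^3$ of Lemma~\ref{dweyl}, on which $\theta|_L$ is the N-regular automorphism $(g_1,g_2,g_3)\mapsto({}^{t}g_3^{-1},g_1,g_2)$ with $W^L_{\mathfrak c}=\mu_2=W_{\mathfrak c}$.

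The hard part will be the case-by-case verification underlying the two hypotheses of the restriction principle, rather than the principle itself. The delicate point is the equality $W^L_{\mathfrak c}=W_{\mathfrak c}$: only this (as opposed to mere containment $W^L_{\mathfrak c}\subseteq W_{\mathfrak c}$) guarantees that no invariants are lost on passing from ${\mathfrak g}(1)$ to ${\mathfrak l}(1)$, and establishing it is precisely what the little Weyl group computations of Section~5 are for. The N-regularity of each restriction $\theta|_L$ is the secondary obstacle, reduced in every instance to recognizing $\theta|_L$ as a Coxeter automorphism of a classical or small exceptional subgroup, or as the triality-twisted automorphism of $\SL(2,k)^3$, where a regular nilpotent of ${\mathfrak l}$ lying in ${\mathfrak l}(1)$ can be written down explicitly.
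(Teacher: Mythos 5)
Your proposal is correct and takes essentially the same approach as the paper: the paper's proof of Theorem \ref{main} likewise exhibits, for each positive rank class, a $\theta$-stable reductive subgroup $L$ (the subsystem group $L_1$ in most cases, and the subgroups of type $B_3$, $C_3$ or $\SL(2,k)^3$ from Lemmas \ref{lastf4} and \ref{dweyl} in the remaining ones) with ${\mathfrak c}\subset\Lie(L)$, $N_{L(0)}({\mathfrak c})/Z_{L(0)}({\mathfrak c})=W_{\mathfrak c}$ and $\theta|_L$ N-regular, and then invokes Lemma \ref{Nreg}. The restriction principle you spell out via the Chevalley restriction theorem, and your explicit verification that the maximal rank $D_4^{(3)}$ gradings are N-regular, only make explicit what the paper compresses into the words ``then we can apply Lemma \ref{Nreg}.''
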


\begin{proof}
To prove this we will observe that there exists a $\theta$-stable reductive subgroup $L$ of $G$ such that ${\mathfrak c}\subset\Lie(L)$, $N_{L(0)}({\mathfrak c})/Z_{L(0)}({\mathfrak c})=W_{\mathfrak c}$ and $\theta|_L$ is N-regular.
Then we can apply Lemma \ref{Nreg}.
Indeed, $L$ is simply the group $L_1$ (see the discussion before Lemma \ref{orthog}) in all cases except automorphisms of type $A_2$ or $\tilde{A_2}$ in type $F_4$ or those of type $C_3$ or $\tilde{A_2}$ in type $D_4^{(3)}$.
In case $A_2$ (resp. $\tilde{A_2}$) in type $F_4$ we can reduce to a group of type $B_3$ (resp. $C_3$) by (the proof of) Lemma \ref{lastf4}.
In cases $C_3$ and $\tilde{A_2}$ in type $D_4^{(3)}$ we can reduce to a subgroup of $G$ isomorphic to $\SL(2,k)^3$ as indicated in the proof of Lemma \ref{dweyl}.
\end{proof}

This establishes Popov's conjecture in all types other than type $E$.
It is possible to use similar methods to solve the remaining cases, but the calculations required would make the task very time-consuming.
An alternative approach to the problem (for inner automorphisms) is to consider representatives $n_w$ of (suitable) elements $w\in W$, and to use computational methods to determine the Kac automorphism corresponding to $\Int n_w$ in each case.
We will return to this question in future work with Benedict Gross, Mark Reeder and Jiu-Kang Yu.

\begin{table}
\begin{center}
\caption{Positive rank automorphisms in type $G_2$}
\vline
\begin{tabular}{cccccc}
\hline
Kac diagram & $m$ & $w$ & $r$ & $W_{\mathfrak c}$ & $L$ \\
\hline
111 & 6 & $G_2$ & 1 & $\mu_6$ & N-reg. \\
011 & 3 & $A_2$ & 1 & $\mu_6$ & N-reg. \\
010 & 2 & $A_1\times\tilde{A_1}$ & 2 & $W(G_2)$ & N-reg. \\
\hline
\end{tabular}\vline
\end{center}
\end{table}\nopagebreak

\begin{table}
\begin{center}
\caption{Positive rank automorphisms in type $F_4$}
\vline
\begin{tabular}{ccccccc}
\hline
Kac diagram & $m$ & $w$ & $r$ & $W_{\mathfrak c}$ & $L$ & $\theta|_L$ \\
\hline
11111 & 12 & $F_4$ & 1 & $\mu_{12}$ & N-reg. &  \\
11101 & 8 & $B_4$ & 1 & $\mu_8$ & $\Spin(9)$ & Coxeter \\
10101 & 6 & $F_4(a_1)$ & 2 & $G_5$ & N-reg. & \\
01010 & 6 & $C_3$ & 1 & $\mu_6$ & $\Sp(6)$ & Coxeter \\
11100 & 6 & $B_3$ & 1 & $\mu_6$ & $\Spin(7)$ & Coxeter \\
10100 & 4 & $D_4(a_1)$ & 2 & $G_8$ & N-reg. & \\
01001 & 4 & $B_2$ & 1 & $\mu_4$ & $\Spin(5)$ & Coxeter \\
00100 & 3 & $A_2\times \tilde{A_2}$ & 2 & $G_5$ & N-reg. & \\
11000 & 3 & $A_2$ & 1 & $\mu_6$ & $\Spin(7)$ & positive 3-cycle \\
10001 & 3 & $\tilde{A_2}$ & 1 & $\mu_6$ & $\Sp(6)$ & positive 3-cycle \\
01000 & 2 & $A_1^4$ & 4 & $W(F_4)$ & N-reg. & \\
00001 & 2 & $\tilde{A_1}$ & 1 & $\mu_2$ & short $\SL(2)$ & Coxeter\\
\hline
\end{tabular}\vline
\end{center}
\end{table}\nopagebreak

\begin{table}
\begin{center}
\caption{Positive rank automorphisms in type $D_4^{(3)}$}
\vline
\begin{tabular}{ccccccc}
\hline
Kac diagram & $m$ & $w$ & $r$ & $W_{\mathfrak c}$ & $L$ & $\theta|_L$ \\
\hline
111 & 12 & $F_4$ & 1 & $\mu_{12}$ & N-reg. & \\
101 & 6 & $F_4(a_1)$ & 2 & $G_4$ & N-reg. & \\
010 & 6 & $C_3$ & 1 & $\mu_2$ & $\SL(2)^3$ & $\tau$ \\
001 & 3 & $A_2\times\tilde{A_2}$ & 2 & $G_4$ & N-reg. & \\
100 & 3 & $\tilde{A_2}$ & 1 & $\mu_2$ & $\SL(2)^3$ & $\tau^2$ \\
\hline
\end{tabular}\vline
\end{center}
\end{table}
%\nopagebreak

\bibliography{biblio}

\end{document}